\def\c{C}
\def\ck{C_k}
\newtheorem{ipotesi}{Assumption}[section]     
\def\argmin{\mathop{\rm argmin }}        
\newcommand{\eqdef}{\stackrel{\rm def}{=}}
\newcommand{\beqn}[1]{\begin{equation}\label{#1}}
\newcommand{\eeqn}{\end{equation}}
\newcommand{\req}[1]{(\ref{#1})}
\newcounter{algo}[section]
\renewcommand{\thealgo}{\thesection.\arabic{algo}}
\newcommand{\algo}[3]{\refstepcounter{algo}
\begin{center}
\framebox[\textwidth]{
\parbox{0.95\textwidth} {\vspace{\topsep}
{\bf Algorithm \thealgo : #2}\label{#1}\\
\vspace*{-\topsep} \mbox{ }\\
{#3} \vspace{\topsep} }}
\end{center}}
\renewcommand{\Re}{\hbox{I\hskip -2pt R}}
\newcommand{\comment}[1]{}
\newcommand{\R}{\mathbb{R}}
\newcommand{\metodop}{ARC\_{\sc{SO}}}
\newcommand{\metodo}{ARC\_{\sc{SO}} }
\begin{document}
\title{Adaptive cubic regularization methods with dynamic 
inexact Hessian information and applications to finite-sum minimization}\thanks{Work partially supported by INdAM-GNCS under Progetti di Ricerca 2018.}

\author{Stefania Bellavia\footnotemark[2], Gianmarco Gurioli\footnotemark[3] and
Benedetta Morini\footnotemark[2]}

\renewcommand{\thefootnote}{\fnsymbol{footnote}}
\footnotetext[2]{Dipartimento  di Ingegneria Industriale, Universit\`a di Firenze,
viale G.B. Morgagni 40,  50134 Firenze,  Italia,
stefania.bellavia@unifi.it, benedetta.morini@unifi.it. Members of the INdAM Research Group GNCS.}

\renewcommand{\thefootnote}{\fnsymbol{footnote}}
\footnotetext[3]{Dipartimento  di Matematica e Informatica ``Ulisse Dini'', Universit\`a di Firenze,
viale G.B. Morgagni 67a,  50134 Firenze,  Italia,
gianmarco.gurioli@unifi.it. Member of the INdAM Research Group GNCS.}

\maketitle{}

\begin{abstract}
We consider the Adaptive Regularization with Cubics approach  for solving nonconvex optimization problems  
and propose a new variant based on inexact Hessian information chosen dynamically. 
The theoretical analysis of the proposed procedure is given. 
The key property of  ARC framework, constituted by optimal worst-case 
function/derivative  evaluation  bounds for first- and second-order critical point, is guaranteed.
Application to large-scale finite-sum minimization based on  subsampled Hessian is discussed and analyzed in both a deterministic  and probabilistic manner
and equipped with numerical experiments on synthetic and real datasets.
\end{abstract}
\begin{keywords}
Adaptive regularization  with cubics; nonconvex optimization; worst-case analysis, finite-sum optimization.
\end{keywords}

\section{Introduction}
Numerical methods based on the Adaptive Regularization with Cubics (ARC)  constitute an important 
class of Newton-type procedures for the solution of the
unconstrained, possibly nonconvex, optimization  problem
\begin{equation} 
\label{Pb}
\min_{x\in\mathbb{R}^n} f(x),
\end{equation}
\noindent
where $f:\mathbb{R}^n\rightarrow \mathbb{R}$ is smooth and bounded below.  Successively to the seminal works 
\cite{ARC1, ARC2}, ARC methods have become a very active area of research
due to their worst-case iteration and computational complexity bounds for achieving a desired level of accuracy 
in  first-order and second-order optimality conditions. Under reasonable assumptions on  $f$  and a
suitable realization of the adaptive cubic regularization method with derivatives of $f$ up to order 2,
Cartis et al. proved that an $(\epsilon,\epsilon_H)$ approximate first- and second-order critical point is found in at most 
$O(\max(\epsilon^{-3/2}, \epsilon_H^{-3}))$ iterations,
where $\epsilon$ and $\epsilon_H$ are positive  prefixed  first-order and second-order optimality tolerances, respectively
\cite{Toint1, ARC2, CGToint,CGTIMA};
this complexity result  is known to be sharp and optimal with respect to steepest descent, Newton's method and
Newton's method embedded into a linesearch or a trust-region strategy \cite{CGT, CGToint}.

Of particular practical interest is the ARC algorithm where exact second-derivatives of $f$ 
 are not required \cite{ARC1}.
Inexact Hessian information is used and suitable approximations of the Hessian make 
the algorithm  convenient for problems where the  evaluation of second-derivatives is expensive. Clearly,  the  
agreement between the Hessian and its approximation characterizes complexity and convergence rate behaviour of the procedure; 
in \cite{ARC1, ARC2}  the well-known Dennis-Mor\'e condition \cite{dm}  and slightly stronger agreements are considered.

Recently, Newton-type methods with  inexact Hessian information, and possibly inexact function and gradient information,
have received large attention see e.g., \cite{bmgt, bkkj,bkm, bbn, review,  byrd, CS, Cin, kl, pilanci, Roosta_in, Roosta,   Roosta_2p, Roosta_inexact}.  
The interest in  such methods is motivated
by problems where the derivative information about $f$ is computationally expensive,  such as 
large-scale optimization problems arising in machine learning and data analysis modeled as
\begin{equation}\label{finite_sum}
\min_{x\in\mathbb{R}^n} f(x)=\frac{1}{N}\sum_{i=1}^N{\phi_i(x)},
\end{equation}
with $N$ being a positive scalar and $\phi_i:\mathbb{R}^n\rightarrow \mathbb{R}$. 
Experimental studies have shown that second-order methods can be more efficient on badly-scaled or ill-conditioned problems 
than first-order methods even though inexact Hessian information is built via random  sampling methods, see e.g., \cite{bbn,
 review, Cin, kl,  Roosta_2p, Roosta_inexact}. In addition, these methods can take advantage of 
second-order  information to escape from saddle points \cite{CGToint,Roosta_2p}. 
ARC methods with  probabilistic  models have been proposed and studied in 
\cite{CS, Cin, kl,  Roosta, Roosta_2p, Roosta_inexact}, while a  cubic regularized method incorporating variance reduction techniques has been given in \cite{zhou_xu_gu}; much effort has been devoted to  weaken  the request on the level
of resemblance between the Hessian and its approximation though preserving optimal complexity bounds.

This work focuses on a variant of the ARC methods for problem (\ref{Pb})
with inexact Hessian information and presents a strategy
for choosing the Hessian approximation dynamically.  
We propose a rule for fixing the desired accuracy in the Hessian approximation and incorporate it 
into the ARC framework; the agreement between  the Hessian of $f$ and its approximation can be loose at the beginning of the iterative
process   and  
increases progressively as the norm of stepsize  drops below one and a stationary point for (\ref{Pb}) is approached. 
The  resulting ARC variant employs  a potentially milder accuracy requirement on 
the Hessian approximation  than  the proposals in  \cite{Cin, Roosta}, without impairing optimal complexity results. 
The new algorithm is analyzed theoretically and first- and second-order optimal complexity bounds are proved
in a deterministic manner; in particular, we show that  the complexity bounds  and convergence properties of our scheme match those of the ARC 
methods mentioned above. 
 Our proposal  has been motivated by the pervasiveness of  finite-sum minimization
problems  \eqref{finite_sum}   and the significant interest in unconstrained optimization methods with inexact Hessian information. 
Therefore, we discuss the application of our method to this relevant class of problems and show that it is  compatible with  
subsampled Hessian approximations adopted in literature; in this context, we give probabilistic and deterministic results as well as numerical results on a set of nonconvex binary classification problems.

The paper is organized as follows. In Section \ref{ARCbase} we briefly review the ARC framework, 
then in Section \ref{newalgo} we introduce our variant based on a dynamic rule for building the inexact Hessian.
The first-order iteration complexity  bound of the resulting algorithm is studied in Section \ref{complexity} along with 
the asymptotic behaviour of the generated sequence; complexity  bounds and convergence to second-order points are
analyzed in Section \ref{2ndo}. The application of our algorithm to the finite-sum optimization problem is discussed 
in Section \ref{fs}, while the relevant differences of our proposal from  the closely related works in the literature 
are discussed in Section \ref{cfr}. Finally, in Section \ref{numerical} we provide numerical results showing the effectiveness of our adaptive rule.

\vskip 5pt 
{\bf Notations.} The Euclidean vector and matrix norm is denoted as $\|\cdot \|$.
Given the scalar or vector or matrix $v$, and the non-negative scalar $\chi$, 
we write $v=O(\chi)$ if there is a constant $g$ such that $\|v\| \le  g \chi$.
Given any set ${\cal{S}}$, $|{\cal{S}}|$ denotes its cardinality.

\section{The adaptive regularization framework}\label{ARCbase} The ARC approach
for   unconstrained optimization, firstly proposed in \cite{g,np,w},  
is based on the use of a cubic model for $f$
and is a globally convergent second-order procedure.
If $f$ is smooth and the Hessian matrix $\nabla^2 f$ is globally Lipschitz continuous on $\mathbb{R}^n$ with 
$\ell_2$-norm Lipschitz constant $L$, i.e.,
$$
\|\nabla^2 f(x)-\nabla^2 f(y)\| \le L\| x-y\|,\quad \forall x,y\in\mathbb{R}^n,\quad \exists L>0,
$$
then  the Taylor's expansion of $f$  at $x_k\in\mathbb{R}^n$ 
with increment $s\in\mathbb{R}^n$ implies
\begin{equation}\label{mc}
\begin{split}
f(x_k+s)&\le
  f(x_k)+\nabla f(x_k)^T s +\frac{1}{2} s^T \nabla^2 f(x_k) s + \frac{L}{6} \| s \|^3 \eqdef m^C(x_k,s).
\end{split}
\end{equation}
Consequently, any step $s$ satisfying $ m^C(x_k,s)<  m^C(x_k,0)=f(x_k)$ provides a reduction of $f$ at $x_k+s$ 
with respect to the current value $f(x_k)$.

The ARC approach has received growing interest  starting from the papers by Cartis et al. \cite{ARC1,ARC2}  where  
it is not required the knowledge of either  exact second-derivatives of $f$ or the  Lipschitz constant $L$.
Specifically, the cubic model used at iteration $k$ has the form 
\begin{equation}\label{m}
m(x_k,s,\sigma_k)= f(x_k)+ \nabla f(x_k)^T s+\frac{1}{2} s^T B_k s+\frac{\sigma_k}{3}\|s\|^3\eqdef T_2(x_k,s)+\frac{\sigma_k}{3}\|s\|^3,
\end{equation}
where $B_k\in\mathbb{R}^{n\times n}$ is a symmetric approximation of $\nabla^2 f(x_k)$ and  $\sigma_k>0$ is 
the cubic regularization parameter  chosen adaptively to ensure the overestimation property as in (\ref{mc}).
The relevance of such procedure lies on its worst-case evaluation complexity for finding an $\epsilon$-approximate first-order critical point, 
i.e., a point $\widehat x$ such that
\begin{equation}\label{grad_eps}
\|\nabla f(\widehat x )\|\le \epsilon.
\end{equation}
In fact, in \cite{ARC2} worst-case iteration complexity of order  $O(\epsilon^{-3/2})$ is proved, 
provided that: (a) the step $s_k$ is the global minimizer of $m(x_k,s,\sigma_k)$
over a subspace of $\mathbb{R}^n$  including $\nabla f(x_k)$, see e.g. 
\cite{ blms, cd,ARC1}; (b)  the actual  objective decrease $f(x_k)-f(x_k+s_k)$   is a
prefixed fraction of the predicted model reduction $f(x_k)-m(x_k,s_k,\sigma_k)$, i.e., 
\begin{equation}\label{pi}
\pi_k=\frac{f(x_k)-f(x_k+s_k)}{f(x_k)-m(x_k,s_k,\sigma_k)}\ge \eta_1 ,
\end{equation}
for some $\eta_1\in (0,1)$; (c) the agreement between $\nabla^2 f(x_k)$ and  $B_k$ along $s_k$ is such that 
\begin{equation}\label{AM4}
\|(\nabla^2 f(x_k)-B_k)s_k\|\le \chi\|s_k\|^2,
\end{equation}
for all $k\ge 0$ and some constant $\chi>0$.


The requirement  \req{AM4} is stronger than the Dennis-Mor\'e condition \cite{dm} and it is unknown whether  it can be ensured theoretically  \cite{ARC2}. Kohler and Lucchi 
\cite{kl} suggested to achieve \req{AM4} 
by imposing
\begin{equation}\label{AMLK}
\| \nabla^2 f(x_k)-B_k\|\le \chi\|s_k\|.
\end{equation}
It is evident that the agreement between $\nabla^2 f(x_k)$ and $B_k$ depends on the steplength
which can be determined only after $B_k$ is formed. This issue is circumvented in practice employing the
steplength at the previous iteration \cite[\S 5]{kl}.

Xu et al. \cite{Roosta,Roosta_inexact}  analyzed ARC algorithm making  a major modification on the level of resemblance 
between $\nabla^2 f(x_k)$ and $B_k$ over (\ref{AM4}) requiring
\begin{equation}\label{AMXRM}
\|(\nabla^2 f(x_k)-B_k)s_k\|\le \mu \|s_k\|,
\end{equation}
with $\mu \in (0,1)$. 
In practice,  \req{AMXRM} is achieved imposing  $\| \nabla^2 f(x_k)-B_k \|\le \mu $.
In order to retain the optimal complexity of the classical ARC method, $\mu=O(\epsilon)$ is assumed.

We observe that, given  a positive $\upsilon$,   the requirement  $\| \nabla^2 f(x_k)-B_k \|\le \upsilon $ can be enforced 
approximating  $\nabla^2 f(x)$ by finite differences or  interpolating functions \cite{trbook}.  Moreover, for the 
class of large-scale finite-sum minimization (\ref{finite_sum}), the requirement  $\| \nabla^2 f(x_k)-B_k \|\le \upsilon $ can be 
satisfied   in probability via subsampling in Hessian computation, see e.g., \cite{bmgt, review, kl, Roosta}. 

A main advancement in ARC algorithm was obtained by Birgin et al. in the paper  \cite{Toint1} where 
ARC is generalized to higher order regularized models and  significant  modifications in the step computation and
acceptance criterion {are introduced with  respect to \cite{ARC1,ARC2}.
The Algorithm \ref{ARCbraz} detailed below is proposed in \cite{Toint1} and here restricted to the version based on second order model and cubic regularization;
as in \cite{Toint1} $B_k$ is supposed to be equal to $ \nabla^2 f(x_k)$. 
Remarkably, global optimization of $m(x_k,s,\sigma_k)$ over  a subspace of $\mathbb{R}^n$
is no longer required and  conditions (\ref{stepbraz1})--(\ref{stepbraz2}) on  the step $s_k$  are quite standard in unconstrained optimization 
when  a model is approximately minimized. A further distinguishing feature is that
the denominator in (\ref{rho}) involves the  second-order Taylor expansion 
of $f$ without the regularizing term, whereas the  denominator in (\ref{pi})  involves the cubic model $m(x_k,s,\sigma_k)$ itself.
Analogously to the algorithm in \cite{ARC2}, Algorithm  \ref{ARCbraz} finds an 
$\epsilon$-approximation first-order critical point in at most $O(\epsilon^{-3/2})$ 
evaluations of $f$ and its derivatives $\nabla f$, $\nabla^2 f$  (\cite{Toint1}).
\vskip 3pt
\algo{ARCbraz}{ARC algorithm  \cite{Toint1}}{
\textbf{Step \boldmath$0$: Initialization}. Given an initial point $x_0$, the initial regularizer $\sigma_0>0$,  the 
accuracy level $\epsilon$. Given  $\theta$, $\eta_1$, $\eta_2$, $\gamma_1$, $\gamma_2$, $\gamma_3$, $\sigma_{\textrm{min}}$ s.t.
\[
\theta>0, \quad \sigma_{\min}\in (0, \sigma_0], \quad  0<\eta_1\le \eta_2<1, \quad 0<\gamma_1<1<\gamma_2<\gamma_3.
\]
Compute $f(x_0)$ and set $k=0$.
\vskip 2pt
\textbf{Step \boldmath$1$: Test for termination}. Evaluate $\nabla f(x_k)$. If $\|\nabla f(x_k)\|\le \epsilon$, terminate with the
approximate  solution $\widehat x= x_k$. Otherwise, compute $B_k=\nabla^2 f(x_k)$.
\vskip 2pt
\textbf{Step \boldmath$2$: Step computation.} 
Compute the step $s_k$ by approximately minimizing the model $m(x_k,s,\sigma_k)$ w.r.t. $s$ so that
\begin{eqnarray}
& & m(x_k,s_k,\sigma_k)<m(x_k,0,\sigma_k),\label{stepbraz1}\\
& & \|\nabla_s m(x_k,s_k,\sigma_k)\|\le \theta \|s_k\|^2.\label{stepbraz2}
\end{eqnarray}
\vskip 2pt
\textbf{Step \boldmath$3$: Acceptance of the trial step.}  Compute $f(x_k+s_k)$ and define
\begin{equation}
\label{rho}
\rho_k=\frac{f(x_k)-f(x_k+s_k)}{T_2(x_k,0)-T_2(x_k,s_k)}.
\end{equation}
If $\rho_k\ge \eta_1$, then define $x_{k+1}=x_k+s_k$; otherwise define $x_{k+1}=x_k$.
\vskip 2pt
\textbf{Step \boldmath$4$: Regularization  parameters update.}  Set 
\begin{equation}
\label{sigma}
\sigma_{k+1}\in\left \{ \begin{array}{ll}
\left[\mbox{max}(\sigma_{\textrm{min}},\gamma_1\sigma_k),\sigma_k\right], & \mbox {if } \rho_k\ge \eta_2,\\
\left[\sigma_k,\gamma_2\sigma_k \right], & \mbox{if } \rho_k\in\left[ \eta_1,\eta_2\right),\\
\left[\gamma_2\sigma_k,\gamma_3\sigma_k\right],& \mbox{if } \rho_k< \eta_1.
\end{array}
\right .
\end{equation}
Set $k=k+1$ and go to Step $1$ if $\rho_k\ge \eta_1$, or to Step 2 otherwise.
\vskip 2pt
}
\vskip 15pt

In this work, we propose a variant of  Algorithm \ref{ARCbraz} employing a model of the form (\ref{m}) and a 
matrix $B_k$ such that
\begin{equation}\label{diffck}
\|\nabla^2 f(x_k)-B_k\|\le C_k ,
\end{equation}
for all $k\ge 0$ and   positive scalars $C_k$. 
The accuracy $C_k$  on   the inexact Hessian information is dynamically chosen and 
when the norm of the step is smaller than one  it depends on the current gradient's norm.  
We will show that for properly chosen scalars $C_k$, condition   \eqref{diffck} is an implementable rule to achieve (\ref{AM4}). 
In addition, in the first phase of the procedure 
the accuracy imposed on $B_k$ can be less stringent with  respect to the proposal  made in \cite{Cin, Roosta, Roosta_inexact},
though  preserving the complexity bound $O(\epsilon^{-3/2})$. 
In the subsequent  sections   we present and study our variant of the ARC algorithm. We refer  to  Sections \ref{fs}  and
\ref{cfr} for a discussion  on the application to the finite-sum optimization problem and the comparison with the above 
mentioned related works  in the literature.

\section{An adaptive choice of the inexact  Hessian}\label{newalgo}
In this section, we propose and study a variant of  Algorithm \ref{ARCbraz} which maintains  the complexity bound  $O(\epsilon^{-3/2})$. 
Our algorithm is  based on the use of an approximation
$B_k$ of $\nabla^2 f(x_k)$ in the construction of the cubic model and a rule for choosing the level of agreement 
between $B_k$ and  $\nabla^2 f(x_k)$. The accuracy requirements
in the approximate minimization of $m(x_k, s, \sigma_k)$ consist of (\ref{stepbraz1})  and  a condition on
$\|\nabla_s m(x_k,s_k,\sigma_k)\|$ which  includes condition  (\ref{stepbraz2})  but it is not limited to it.

{Our analysis is carried out under the following Assumptions on the function $f$ and the matrix $B_k$ used in the model  (\ref{m}).
\vskip 5pt
\begin{ipotesi}\label{ass_f}
The objective function $f$ is twice continuously differentiable on $\mathbb{R}^n$ and its  Hessian
is Lipschitz continuous on the path of iterates with Lipschitz constant $L$,
$$
\|\nabla^2 f(x_k+\beta s_k)-\nabla^2 f(x_k)\| \le L\beta \| s_k\|,\quad \forall k\ge 0,\quad  \beta\in [0,1].
$$
\end{ipotesi}
}
\vskip 5pt
\begin{ipotesi}\label{ass_Bk}
For all $k\ge 0$ and    some $\kappa_B  \ge 0$, it holds 
$$
\|B_k\|\le \kappa_B.
$$
\end{ipotesi}

Further, we suppose that the step $s_k$ computed  has the following properties.  
\vskip 5pt
\begin{ipotesi}\label{ass_m}
For all $k\ge 0$ and    some $0\le \theta_k\le \theta$, $\theta \in [0,1)$, $s_k$ satisfies
\begin{eqnarray}
& & m(x_k,s_k,\sigma_k)<m(x_k,0,\sigma_k),\label{mdesc} \\
& & \| \nabla_s m(x_k,s_k,\sigma_k)\| \le \theta_k \|\nabla f(x_k)\|. \label{tc}  
\end{eqnarray}
\end{ipotesi}
\vskip 5pt
\noindent
By (\ref{mc}) and (\ref{m}) it easily follows
\begin{equation}\label{mc_T2}
m^C(x_k,s)=T_2(x_k,s)+\frac{1}{2} s^T (\nabla^2 f(x_k)-B_k) s + \frac{L}{6} \| s \|^3.
\end{equation}
Then,   (\ref{mc}) yields
\begin{equation}\label{fsppbound2}
f(x_k+s)\le     T_2(x_k,s)  +E_k(s) ,   
\end{equation}
where 
\begin{equation}\label{e}
E_k(s)=\frac 1 2  \| \nabla^2 f(x_k)-B_k \|  \| s\|^2+ \frac{L}{6} \| s \|^3.
\end{equation}
Now, we make our key requirement on the agreement between  $B_k$ and  $\nabla^2 f(x_k)$  and analyze its effects on ARC algorithm.
\vskip 5pt
\begin{ipotesi}\label{ass_Dk}
Let $B_k\in \mathbb{R}^{n\times n}$  satisfy 
\begin{eqnarray}
\Delta_k&=&\nabla^2 f(x_k)-B_k, \quad  \|\Delta_k\| \le  \ck,  \label{Dk}\\
  \ck&=& \c,  \hspace*{82pt}  \mbox{if } \ \ \|s_k\|\ge 1,  \hspace*{86pt}  \label{bound1}\\
  \ck &\le& \alpha(1-\theta)\| \nabla f(x_k)\|, \quad    \mbox{if } \ \ \|s_k\|< 1 ,  \label{Ckbound} 
\end{eqnarray}
for all $k \ge 0$, with  $\alpha, \, C_k$ and $C$  positive scalars, $s_k\in \mathbb{R}^n$ and  $\theta\in [0,1)$ 
 as  in Assumption \ref{ass_m}.
\end{ipotesi}
\vskip 5pt

Bounds on  $\|\Delta_k\|$ and on $E_k(s_k)$ involving $\|s_k\|$ are derived below and give $E_k(s_k)=O(\|s_k\|^3)$.
\vskip 5pt
\begin{lemma}\label{DE}
Let    Assumptions \ref{ass_f}--\ref{ass_Dk} 
hold, and let $E_k(s)$  and  $\Delta_k$  as in (\ref{e}), (\ref{Dk}). Then
\begin{equation}\label{Dkcases} 
\| \Delta_k \| \le 
\left \{ \begin{array}{ll}
C\|s_k \|,  & \mbox{ if } \, \|s_k\|\ge 1, \\
\alpha (\kappa_B+\sigma_k) \|s_k\|, & \mbox{ if }\,  \|s_k \|< 1,
\end{array} 
\right.
\end{equation}
and   
\begin{equation}\label{Ekcases}
  E_k (s_k) \le 
\left \{ \begin{array}{ll}
\displaystyle 
\frac{1}{2} \left(C+\frac{L}{3}\right) \|s_k \|^3, & \mbox{ if } \, \|s_k\|\ge 1,  \\
\displaystyle 
\frac 1 2 \left(\alpha ( \kappa_B+\sigma_k) +\frac{L}{3} \right) \| s_k \| ^3, & \mbox{ if } \, \|s_k \|< 1 .
\end{array} 
\right.
\end{equation}
\end{lemma}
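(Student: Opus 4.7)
The plan is to establish the two cases of the $\|\Delta_k\|$ bound first, and then derive the $E_k(s_k)$ bound as an immediate consequence by plugging into the definition (\ref{e}).

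For the case $\|s_k\| \ge 1$, the bound is immediate: Assumption \ref{ass_Dk} together with (\ref{bound1}) gives $\|\Delta_k\| \le C_k = C$, and since $\|s_k\| \ge 1$ we have $C \le C\|s_k\|$. The harder case is $\|s_k\| < 1$, where (\ref{Ckbound}) bounds $C_k$ in terms of $\|\nabla f(x_k)\|$ rather than $\|s_k\|$. The key is to use the step condition (\ref{tc}) to control $\|\nabla f(x_k)\|$ by $\|s_k\|$. Writing out $\nabla_s m(x_k,s_k,\sigma_k) = \nabla f(x_k) + B_k s_k + \sigma_k \|s_k\| s_k$ and applying the reverse triangle inequality to (\ref{tc}) gives
\[
(1-\theta_k)\|\nabla f(x_k)\| \le \|B_k s_k + \sigma_k\|s_k\| s_k\| \le (\|B_k\| + \sigma_k\|s_k\|)\|s_k\|.
\]
Using Assumption \ref{ass_Bk} and the fact that $\|s_k\| < 1$ gives $\|\nabla f(x_k)\| \le (\kappa_B + \sigma_k)\|s_k\|/(1-\theta_k)$. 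Combining this with (\ref{Ckbound}) and $\theta_k \le \theta$ yields
\[
\|\Delta_k\| \le C_k \le \alpha(1-\theta)\|\nabla f(x_k)\| \le \alpha(1-\theta)\cdot\frac{\kappa_B+\sigma_k}{1-\theta_k}\|s_k\| \le \alpha(\kappa_B+\sigma_k)\|s_k\|,
\]
which is exactly (\ref{Dkcases}) in the second case. I expect the main delicate point here is making sure the factor $(1-\theta)/(1-\theta_k) \le 1$ is justified cleanly from $\theta_k \le \theta$; this is why the constant $(1-\theta)$ is inserted into (\ref{Ckbound}).

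Finally, both cases of (\ref{Ekcases}) follow by substituting the corresponding bound on $\|\Delta_k\|$ into the definition $E_k(s_k) = \frac{1}{2}\|\Delta_k\|\|s_k\|^2 + \frac{L}{6}\|s_k\|^3$ and factoring out $\|s_k\|^3$. No additional argument beyond the bounds just obtained is needed.
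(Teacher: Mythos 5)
Your proposal is correct and follows essentially the same route as the paper: the case $\|s_k\|\ge 1$ is immediate from (\ref{bound1}), and for $\|s_k\|<1$ both arguments use the explicit form of $\nabla_s m(x_k,s_k,\sigma_k)$ together with (\ref{tc}), Assumption \ref{ass_Bk} and $\|s_k\|<1$ to relate $\|\nabla f(x_k)\|$ and $\|s_k\|$, then combine with (\ref{Ckbound}) and substitute into (\ref{e}). The only cosmetic difference is that the paper replaces $\theta_k$ by $\theta$ at the outset and states the bound as $\|s_k\|\ge (1-\theta)\|\nabla f(x_k)\|/(\kappa_B+\sigma_k)$, whereas you keep $\theta_k$ and absorb the ratio $(1-\theta)/(1-\theta_k)\le 1$ at the end; both are valid since $\theta_k\le\theta<1$.
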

\begin{proof}
\noindent
First consider  the case $\| s_k \|\ge 1$. Trivially, the inequality in   (\ref{Dk})  gives  (\ref{Dkcases}) and 
$$
E_k(s_k)\le \frac{1}{2}C \|s_k \|^3+\frac{L}{6}\|s_k\|^3,
$$
i.e., the first bound in  (\ref{Ekcases}).

Suppose now that $\|s_k\|<1$. 
Using (\ref{tc}),  Assumptions   \ref{ass_Bk} and \ref{ass_m}, we obtain 
\begin{eqnarray}
 \theta \|\nabla f(x_k)\|&\ge &\|\nabla_s m(x_k, s_k, \sigma_k)\| \nonumber \\
& =& \left\| \, \nabla f(x_k) + B_k s_k +\sigma_k s_k \|s_k\| \,  \right\|  \label{derivm}\\
 & \ge&  \|\nabla f(x_k) \| - \|B_k\|\, \| s_k \| -\sigma_k  \| s_k\|^2 \nonumber\\
 &\ge & \|\nabla f(x_k) \|-\kappa_B \|s_k \| -\sigma_k \|s_k\|, \nonumber
\end{eqnarray}
which gives 
\begin{equation} \label{supperbound}
\| s_k\| \ge \frac{(1-\theta)\|\nabla f(x_k)\|}{\kappa_B+\sigma_k}.
\end{equation}
Thus,   (\ref{Dk}) and \eqref{Ckbound} yield
$$
\|\Delta_k\|  \le  \ck  = \frac{\ck}{\| s_k \|}\|s_k  \| \le  \frac{\ck(\kappa_B+\sigma_k)}{(1-\theta)\| \nabla f(x_k)  \|}\| s_k  \|.
$$
Finally,  (\ref{Ckbound}) implies (\ref{Dkcases}) and this along with  (\ref{e}) gives  (\ref{Ekcases}).
\end{proof}
\vskip 5pt
Taking into account the previous result and assuming $\alpha \in [0,2/3)$, we can establish when the overestimation property
$f(x_k+s_k)\le m(x_k, s_k, \sigma_k)$ is verified. Using (\ref{m}),  (\ref{mc_T2}),  (\ref{fsppbound2}) we see that if 
$E_k(s_k)\le \sigma_k\|s_k\|^3/3$,  then $m^C(x_k,s_k)\le m(x_k, s_k, \sigma_k)$ which implies that $ m(x_k, s_k, \sigma_k)$   overestimates $f(x_k+s)$. 

If $\|s_k\| \ge 1$ and   
$$
\frac{1}{2}\left(C+\frac{L}{3}\right)\le \frac{\sigma_k}{3},  \quad  \mbox { i.e., }\quad \sigma_k\ge\frac{3C+L}{2},
$$
then (\ref{Ekcases}) implies   $m^C(x_k,s_k)\le m(x_k, s_k, \sigma_k)$.
Analogously, if $\|s_k\|<1$
$$
\frac 1 2 \left(\alpha ( \kappa_B+\sigma_k) +\frac{L}{3} \right) \le \frac{\sigma_k}{3}   \quad \mbox{ i.e., }\quad  \sigma_k\ge \frac{3\alpha \kappa_B +L}{2-3\alpha},
$$
then (\ref{Ekcases}) implies   $m^C(x_k,s_k)\le m(x_k, s_k, \sigma_k)$.

We can  now  deduce an important upper bound on the regularization parameter $\sigma_k$.
\vskip 5pt
\begin{lemma}\label{Lsigmamax}
Let   Assumptions \ref{ass_f}--\ref{ass_Dk} hold. Suppose that 
the scalar $\alpha$ in Assumption \ref{ass_Dk} is such that 
$\alpha \in \left[0, \displaystyle \frac 2 3\right)$ 
and that the constant $\eta_2$ in Algorithm \ref{ARCbraz} is such that
$\eta_2\in \displaystyle \left(0, \frac{2-3\alpha}{2}\right )$. Then it holds
\begin{equation}\label{sigmamax}
\sigma_k \le \sigma_{\max}\eqdef \max \left\{\sigma_0,\, \gamma_3 \frac{3C+L}{2(1-\eta_2)},\,
\gamma_3 \frac{3\alpha\kappa_B+L}{2-3\alpha-2\eta_2}\right\}  \quad \forall k\ge 0,
\end{equation}
where $\gamma_3$ is the constant used in \eqref{sigma}. 
\end{lemma}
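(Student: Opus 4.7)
The plan is to show by induction on $k$ that $\sigma_k \le \sigma_{\max}$, using the standard ARC argument: whenever $\sigma_k$ exceeds a certain threshold, the iteration is very successful (i.e., $\rho_k \ge \eta_2$), so by the update rule \eqref{sigma} the regularization parameter does not increase. Combined with the fact that $\sigma_{k+1} \le \gamma_3 \sigma_k$ in the worst case, this yields the stated bound.

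First I would express $\rho_k$ in a convenient form. The descent condition \eqref{mdesc} together with \eqref{m} gives $T_2(x_k,0) - T_2(x_k,s_k) > \frac{\sigma_k}{3}\|s_k\|^3$. On the other hand, from \eqref{fsppbound2} we have $f(x_k+s_k) \le T_2(x_k,s_k) + E_k(s_k)$, so
\begin{equation*}
\rho_k \;=\; \frac{f(x_k)-f(x_k+s_k)}{T_2(x_k,0)-T_2(x_k,s_k)} \;\ge\; 1 - \frac{E_k(s_k)}{T_2(x_k,0)-T_2(x_k,s_k)} \;\ge\; 1 - \frac{3\,E_k(s_k)}{\sigma_k\|s_k\|^3}.
\end{equation*}
Hence a sufficient condition for $\rho_k \ge \eta_2$ is $E_k(s_k) \le \frac{(1-\eta_2)\sigma_k}{3}\|s_k\|^3$.

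Next I would plug in the two bounds on $E_k(s_k)$ supplied by Lemma~\ref{DE}. When $\|s_k\|\ge 1$, the sufficient condition becomes $\frac12(C+L/3) \le \frac{(1-\eta_2)\sigma_k}{3}$, i.e.\ $\sigma_k \ge \frac{3C+L}{2(1-\eta_2)}$. When $\|s_k\| < 1$, the bound on $E_k(s_k)$ itself depends on $\sigma_k$, which is the only mildly delicate point; solving $\frac12(\alpha(\kappa_B+\sigma_k)+L/3) \le \frac{(1-\eta_2)\sigma_k}{3}$ for $\sigma_k$ gives $\sigma_k \ge \frac{3\alpha\kappa_B+L}{2-3\alpha-2\eta_2}$, and here the hypothesis $\eta_2 < (2-3\alpha)/2$ is precisely what keeps the denominator positive so that the inequality makes sense. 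Define
\begin{equation*}
\tau \;\eqdef\; \max\!\left\{\frac{3C+L}{2(1-\eta_2)},\; \frac{3\alpha\kappa_B+L}{2-3\alpha-2\eta_2}\right\}\!,
\end{equation*}
so that $\sigma_k \ge \tau$ implies $\rho_k \ge \eta_2$ in either case.

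Finally I would argue inductively. Trivially $\sigma_0 \le \sigma_{\max}$. Suppose $\sigma_k \le \sigma_{\max}$. If $\sigma_k < \tau$, then from \eqref{sigma} we get $\sigma_{k+1} \le \gamma_3 \sigma_k < \gamma_3 \tau \le \sigma_{\max}$. If $\sigma_k \ge \tau$, then by the argument above $\rho_k \ge \eta_2$, which by \eqref{sigma} yields $\sigma_{k+1} \le \sigma_k \le \sigma_{\max}$. Either way $\sigma_{k+1} \le \sigma_{\max}$, completing the induction. The main obstacle is just the case $\|s_k\| < 1$, where one must isolate $\sigma_k$ from an estimate in which it appears on both sides; the restriction on $\eta_2$ and the assumption $\alpha \in [0,2/3)$ exactly guarantee that this manipulation produces a finite threshold.
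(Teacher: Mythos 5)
Your proposal is correct and follows essentially the same route as the paper: you bound $1-\rho_k$ by $3E_k(s_k)/(\sigma_k\|s_k\|^3)$ via \eqref{mdesc} and \eqref{fsppbound2}, plug in the two cases of Lemma~\ref{DE}, and obtain exactly the thresholds $\frac{3C+L}{2(1-\eta_2)}$ and $\frac{3\alpha\kappa_B+L}{2-3\alpha-2\eta_2}$, with the hypothesis on $\eta_2$ keeping the latter denominator positive. The only difference is cosmetic: you spell out the final induction with the worst-case factor $\gamma_3$, which the paper compresses into the remark that the updating rule \eqref{sigma} yields \eqref{sigmamax}.
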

\begin{proof}
 Let us derive  conditions on $\sigma_k$  ensuring    $\rho_k\ge \eta_2$.
By (\ref{m}) and (\ref{mdesc})  it  follows  $\|s_k\|\neq0$ and 
\begin{equation}\label{m_T2}
0< m(x_k,0,\sigma_k)-m(x_k,s_k,\sigma_k)=T_2(x_k,0 )-T_2(x_k,s_k)- \frac{\sigma_k}{3}\| s_k \|^3.
\end{equation}
Thus
\begin{equation}\label{diffT}
T_2(x_k,0 )-T_2(x_k,s_k)> \frac{\sigma_k}{3}\|s_k \|^3>0,
\end{equation}
and by (\ref{fsppbound2}) and the fact that $E_k(s_k)> 0$
\begin{eqnarray}
1-\rho_k =   \frac{   f(x_k+s_k)-T_2(x_k,s_k) }{ T_2(x_k,0)-T_2(x_k,s_k) }  
\le \frac{E_k(s_k)}{ T_2(x_k,0)-T_2(x_k,s_k) } <\frac{3 E_k(s_k)}{\sigma_k\| s_k \|^3}. \label{succ1} 
\end{eqnarray}
If $\| s_k \|\ge 1$, using  (\ref{Ekcases})   we obtain
\[
1-\rho_k  {<}  \frac{3}{2\sigma_k} \left(C+\frac{L}{3}\right),
\]
and $\rho_k\ge \eta_2$ is guaranteed when 
\[
\sigma_k \ge \frac{3C+L}{2(1-\eta_2)} .
\]
On the other hand, if $\| s_k \| < 1$ then (\ref{Ekcases}) and \eqref{succ1} give
\[
1-\rho_k   {<}  \frac{3}{2\sigma_k}  \left(\alpha ( \kappa_B+\sigma_k) +\frac{L}{3} \right),
\]
and  $\rho_k\ge \eta_2$ is guaranteed when 
\[
\sigma_k\ge \frac{3\alpha\kappa_B+L}{2-3\alpha-2\eta_2} ,
\]
noting that the denominator is strictly positive by assumption.
Then, the updating rule (\ref{sigma}) implies  $\sigma_{k+1}\le \sigma_k$ in case $\rho_k\ge \eta_2$  and,
more generally,   inequality  (\ref{sigmamax}).
\end{proof}
\vskip 5pt
An important consequence of  Lemma \ref{DE} and  Lemma \ref{Lsigmamax} is that  \eqref{diffck} implies 
\begin{equation}\label{bound_cost}
\|\nabla^2 f(x_k)-B_k\|\le \max(C, \, \alpha(\kappa_B+\sigma_{\max}))\|s_k\|,
\end{equation}
for all $k\ge 0$,   and consequently  condition  (\ref{AM4}) is satisfied.

In Lemma \ref{Lsigmamax}, the value of $\alpha$ in (\ref{Ckbound}) determines the accuracy of $B_k$ as an approximation to $\nabla^2 f(x_k)$
and the admitted maximum value of $\eta_2$. For decreasing values of $\alpha$, the accuracy of the Hessian approximation increases
and $\eta_2$ reaches one. On the other hand, if $\alpha$ tends to $\displaystyle \frac 2 3$ then the accuracy of the Hessian approximation 
reduces, $\eta_2$  tends to zero and $\sigma_{\max}$ tends to infinity.\footnote{Values  $\eta_2= \displaystyle \frac 3 4$ and 
$\eta_2= \displaystyle \frac 9 {10}$ used in the literature for 
 the trust-region and ARC frameworks are achieved setting  $\alpha= \displaystyle \frac 1 6 $  and
{$\alpha= \displaystyle \frac 1 {15}$},   respectively.} 

On the base of the previous analysis we sketch  our version of Algorithm \ref{ARCbraz} denoted as Algorithm \ref{ARCalgonew}.
The main feature is the adaptive rule for adjusting the agreement between  $B_k$ and $\nabla^2 f(x_k)$  as specified in
Assumption \ref{ass_Dk}.
 At the beginning of $k$th iteration, the variable ${\rm  flag}$ is equal to either 1 or 0 and  determines the value of $C_k$; specifically
$C_k=C$ if ${\rm flag}=1$, $C_k=\alpha(1-\theta)\|\nabla f(x_k)\|$  otherwise  with  $\nabla f(x_k)$ being available (at iteration $k=0$, flag is set equal to $1$).
Scalars $C$ and  $\alpha$ are initialized at Step 0; the choice of $\alpha$ and $\eta_2$  is made in accordance to the   results presented above. 
Then, $B_k$ is computed at Step 2 and the trial step $s_k$ is computed at Step 3.

Step 4 is devoted to a check on the accordance between $C_k$ and $\|s_k\|$   since (\ref{Ckbound})  is required to hold if $\|s_k\|<1$ 
whereas $\|s_k\|$ can be determined only after $B_k$ is formed.  Therefore, at the end of a successful iteration  the value of ${\rm flag}$ is fixed 
accordingly to the steplength of the last step. Successively, once $B_k$ and   $s_k$ have been computed,   if
$\| s_k \|< 1$,   ${\rm flag}=1$ and  $C>\alpha(1-\theta)\|\nabla f(x_k)\|$ hold,  then the step is rejected and the iteration is {\em unsuccessful};
variable ${\rm flag}$ is set  equal to $0$ and  $B_k$ is recomputed at the successive iteration. This unsuccessful iteration is ascribed to  
the choice of matrix $B_k$, hence the regularization parameter is left unchanged. 
On the other hand, if the level of accuracy in matrix $B_k$ with respect to $\nabla^2 f(x_k)$ fulfills the requests (\ref{bound1})--(\ref{Ckbound}),  
in Step 5 we proceed  for acceptance of the trial steps and  update of  the regularizing parameter  as in Algorithm \ref{ARCbraz}. 
Summarizing, by construction,  {\em  Assumption \ref{ass_Dk} is satisfied at every successful iteration and at any unsuccessful iteration detected in Step 5. }

\vskip 5pt
\algo{ARCalgonew}{ARC algorithm with dynamic   Hessian  accuracy}{
\textbf{Step \boldmath$0$: Initialization}. Given an initial point $x_0$, the initial regularizer $\sigma_0>0$,  the 
accuracy level $\epsilon$. Given   $\theta$, $\alpha$,  $\eta_1$, $\eta_2$, $\gamma_1$, $\gamma_2$, $\gamma_3$, $\sigma_{\textrm{min}}$, $C$ s.t.
\[
 0<\theta<1, \; \alpha\in  \left[0, \displaystyle \frac 2 3\right),  \; \sigma_{\min}\in (0, \sigma_0], \;  0<\eta_1\le \eta_2 < \frac{2-3\alpha}{2}, \; 0<\gamma_1<1<\gamma_2<\gamma_3,\;
C>0
\]
\\
Compute $f(x_0)$ and set  $k=0$, $C_0=C$, ${\rm flag}=1$.\\

\textbf{Step \boldmath$1$: Test for termination}. If $\|\nabla f(x_k)\|\le \epsilon$, terminate with the current solution $\widehat x= x_k$.  
\vskip 2pt
\textbf{Step \boldmath$2$: Hessian approximation.} Compute  $B_k$  satisfying \eqref{Dk}.
\vskip 2pt
\textbf{Step \boldmath$3$: Step computation.} Choose $\theta_k\le \theta$. Compute the step $s_k$ satisfying (\ref{mdesc}) and (\ref{tc}).
\vskip 2pt
\textbf{Step \boldmath$4$: Check on $\|s_k\|$. }\\
If $\| s_k \|< 1$ and ${\rm flag}=1$  and $C>\alpha(1-\theta)\|\nabla f(x_k)\|$ 
\begin{itemize}
\item [] set $x_{k+1}=x_k$, $\sigma_{k+1}=\sigma_k$,\quad (\textit{unsuccessful iteration}) 
\item [] set $C_{k+1}=\alpha(1-\theta)\|\nabla f(x_k)\|$, ${\rm flag}=0$,
\item[] set $k=k+1$ and go to Step $2$.
\end{itemize}
\vskip 2pt
\textbf{Step \boldmath$5$: Acceptance of the trial step and parameters update.}  \\
Compute $f(x_k+s_k)$ and  $\rho_k$ in (\ref{rho}).
If $\rho_k\ge \eta_1$ 
\begin{itemize}
\item[] define $x_{k+1}=x_k+s_k$, set
$$
\sigma_{k+1}\in\left \{ \begin{array}{ll}
\left[\mbox{max}(\sigma_{\textrm{min}},\gamma_1\sigma_k),\sigma_k\right], & \mbox {if } \rho_k\ge \eta_2,\quad~~~~~~ \textit{(very successful iteration)}\\
\left[\sigma_k,\gamma_2\sigma_k \right], & \mbox{if } \rho_k\in\left[ \eta_1,\eta_2\right),\quad \textit{(successful iteration)}\\
\end{array}
\right .
$$
 \item[] If $\|s_k \|\ge 1$ set $C_{k+1}=C$,  ${\rm flag}=1$.  \\
 \hspace*{5pt} Otherwise set $C_{k+1}=\alpha(1-\theta)\|\nabla f(x_{k+1})\|$, ${\rm flag}=0$.
 \item [] Set $k=k+1$ and go to Step $1$.

\end{itemize}
else
 \begin{itemize}
 \item[] define $x_{k+1}=x_k$, $\sigma_{k+1}\in \left[\gamma_2\sigma_k,\gamma_3\sigma_k\right], \quad $  (\textit{unsuccessful iteration})
 \item[] $C_{k+1}=C_k$, $B_{k+1}=B_k$,
 \item [] set $k=k+1$ and go to Step $3$.
 \end{itemize}
}
\vskip 10 pt

Finally,   both ${\rm flag}$ and $C_k$ are updated in Step 5 as follows.
If the iteration is {\em successful},  we update   ${\rm flag}$ and $C_k$ 
following (\ref{bound1})--(\ref{Ckbound})  and using the norm of  the accepted trial step; 
clearly,  this is a prediction  as the step $s_{k+1}$ is not available at 
this stage  and such a setting may be rejected 
at Step 4 of the successive iteration. If the iteration is {\em unsuccessful}, then we do not change either $C_k$ or $B_k$.

\vskip 10pt

The classification of successful and unsuccessful iterations of the Algorithm \ref{ARCalgonew} between 0 and $k$   can be made introducing the sets
\begin{eqnarray}
\mathcal{S}_k &=&\Set{0\le j\le k | j \mbox{ successful in the sense of Step 5}},\\
 \mathcal{U}_{k,1}&=&\Set{0\le j\le k | j \mbox{ unsuccessful in the sense of Step 5}},\label{uk1}\\
\mathcal{U}_{k,2}&=&\Set{0\le j\le k | j \mbox{ unsuccessful in the sense of Step 4}}\label{uk2}.
\end{eqnarray}

More insight into the settings of $C_k$ and $\sigma_k$ in our algorithm, first we note  that  $C_k$  satisfies
$$
C_k=\alpha\omega(s_k) (1-\theta) \|\nabla f(x_k) \| + (1-\omega(s_k))C,
$$
where $\omega: W \rightarrow \Set{0,1}$ denotes the characteristic function of  $W=\{s_k: \|s_k\| <1\}$. 
It follows that  if
$$
\|\nabla f(x)\|\le \kappa_g,
$$
for all $x$ in an open convex set $X$ containing   $\{x_k\}$ and some positive $\kappa_g$, then $C_k\le \max\{C, \, \alpha(1-\theta)\kappa_g\}$.

Second, we observe that the update of $\sigma_k$ is not affected by unsuccessful iterations in the sense of Step 4. In fact, we have
$\sigma_{k+1}=\sigma_k$ whenever an unsuccessful iteration occurs at Step 4 and  the rule  for adapting $\sigma_j$,  $j\le k$, has the form 
\begin{eqnarray}
 \sigma_{j+1}&\ge& \gamma_1\sigma_j ,    \ \quad  j\in {\cal{S}}_k\label{newsigma1},\\
 \sigma_{j+1}&\ge& \gamma_2 \sigma_j ,   \    \quad  j\in  {\cal{U}}_{k,1} \label{newsigma2},\\
  \sigma_{j+1} &=&\sigma_j,   \   \qquad  j\in  {\cal{U}}_{k,2}.\label{newsigma3}
\end{eqnarray}
As a consequence, the upper bound on the scalars $\sigma_k$ established in Lemma \ref{Lsigmamax} is still valid.

\section{Complexity analysis}\label{complexity}
In this section we study the iteration complexity of   Algorithm \ref{ARCalgonew} assuming that $f$ is bounded below, i.e., there exists
$f_{low}$ such that  
$$ 
f(x)\ge f_{low},~\forall x\in\mathbb{R}^n.
$$ 

We consider two possible stopping criteria for
the approximate minimization of model $m_k$ at Step 3. Given $\theta\in (0,1)$, the first criterion has the form  
\begin{equation}
\label{tcsub}
\| \nabla_s m(x_k,s_k,\sigma_k)\| \le \theta \min \left( \| s_k\|^2, \|\nabla f(x_k)\|  \right), 
\end{equation}
which amounts to (\ref{tc}) with $\theta_k= \theta \min \left( 1, \frac{\| s_k\|^2}{\|\nabla f(x_k)\|} \right)$. 
The  second criterion is  considered in \cite[Eqn. (3.28)]{ARC1} and takes the form
\begin{equation}
\label{tc.s}
 \| \nabla_s m(x_k,s_k,\sigma_k)\| \le \theta \min(1,\|s_k\|) \|\nabla f(x_k)\|. 
\end{equation}
It corresponds to the choice $\theta_k= \theta \min(1,\|s_k\|)$ in (\ref{tc}).
\vskip 5pt
\begin{lemma}
\label{sk}
 Let Assumptions \ref{ass_f} and \ref{ass_Bk} hold.
Suppose that  $\alpha \in \left[0, \displaystyle \frac 2 3\right)$ and 
$\eta_2\in \displaystyle \left(0, \frac{2-3\alpha}{2}\right )$ in Algorithm \ref{ARCalgonew}.
Then, at   iteration $k\in \mathcal{S}_k\cup \mathcal{U}_{k,1}$ 
$$
\|s_k \| \ge \sqrt{\zeta \|\nabla f(x_k+s_k)\|},
$$
for some  positive $\zeta$, both when $s_k$ satisfies (\ref{tcsub})   and when =
$s_k$ satisfies (\ref{tc.s}) and the norm of the Hessian is bounded above 
by a constant $\kappa_H$ on the path of iterates,
\begin{equation}\label{bound_H}
\|\nabla^2 f(x_k+\beta s_k)\| \le \kappa_H,\quad \forall k\ge 0,\quad  \beta\in [0,1].
\end{equation}
\end{lemma}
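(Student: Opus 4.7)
The plan is to bound $\|\nabla f(x_k+s_k)\|$ from above by a constant multiple of $\|s_k\|^2$, and then invert. For any iteration $k\in\mathcal{S}_k\cup \mathcal{U}_{k,1}$, Assumption \ref{ass_Dk} is in force (as noted in the text immediately before Algorithm \ref{ARCalgonew}), so Lemma \ref{DE} and Lemma \ref{Lsigmamax} both apply. Starting from
$$\nabla f(x_k+s_k)-\nabla_s m(x_k,s_k,\sigma_k)=\bigl(\nabla f(x_k+s_k)-\nabla f(x_k)-\nabla^2 f(x_k)s_k\bigr)+\Delta_k s_k-\sigma_k\|s_k\|s_k,$$
I would apply the triangle inequality together with the standard integral-remainder bound $\|\nabla f(x_k+s_k)-\nabla f(x_k)-\nabla^2 f(x_k)s_k\|\le (L/2)\|s_k\|^2$ (immediate from Assumption \ref{ass_f}). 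Using Lemma \ref{DE} to write $\|\Delta_k\|\le \kappa_\Delta \|s_k\|$ with $\kappa_\Delta\eqdef\max\{C,\alpha(\kappa_B+\sigma_{\max})\}$, and $\sigma_k\le\sigma_{\max}$ from Lemma \ref{Lsigmamax}, I obtain a uniform estimate
$$\|\nabla f(x_k+s_k)\|\le \|\nabla_s m(x_k,s_k,\sigma_k)\|+K\|s_k\|^2,\qquad K\eqdef\tfrac{L}{2}+\kappa_\Delta+\sigma_{\max}.$$

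For the criterion (\ref{tcsub}), the conclusion is essentially immediate: $\|\nabla_s m(x_k,s_k,\sigma_k)\|\le \theta\|s_k\|^2$, so $\|\nabla f(x_k+s_k)\|\le (\theta+K)\|s_k\|^2$, and the claim holds with $\zeta=1/(\theta+K)$.

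For the criterion (\ref{tc.s}) the only complication is that the bound involves $\|\nabla f(x_k)\|$ rather than $\|s_k\|^2$ directly, producing an apparent circularity. To break it, I would use the Hessian bound (\ref{bound_H}) together with the mean value theorem to write $\|\nabla f(x_k)\|\le \|\nabla f(x_k+s_k)\|+\kappa_H\|s_k\|$. Combining this with $\|\nabla_s m(x_k,s_k,\sigma_k)\|\le \theta\min(1,\|s_k\|)\|\nabla f(x_k)\|$ yields, after a short case split on whether $\|s_k\|\le 1$ or $\|s_k\|>1$ (using $\|s_k\|\le \|s_k\|^2$ in the latter and $\theta\|s_k\|\le\theta<1$ in the former), an inequality of the form
$$\|\nabla f(x_k+s_k)\|\,(1-\theta)\le (\theta\kappa_H+K)\|s_k\|^2,$$
from which $\zeta=(1-\theta)/(\theta\kappa_H+K)$ works.

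The main obstacle is the second case, where one must disentangle $\|\nabla f(x_k)\|$ from $\|\nabla f(x_k+s_k)\|$ without picking up a diverging constant; the key observation is simply that the coefficient multiplying $\|\nabla f(x_k+s_k)\|$ on the right is strictly less than one (either $\theta\|s_k\|<\theta<1$ or $\theta<1$), so the term can be absorbed to the left side. Taking $\zeta$ as the minimum of the two constants obtained above completes the proof.
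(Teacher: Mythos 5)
Your proposal is correct and follows essentially the same route as the paper's proof: the same decomposition of $\nabla f(x_k+s_k)$ via $\nabla_s m(x_k,s_k,\sigma_k)$, the same use of the Taylor remainder bound, Lemma \ref{DE} and Lemma \ref{Lsigmamax} (valid on $\mathcal{S}_k\cup\mathcal{U}_{k,1}$), and the same absorption of the $\theta\|\nabla f(x_k+s_k)\|$ term in the case of criterion \eqref{tc.s}, yielding the same constants $\zeta$.
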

\begin{proof}
Taylor expansions of $f$ and $\nabla f$ give
\begin{eqnarray}
 f&(x_k+s)= f(x_k)+s^T \nabla f(x_k)+\frac{1}{2}s^T\nabla^2 f(x_k)s+\int_0^1 (1-\tau)s^T(\nabla^2 f(x_k+\tau s)-\nabla^2 f(x_k))s\,d\tau,\nonumber\\
&\nabla f(x_k+s_k)=\nabla f(x_k)+\int_0^1\nabla^2 f(x_k+t s_k )s_k dt. \label{diffgrad}
\end{eqnarray}
Then, noting that the assumptions of  Lemma \ref{Lsigmamax} hold  at iterations $k\in \mathcal{S}_k \cup \mathcal{U}_{k,1}$, using the Lipschitz continuity of $\nabla^2 f$, (\ref{bound1}),  (\ref{Dkcases}) 
(valid at $k\in \mathcal{S}_k\cup \mathcal{U}_{k,1}$) and  (\ref{sigmamax}),  we derive
\begin{eqnarray}
\|\nabla f(x_k+s_k)-\nabla_s T_2(x_k,s_k) \| &= &\| \nabla f(x_k+s_k)-\nabla f(x_k)-B_k s_k\| \nonumber\\
&\le& \|\Delta_k s_k\|+ \int_0^1 \| (\nabla^2 f(x_k+\tau s_k)-\nabla^2 f (x_k)) s_k\|\,d\tau \nonumber \\
&\le&  \|\Delta_k \| \|s_k\| + \frac{L}{2}   \| s_k\|^2 \nonumber \\
&\le& \left( \max\left(C,  \alpha( \kappa_B+\sigma_{\max}) \right)+\frac{L}{2}  \right)\| s_k \|^2.\label{dis2}
\end{eqnarray}

Moreover, by (\ref{derivm})
\begin{eqnarray}
\nabla f(x_k+s_k)&=&\nabla f(x_k+s_k)-\nabla_s T_2 (x_k,s_k)+\nabla_s T_2(x_k,s_k)+\sigma_k\| s_k\| s_k-\sigma_k\| s_k\| s_k \nonumber \\
&=&  \nabla f(x_k+s_k)-\nabla_s T_2(x_k,s_k)+ \nabla_s m(x_k, s_k, \sigma_k) -\sigma_k\| s_k\| s_k.\label{dis1}
\end{eqnarray}

Now consider the case  $s_k$ satisfying (\ref{tcsub}).  
Condition \eqref{tcsub} along with (\ref{dis1}) and (\ref{dis2})  yield 
\begin{eqnarray*}
\|\nabla f(x_k+s_k)\|
&\le& \left(\max \left( C, \alpha(\kappa_B+\sigma_{\max})  \right)+ \frac{L}{2}+\theta+\sigma_{\max}\right) \| s_k \|^2,
\end{eqnarray*} 
which  gives the claim with $\zeta=1/\left( \max\ \left(C, \alpha(\kappa_B+\sigma_{\max})  \right)+L/2+\theta+\sigma_{\max}  \right)$. 

We turn now  the attention to the case $s_k$ satisfying (\ref{tc.s}).   Combining  \eqref{diffgrad} 
and the boundness of the Hessian we have 
\[
\| \nabla f(x_k) \|\le \| \nabla f(x_k+s_k)\| + \kappa_H\|s_k\|,
\]
 and  by (\ref{tc.s})
\begin{eqnarray*}
\|\nabla_s m(x_k, s_k, \sigma_k)\| &\le & \theta \min(1,\|s_k\|) \|\nabla f(x_k+s_k)\| +\theta \min(1,\|s_k\|) \kappa_H\|s_k\|\\
& \le & \theta  \|\nabla f(x_k+s_k)\| +\theta  \kappa_H \|s_k\|^2.
\end{eqnarray*}
Thus,   (\ref{dis2})   and  (\ref{dis1})   give 
$$
(1-\theta)\|\nabla f(x_k+s_k)\|\le  \left( \max\left(C,  \alpha(\kappa_B+\sigma_{\max}) \right)+ L/2+\theta \kappa_H+\sigma_{\max}  \right)    \|s_k\|^2,
$$
and  the claim follows with $\zeta= (1-\theta)/(\max \left(C,\alpha(\kappa_B+\sigma_{\max})\right)+ L/2+\theta \kappa_H+\sigma_{\max}) $. 
\end{proof} 
\vskip 5pt

\begin{theorem}
\label{TComplexityExact}
Suppose  that $f$ in \eqref{Pb} is lower bounded by $f_{\mbox{\scriptsize low}}$
and the assumptions of Lemma \ref{sk} hold.
Then  Algorithm \ref{ARCalgonew} requires at most
\begin{equation}\label{succfinale}
{\cal{I}}_S=\left\lfloor \kappa_s\frac{f(x_0)-f_{\mbox{\scriptsize low}}}{\epsilon^{3/2}}\right\rfloor,
\end{equation}
successful iterations and at most
\begin{eqnarray*}
{\cal{I}}_T &= & \left\lfloor \kappa_s\frac{f(x_0)-f_{\mbox{\scriptsize low}}}{\epsilon^{3/2}} \right\rfloor \left(1+\frac{\lvert \rm{log }\gamma_1 \rvert}{\rm{log }\gamma_2}\right)+\frac{1}{\rm{log} \gamma_2}{\rm{log}}\left(\frac{\sigma_{\max}}{\sigma_0}\right) +  \left\lfloor \kappa_u (f(x_0)-f_{\mbox{\scriptsize low}})\right\rfloor,
\end{eqnarray*}
iterations to produce an iterate $x_{\widehat k}$ satisfying (\ref{grad_eps}), with $\kappa_s=\frac{3}{ \eta_1\sigma_{\min}\zeta^{3/2}}$
and $\zeta$ as in Lemma \ref{sk}, and $\kappa_u=\frac{3}{\eta_1\sigma_{\min}}$.
\end{theorem}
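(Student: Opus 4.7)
The strategy is to partition the iteration indices produced before termination into the three disjoint sets $\mathcal{S}_k$, $\mathcal{U}_{k,1}$, $\mathcal{U}_{k,2}$ of \eqref{uk1}--\eqref{uk2}, bound each cardinality separately, and add them. All the ingredients are already available: the cubic-model descent inequality \eqref{diffT}, Lemma~\ref{sk} relating $\|s_k\|$ to $\|\nabla f(x_k+s_k)\|$, the modified $\sigma_k$-dynamics \eqref{newsigma1}--\eqref{newsigma3}, and the upper bound $\sigma_{\max}$ from Lemma~\ref{Lsigmamax}.

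\textbf{Bounding $|\mathcal{S}_k|$.} At any successful iteration, $\rho_k\ge\eta_1$ in \eqref{rho} together with \eqref{diffT} yields
\begin{equation*}
f(x_k)-f(x_{k+1})\ge \eta_1[T_2(x_k,0)-T_2(x_k,s_k)] > \frac{\eta_1\sigma_{\min}}{3}\|s_k\|^3 .
\end{equation*}
Since Lemma~\ref{sk} applies on $\mathcal{S}_k\cup\mathcal{U}_{k,1}$, and every successful iteration preceding termination satisfies $\|\nabla f(x_{k+1})\|>\epsilon$, this gives $\|s_k\|^3\ge \zeta^{3/2}\epsilon^{3/2}$ on $\mathcal{S}_k$. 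Telescoping and using $f\ge f_{\mathrm{low}}$ produces $|\mathcal{S}_k|\le 3(f(x_0)-f_{\mathrm{low}})/(\eta_1\sigma_{\min}\zeta^{3/2}\epsilon^{3/2})$, which is exactly \eqref{succfinale}.

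\textbf{Bounding $|\mathcal{U}_{k,1}|$.} I would run the standard ARC argument, adapted to \eqref{newsigma1}--\eqref{newsigma3}: the $\sigma$-parameter is unchanged on $\mathcal{U}_{k,2}$, shrinks by at most a factor $\gamma_1$ on $\mathcal{S}_k$, and grows by at least a factor $\gamma_2$ on $\mathcal{U}_{k,1}$. Multiplying per-iteration factors gives $\sigma_k\ge \sigma_0\gamma_1^{|\mathcal{S}_k|}\gamma_2^{|\mathcal{U}_{k,1}|}$; combining with $\sigma_k\le\sigma_{\max}$ and taking logarithms yields
\begin{equation*}
|\mathcal{U}_{k,1}|\le \frac{|\mathcal{S}_k|\,|\log\gamma_1|}{\log\gamma_2}+\frac{1}{\log\gamma_2}\log\!\left(\frac{\sigma_{\max}}{\sigma_0}\right),
\end{equation*}
which matches the corresponding portion of $\mathcal{I}_T$.

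\textbf{Bounding $|\mathcal{U}_{k,2}|$ and conclusion.} This is the only genuinely new ingredient and is where I expect the main obstacle: one must exploit the logic of the flag variable. Inspecting Algorithm~\ref{ARCalgonew}, a Step~4 unsuccessful iteration can occur only when $\mathrm{flag}=1$ at entry, after which $\mathrm{flag}$ is reset to $0$; thereafter $\mathrm{flag}$ can be raised back to $1$ only at a successful iteration producing a step with $\|s_k\|\ge 1$. Therefore each element of $\mathcal{U}_{k,2}$ can be charged either to the initialization or to a distinct preceding long-step successful iteration, so $|\mathcal{U}_{k,2}|\le 1+|\{k\in\mathcal{S}_k:\|s_k\|\ge 1\}|$. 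For any such long-step successful iteration the decrease inequality above gives $f(x_k)-f(x_{k+1})\ge \eta_1\sigma_{\min}/3$, hence their number is at most $3(f(x_0)-f_{\mathrm{low}})/(\eta_1\sigma_{\min})=\kappa_u(f(x_0)-f_{\mathrm{low}})$. Adding the three bounds reproduces $\mathcal{I}_T$ up to the additive constant from the initialization event, which is absorbed into the floor in the stated expression, completing the proof.
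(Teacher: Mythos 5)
Your proposal is correct and follows essentially the same route as the paper: the same decomposition into $\mathcal{S}_k$, $\mathcal{U}_{k,1}$, $\mathcal{U}_{k,2}$, the same telescoping of the decrease $f(x_k)-f(x_{k+1})\ge \eta_1\sigma_{\min}\|s_k\|^3/3$ combined with Lemma~\ref{sk} for the successful iterations, the same $\sigma_0\gamma_1^{|\mathcal{S}_k|}\gamma_2^{|\mathcal{U}_{k,1}|}\le\sigma_{\max}$ argument for $\mathcal{U}_{k,1}$, and the same flag-based charging of each Step~4 failure to a preceding successful iteration with $\|s_k\|\ge 1$. The only slight deviation, your explicit ``$+1$'' from the initialization event, corresponds to an imprecision the paper itself glosses over and does not affect the stated bounds.
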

\vskip 5pt
\begin{proof}
The mechanism of Algorithm \ref{ARCalgonew}  for updating $\sigma_k$ has the form \req{newsigma1}--\req{newsigma3}.
An unsuccessful iteration in $\mathcal{U}_{k,2}$ does not affect the value of the regularization parameter
as $\sigma_{k+1}=\sigma_k$.  Moreover, the assumptions of  Lemma \ref{Lsigmamax} hold  at iterations $k\in \mathcal{S}_k$.
Hence, $\sigma_k\le \sigma_{\max}, \, \mbox{ for all }k\ge 0$, due to Lemma \ref{Lsigmamax}.

The upper bound on the cardinality $ |\mathcal{S}_k|$  of $\mathcal{S}_k$  follows from \cite[Theorem 2.5]{Toint1}. 
Then, by using (\ref{diffT}) and Lemma  \ref{sk},
at each successful iteration before termination it holds
\begin{eqnarray}f(x_k)-f(x_k+s_k)&\ge& \eta_1(T_2(x_k,0)-T_2(x_k,s_k)) \nonumber \\
& {\ge}&\eta_1 \frac{\sigma_k}{3}\|s_k\|^3 \label{ks_uns} \\
& \ge & \eta_1\frac{\sigma_{\min}}{3} \zeta^{3/2}\| \nabla f(x_k+s_k)\|^{3/2}   \nonumber  \\
&\eqdef&\kappa_s^{-1}\| \nabla f(x_k+s_k)\|^{3/2}.\label{ks} 
\end{eqnarray}
Consequently, before   termination  \eqref{grad_eps} it holds  $f(x_k)-f(x_k+s_k)\ge \kappa_s^{-1}\epsilon^{3/2}$ 
which implies
\[
f(x_0)-f(x_{k+1})=\sum_{j\in\mathcal{S}_k}(f(x_j)-f(x_j+s_j))\ge |\mathcal{S}_k| \kappa_s^{-1}\epsilon^{3/2},
\]
and  (\ref{succfinale}).

The  upper bound on $ |\mathcal{U}_{k,1}|$ follows from \cite[Lemma 2.4]{Toint1}. In particular,
by (\ref{newsigma1})--(\ref{newsigma3})  it holds $\sigma_0\gamma_1^{|\mathcal{S}_k|} \gamma_2^{|\mathcal{U}_{k,1}|} \le \sigma_k$
and   (\ref{sigmamax})  implies
\[
\begin{split}
 |\mathcal{U}_{k,1}|&\le |\mathcal{S}_k| \frac{|\mbox{log} \gamma_1|}{\mbox{log} \gamma_2}  +\frac{1}{\mbox{log} \gamma_2}\mbox{log}\left(\frac{\sigma_{\max}}{\sigma_0}\right) .
\end{split}
\] 

As for $ |\mathcal{U}_{k,2}|$, it  is  less or equal than the number of successful iterations with $\|s_k\|\ge1$. 
By construction, an unsuccessful iteration in $\mathcal{U}_{k,2}$ 
occurs at most once between two successful iterations with the first one such that ${\rm flag}=1$,
and it can not occur between two successful iterations if ${\rm flag}$ is null at   the first of such  iterations.
In fact, ${\rm flag}$ is reassigned only at the end of a successful iteration and  
can be set to one only in case of successful iteration with $\|s_k\|\ge1$, see Step 5 of Algorithm \ref{ARCalgonew},
except for the first iteration. If  the case ${\rm flag}=1$ and   $\|s_k\|<1$  occurs    then 
flag is set  to zero and  is not further changed    until  the subsequent successful iteration. Moreover, as ${\rm flag}$ is initialized to one, at most one additional unsuccessful iteration in $\mathcal{U}_{k,2}$  may occur before the first successful iteration.

Noting that,   by \eqref{ks_uns},
\begin{eqnarray*}
f(x_0)-f(x_{k+1})&=&\sum_{j\in\mathcal{S}_k} \left( f(x_j)-f(x_j+s_j) \right)\\
&\ge&  \sum_{\begin{small}\begin{array}{c} j\in\mathcal{S}_k\\ \|s_k\|\ge 1\end{array}\end{small}}(f(x_j)-f(x_j+s_j))\\
&\ge&  \eta_1 \frac{\sigma_{\min}}{3} \sum_{\begin{small}\begin{array}{c} j\in\mathcal{S}_k\\ \|s_k\|\ge 1\end{array}\end{small}} \|s_k\|^3, 
\end{eqnarray*}
we have
\begin{eqnarray*}
f(x_0)-f_{low}&\ge &  \eta_1 \frac{\sigma_{\min}}{3}  |\mathcal{U}_{k,2}|\eqdef \kappa_u^{-1} |\mathcal{U}_{k,2}|
\end{eqnarray*}
Then,   we obtain $|\mathcal{U}_{k,2}|\le \left\lfloor \kappa_u (f(x_0)-f_{\mbox{\scriptsize low}})\right\rfloor$  
and the proof is concluded.
\end{proof}
\vskip 5pt

The complexity analysis presented above implies 
 \[
\label{liminf}
\liminf_ {k\rightarrow \infty} ||\nabla f(x_k)\|=0.
\]
Further  characterizations of the asymptotic behaviour of $\|\nabla f(x_k)\|$ and $\|s_k\|$ are  given below  where  the sets ${\cal{S}}$, 
$\mathcal{U}_{1}$, $\mathcal{U}_{2}$ are defined as  
\begin{eqnarray*}
{\cal{S}}&=&\{k\ge 0: \, k \,  \mbox{ successful or very successful in the sense of Step 5}\},\\
\mathcal{U}_{1}&=&\{k\ge 0: \, k  \, \mbox{ unsuccessful in the sense of Step 5}\},\\
\mathcal{U}_{2}&=&\{k\ge 0: \, k \,  \mbox{ unsuccessful  in the sense of Step 4}\}.
\end{eqnarray*}

\begin{theorem}\label{fop}
Suppose  that $f$ in \eqref{Pb} is lower bounded by $f_{\mbox{\scriptsize low}}$, and that the 
assumptions of  Theorem \ref{TComplexityExact}  hold.
Then, the steps $s_k$ and the iterates $x_k$ generated by Algorithm \ref{ARCalgonew} satisfy
\begin{equation}\label {limsk}
\|s_k\|\rightarrow 0, \quad \mbox { as } k\rightarrow \infty, \quad  k\in {\cal{S}},
\end{equation}
and 
\begin{equation}\label {limgrad}
\| \nabla f(x_k) \|\rightarrow 0 , \quad \mbox{ as } k\rightarrow \infty.
\end{equation}
Moreover, unsuccessful iterations in $\mathcal{U}_{2}$ do not occur eventually.
\end{theorem}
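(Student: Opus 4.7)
The plan is to prove the three conclusions sequentially, leveraging the descent estimates already obtained in the proof of Theorem \ref{TComplexityExact} and then performing a careful bookkeeping of the variable ${\rm flag}$.

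For (\ref{limsk}), I would rely on inequality (\ref{ks_uns}), namely $f(x_k)-f(x_{k+1})\ge (\eta_1\sigma_{\min}/3)\|s_k\|^3$ for every $k\in\mathcal{S}$. Since $f$ is monotone non-increasing (with equality at every iteration in $\mathcal{U}_1\cup\mathcal{U}_2$) and bounded below by $f_{\mbox{\scriptsize low}}$, telescoping yields $\sum_{k\in\mathcal{S}}\|s_k\|^3<\infty$, and hence $\|s_k\|\to 0$ along $\mathcal{S}$.

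For (\ref{limgrad}), I would apply the same telescoping to the sharper bound (\ref{ks}), $f(x_k)-f(x_{k+1})\ge \kappa_s^{-1}\|\nabla f(x_{k+1})\|^{3/2}$ for $k\in\mathcal{S}$, deducing $\|\nabla f(x_{k+1})\|\to 0$ along $\mathcal{S}$. Because the iterate is frozen at every iteration in $\mathcal{U}_1\cup\mathcal{U}_2$, the quantity $\|\nabla f(x_j)\|$ is constant on each block of consecutive indices between two successive successful iterations, so the convergence propagates to the entire index sequence provided that $\mathcal{S}$ is infinite. To rule out the pathological case $|\mathcal{S}|<\infty$, I would invoke $\sigma_k\le\sigma_{\max}$ from Lemma \ref{Lsigmamax}: the updating rule (\ref{newsigma2}) inflates $\sigma_k$ by at least $\gamma_2>1$ at each iteration in $\mathcal{U}_1$, so only finitely many $\mathcal{U}_1$ iterations can occur in any infinite tail; a tail composed only of $\mathcal{U}_2$ iterations is then excluded by the ${\rm flag}$ analysis of the next paragraph, since two $\mathcal{U}_2$ iterations cannot be consecutive. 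This forces $\mathcal{S}$ to be infinite and yields (\ref{limgrad}).

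For the last claim, (\ref{limsk}) provides an index $\bar k$ with $\|s_k\|<1$ for every successful iteration $k\ge\bar k$. Tracking ${\rm flag}$ through the algorithm, at the end of Step 5 every such iteration resets ${\rm flag}=0$; an iteration in $\mathcal{U}_1$ leaves ${\rm flag}$ unchanged; and an iteration in $\mathcal{U}_2$ also sets ${\rm flag}=0$. Hence, once ${\rm flag}$ drops to $0$, it can be reset to $1$ only by a successful iteration with $\|s_k\|\ge 1$, which cannot occur past $\bar k$. Since the test in Step 4 is guarded by ${\rm flag}=1$, no $\mathcal{U}_2$ iteration is generated past $\bar k$.

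The main obstacle is not the analytical estimates—those are immediate from inequalities already proved in Section \ref{complexity}—but rather the combinatorial bookkeeping of the ${\rm flag}$ variable, which is needed both to conclude the last assertion and to exclude the finite-$\mathcal{S}$ degeneracy when deriving (\ref{limgrad}).
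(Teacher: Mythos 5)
Your argument is correct and follows essentially the same route as the paper: telescoping the successful-iteration decrease \eqref{ks_uns} to get \eqref{limsk}, transferring this to the gradient via Lemma \ref{sk} together with the fact that iterates are frozen at unsuccessful iterations to get \eqref{limgrad}, and tracking ${\rm flag}$ to exclude Step-4 failures eventually. Your only additions are harmless refinements: you explicitly rule out a finite $\mathcal{S}$ (which the paper leaves implicit) using $\sigma_k\le\sigma_{\max}$ and the impossibility of two consecutive $\mathcal{U}_{2}$ iterations; just note that past $\bar k$ one residual $\mathcal{U}_{2}$ iteration may still occur while a leftover ${\rm flag}=1$ persists, so the precise conclusion is ``eventually no $\mathcal{U}_{2}$ iterations'', exactly as the theorem states.
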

\begin{proof}
The first claim is proved paralleling \cite[Lemma $5.1$]{ARC1}. In particular, by (\ref{m_T2}) 
\[
\begin{split}
f(x_k)-f(x_{k+1}) & \ge  \eta_1(T_2(x_k,0)-T_2(x_k,s_k)) \ge  \eta_1\frac{\sigma_{\min}}{3}\|s_k\|^3, \, \, k\in {\cal{S}}.  
\end{split}
\]
Since $f$ is lower bounded by $f_{low}$, one has
\[
f(x_0)-f_{low}\ge f(x_0)-f(x_{k+1})= \sum_{j=0, \, j\in S}^k (f(x_j)-f(x_{j+1}))\ge  \eta_1\frac{\sigma_{\min}}{3}\sum_{j=0, \, j\in {\cal{S}}}^k \|s_j\|^3,\quad  k\ge 0,
\]
which implies  convergence of the series  $\sum_{k=0,~k\in\mathcal{S}}^{\infty} \|s_k\|^3$  and the first claim  as a consequence.

As for $\|  \nabla f(x_{k})\|$, Lemma \ref{sk} provides
\[
\zeta\|  \nabla f(x_{k+1})\|\le \| s_k \|^2\rightarrow 0, \quad \textrm{as}~k\rightarrow \infty, ~ k\in\mathcal{S}.
\]
This fact along with $\nabla f(x_{k+1}) =\nabla f(x_{k})$ at unsuccessful iterations provides the convergence of $\{\|  \nabla f(x_{k})\|\}$ to zero.

Finally,  the behaviour of $\{ \|s_k\| \}_{k\in {\cal{S}}}$ implies that  eventually all successful iterations are such that $\|s_k\|<1$. Thus, 
the mechanism of Algorithm \ref{ARCalgonew} gives ${\rm flag}=0$ for all $k$ sufficiently large and unsuccessful iterations  in the sense of Step 4
can not occur.
\end{proof}

\section{Convergence to second order critical point} \label{2ndo}
In this section we focus on the convergence of the sequence  generated by our  procedure to second-order critical points $x^*$:
\[
\nabla f(x^*)=0\quad \textrm{and}\quad \lambda_{\min}(\nabla^2 f(x^*))\ge 0.
\]

First, we   analyze the asymptotic behaviour of  $\{x_k\}$ in the case where the
approximate Hessian  $B_k$ becomes positive definite along a converging subsequence of  $\{x_k\}$. 
In such a context, we show $q$-quadratic convergence of $\{x_k\}$ under an additional mild requirement on the step, namely  the Cauchy condition. 
Second, we consider the case where the model $B_k$ is not convex and obtain a second order complexity bound in accordance with the study of Cartis et al. \cite{CGToint}.
\vskip 5pt
\begin{theorem}\label{sop1}
Suppose  that $f$ in \eqref{Pb} is lower bounded by $f_{\mbox{\scriptsize low}}$, and that the
assumptions of  Theorem \ref{TComplexityExact}  hold.
Suppose  that $\{x_{k_i}\}$  is a subsequence of successful  iterates converging to some $x^*$ and that $B_{k_i}$ are positive definite 
whenever $x_{k_i}$ is sufficiently close to $x^*$.
Then 
\begin{description}
\item{i)} $x_k\rightarrow x^*$ as  $k\rightarrow \infty$ and $x^*$ is second-order critical.
\item{ii)} If   $s_k$ satisfies
\beqn{cauchy1}
m(x_k,s_k,\sigma_k)\le m(x_k,s_k^C,\sigma_k), \, \, \forall k\ge 0 ,
\eeqn
where $s_k^C$ is the Cauchy step, i.e. 
$$
s_k^C=-\alpha_k^C \nabla f(x_k)  \quad \mbox {and} \quad \alpha_k^C=\argmin_{\alpha\ge 0} m_k(x_k,-\alpha \nabla f(x_k),\sigma_k),
$$
then  all the iterations are eventually successful and   $x_k\rightarrow x^*$ $q$-quadratically.
\end{description}
\end{theorem}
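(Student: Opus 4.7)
The strategy is to parallel the cubic-regularization convergence theory of \cite{ARC1}: leverage the first-order results of Theorem \ref{fop} together with the vanishing Hessian error guaranteed by the dynamic rule to handle part (i), and in part (ii) use the Cauchy condition to force eventual success and a Newton-type quadratic contraction.

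For part (i), Theorem \ref{fop} already yields $\|\nabla f(x_k)\|\to 0$ and $\|s_k\|\to 0$ for $k\in\mathcal{S}$; continuity of $\nabla f$ along $x_{k_i}\to x^*$ immediately gives $\nabla f(x^*)=0$. For second-order criticality I would exploit Assumption \ref{ass_Dk}: since eventually $\|s_k\|<1$, the rule forces $C_k\le\alpha(1-\theta)\|\nabla f(x_k)\|\to 0$, whence $\|B_k-\nabla^2 f(x_k)\|\to 0$. Combining with $B_{k_i}\succeq 0$ for large $i$ and continuity of $\nabla^2 f$, passing to the limit yields $\nabla^2 f(x^*)\succeq 0$. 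To promote $x_{k_i}\to x^*$ to $x_k\to x^*$ I would follow \cite[Lemma 5.2]{ARC1}: persistence of $B_k\succ 0$ with a uniform lower bound on its smallest eigenvalue in a neighbourhood of $x^*$ (by continuity of $\nabla^2 f$ and the error bound), coupled with $\|s_k\|\to 0$ on $\mathcal{S}$ and $x_{k+1}=x_k$ on unsuccessful iterations, confines iterates once they enter a small neighbourhood of $x^*$.

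For part (ii), the Cauchy condition (\ref{cauchy1}) supplies the usual quantitative lower bound on $m(x_k,0,\sigma_k)-m(x_k,s_k,\sigma_k)$ in terms of $\|\nabla f(x_k)\|$, $\|B_k\|$ and $\sigma_k$; together with the overestimation (\ref{fsppbound2}) and $E_k(s_k)=O(\|s_k\|^3)$ from Lemma \ref{DE}, this drives $\rho_k\to 1$, so every iteration is eventually very successful and $\sigma_k$ bottoms out at $\sigma_{\min}$. For the rate, the approximate minimizer satisfies $(B_k+\sigma_k\|s_k\|I)s_k+\nabla f(x_k)=r_k$ with $\|r_k\|\le\theta_k\|\nabla f(x_k)\|$. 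Exploiting $\|B_k-\nabla^2 f(x_k)\|\le C_k=O(\|\nabla f(x_k)\|)$, the equivalence $\|\nabla f(x_k)\|=\Theta(\|x_k-x^*\|)$ afforded by the positive-definite Hessian at $x^*$, the bound $\theta_k=O(\|s_k\|)=O(\|x_k-x^*\|)$ afforded by (\ref{tcsub}) or (\ref{tc.s}), and $\sigma_k\|s_k\|=O(\|x_k-x^*\|)$, one compares $s_k$ to the Newton step $-\nabla^2 f(x_k)^{-1}\nabla f(x_k)$ and invokes a Lipschitz-Hessian Taylor expansion of $\nabla f$ about $x^*$ to obtain $\|x_{k+1}-x^*\|=O(\|x_k-x^*\|^2)$.

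The main obstacle I anticipate is the clean separation of the three error sources in the rate argument---the Hessian inexactness $C_k$, the regularization perturbation $\sigma_k\|s_k\|I$, and the inner-solve residual $r_k$---all of which must be shown to scale at most linearly in $\|x_k-x^*\|$ so that the implicit Newton step inherits the quadratic contraction; in particular, the stronger termination tests (\ref{tcsub}) and (\ref{tc.s}), rather than the bare (\ref{tc}), are what make $\theta_k$ tend to zero at the right rate and must be used essentially here.
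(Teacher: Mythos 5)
Your overall architecture resembles the paper's (part (i) from the vanishing Hessian error plus passage to the limit, part (ii) from the Cauchy decrease bound and $E_k(s_k)=O(\|s_k\|^3)$ to force $\rho_k\to 1$), but there is a genuine gap in part (i) that propagates into part (ii). From $B_{k_i}\succ 0$ and $\|B_{k_i}-\nabla^2 f(x_{k_i})\|\to 0$ you conclude only $\nabla^2 f(x^*)\succeq 0$; that indeed suffices for the claim ``$x^*$ is second-order critical'', but your promotion of subsequence convergence to $x_k\to x^*$ invokes a \emph{uniform} lower bound on $\lambda_{\min}(B_k)$ in a neighbourhood of $x^*$, and your rate argument uses $\|\nabla f(x_k)\|=\Theta(\|x_k-x^*\|)$ -- both require $\lambda_{\min}(\nabla^2 f(x^*))>0$, which your outline never establishes and which does not follow from ``$B_{k_i}$ positive definite'' alone (each $B_{k_i}$ may have eigenvalues tending to zero). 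With a merely positive semidefinite limit Hessian, $x^*$ need not be an isolated stationary point and the confinement argument fails, so neither full-sequence convergence nor the quadratic rate is obtained. The paper closes exactly this step: from the hypothesis on $B_{k_i}$ and $\|\nabla^2 f(x_k)-B_k\|\le \max(C,\alpha(\kappa_B+\sigma_{\max}))\|s_k\|\to 0$ it asserts that $\nabla^2 f(x^*)$ is positive definite, hence $x^*$ is an isolated limit point, and then applies \cite[Lemma 4.10]{more} together with $\|s_k\|\to 0$ to get $x_k\to x^*$; the uniform bound $\lambda_{\min}(B_k)\ge\underline{\lambda}>0$ for all large $k$ (valid also across unsuccessful iterations, where $B_k$ is unchanged and, by Theorem \ref{fop}, Step-4 failures eventually disappear) is then what drives part (ii).

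On the rate itself, your plan is workable in principle once definiteness is secured, but it is heavier than the paper's route: the paper never compares $s_k$ with the Newton step. It derives $\|s_k\|\le \frac{1+\theta}{\underline{\lambda}}\|\nabla f(x_k)\|$ directly from (\ref{tc}) and the expression of $\nabla_s m$, shows iterations are eventually very successful via the Cauchy bound (as you propose), and then simply combines the step bound with Lemma \ref{sk}, i.e. $\zeta\|\nabla f(x_{k+1})\|\le \|s_k\|^2$, to obtain $\|\nabla f(x_{k+1})\|\le \frac{(1+\theta)^2}{\zeta\underline{\lambda}^2}\|\nabla f(x_k)\|^2$, converting to $q$-quadratic convergence of the iterates by a Taylor expansion. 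This sidesteps the three-way error bookkeeping (Hessian inexactness, regularization term, inner residual) that you identify as the main obstacle.
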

\begin{proof} $i)$
From \eqref{bound_cost} and \eqref{limsk},  it follows 
\begin{equation}
\label{asympagree}
\| \nabla^2f(x_{k})-B_{k} \|\le  \max(C,\alpha (\kappa_B+\sigma_{\max})) \| s_k\|\rightarrow 0,\quad k\rightarrow \infty, ~  k\in\mathcal{S}.
\end{equation}
As a consequence, standard perturbation results on the eigenvalues of symmetric matrices and the  
convergence of  $\{x_{k_i}\}$ to $x^*$ give that $ \nabla^2 f(x^*)$ is positive definite.
Thus, $x^*$ is an isolated limit point and the claim $i)$ is completed by using  (\ref{limsk}) and
\cite[Lemma 4.10]{more}.

$ii)$ From the convergence of $\{x_k\}$ to $x^*$, (\ref{asympagree}) and   the positive definiteness of $\nabla^2 f(x^*)$   it follows that
\[
\lambda_{\min}(B_{k})\ge \underline{\lambda} >0,\quad \forall k \in\mathcal{S}  ~\textrm{sufficiently large}.
\]
Moreover,  we know that  unsuccessful iterations in  $\mathcal{U}_{2}$ do not occur eventually. 
Then, taking into account that $B_k$  is not modified along the unsuccessful iterations in  $\mathcal{U}_{1}$,  
we conclude that 
\[
\lambda_{\min}(B_{k})\ge \underline{\lambda},\quad \forall k \,\, \textrm{sufficiently large}.
\]

In order to show that  all the iterations are eventually successful, we start using (\ref{tc}), 
(\ref{derivm}) and  obtain 
\begin{eqnarray*}
\frac{\|s_k\|}{\|(B_k +\sigma_k\| s_k\|I)^{-1} \|}-\|\nabla f(x_k)\|\le   \theta \|\nabla f(x_k)\|.
\end{eqnarray*}
Since 
$$
\|(B_k +\sigma_k\| s_k \|I)^{-1}\|=\frac{1}{\lambda_{\min}(B_k)+\sigma_k\|s_k\|}\le \frac{ 1}{ \underline{\lambda} },
$$
we get 
\begin{equation}
\label{bounds}
\|s_k\|\le    \frac{1+\theta}{\underline{\lambda}} \,  \|\nabla f(x_k)\|, \quad \forall k~\textrm{sufficiently large},
\end{equation}
and  $\|s_k\|\rightarrow 0$ due to  \eqref{limgrad}. Moreover,   by \eqref{m_T2},  \eqref{cauchy1} and  \cite[Lemma 2.1]{ARC1} 
\begin{eqnarray}
T_2(x_k,0)-T_2(x_k,s_k)&\ge&  m(x_k,0,\sigma_k)-m(x_k,s_k,\sigma_k)\nonumber\\
&\ge& \frac{\|\nabla f(x_k)\|}{6\sqrt{2}}  \min \left ( \frac{\|\nabla f(x_k)\|}{1+\|B_k\|}, \frac{1}{2} \sqrt{\frac{\|\nabla f(x_k)\|}{\sigma_k}}\right ), \label{cT}
\end{eqnarray}
and Assumption \ref{ass_Bk} and  Lemma \ref{Lsigmamax} yield
$$
T_2(x_k,0)-T_2(x_k,s_k)\ge  \frac{\|\nabla f(x_k)\|}{6\sqrt{2}}
\min \left ( \frac{\|\nabla f(x_k)\|}{1+\kappa_B}, \frac{1}{2} \sqrt{\frac{\|\nabla f(x_k)\|}{\sigma_{\max}}}\right ).
$$
Thus, eventually \eqref{limgrad} and  \eqref{bounds} give 
\begin{eqnarray*}
T_2(x_k,0)-T_2(x_k,s_k)&\ge&  \frac{\|\nabla f(x_k)\|^2}{6\sqrt{2}(1+\kappa_B)}\ge \frac{\underline{\lambda}^2}{6\sqrt{2}(1+\kappa_B)(1+\theta)^2}\|s_k\|^2\eqdef \kappa_c \|s_k\|^2 ,
\end{eqnarray*}
and  by \eqref{Ekcases}  and (\ref{fsppbound2})
$$
1-\rho_k   =   \frac{   f(x_k+s_k)-T_2(x_k,s_k) }{ T_2(x_k,0)-T_2(x_k,s_k) }  
\le \frac{E_k(s_k)}{ \kappa_c\|s_k\|^2    } <   \frac{(\alpha(\kappa_B+\sigma_{\max})+L/3)\|s_k\|^3}{2 \kappa_c\|s_k\|^2 },
$$
i.e., $\rho_k \rightarrow 1$ and the iterations are very successful eventually.

Finally, \eqref{bounds} and  Lemma \ref{sk} provide
\[
\|\nabla f(x_{k+1})\|\le \frac{\| s_k \| ^2}{\zeta} \le \frac{(1+\theta)^2}{\zeta \underline{\lambda}^2}\|\nabla f(x_k)\|^2, \quad \forall k  ~\textrm{sufficiently large},
\]
and the $q$-quadratic convergence of the sequence $\{x_k\}$ follows in a standard way  by means of the Taylor's expansion. 
\end{proof}

\vskip 5pt

Dropping the assumption that $B_k$ is  positive definite, 
convergence to second order critical points can be studied. 
Following \cite{CGToint} where a modification   of the ARC algorithm in \cite{ARC1} is proposed, we
equip  Algorithm \ref{ARCalgonew} with a further stopping criterion and impose an additional condition on the step.
First, Algorithm \ref{ARCalgonew} is stopped  when 
\begin{equation}
\label{tc.2}
\| \nabla f(x_k)\| \le \epsilon \quad \textrm{and}\quad \lambda_{\min}(B_k)\ge- \epsilon_H,\quad \epsilon,~\epsilon_H>0,
\end{equation}
which represents the approximate counterpart of the second-order optimality conditions 
with the Hessian matrix approximated by $B_k$. The above criterion does not imply,
in general, vicinity to local minima, as well as it does not guarantee the iterates to be distant from saddle points.
Then, the possibility of referring to the strict-saddle property \cite{Lee} may play a significant role;  
indeed \eqref{tc.2} implies closeness 
to a local minimum for sufficiently small values of the tolerances $\epsilon$ and $\epsilon_H$.

Second, the trial step $s_k$ computed in Step $2.2$ of Algorithm \ref{ARCalgonew} is required to 
satisfy the following additional condition: if $B_k$ is not positive semidefinite, then 
\begin{equation}\label{curvature}
m(x_k,s_k,\sigma_k )\le m(x_k,s_k^E,\sigma_k),
\end{equation}
where $s_k^E$ is defined as
\begin{equation}\label{eig}
s_k^E=\alpha_k^E u_k \quad \mbox{and} \quad \alpha_k^E=\argmin_{\alpha \ge 0} m_k(\alpha u_k), 
\end{equation}
and $u_k$ is an approximation of the eigenvector of $B_k$ associated with its smallest eigenvalue $\lambda_{\min}(B_k)$, in the sense that
\begin{equation}
\label{Cond_eig}
\nabla f(x_k)^T u_k\le 0\quad \textrm{and}\quad u_k^T B_ku_k\le \kappa_{\footnotesize \mbox{snc}}\lambda_{\min}(B_k)\|u_k\|^2,
\end{equation}
for some constant $k_{\footnotesize \mbox{snc}}\in(0,1]$.
Note that the minimization in \eqref{eig} is    global which implies
\begin{eqnarray}
& & \nabla f(x_k)^T s_k^E+(s_k^E)^T B_ks_k^E+\sigma_k\|s_k^E\|^3=0, \label{Condmg_1}\\
& & 
(s_k^E)^T B_ks_k^E+\sigma_k\|s_k^E\|^3\ge 0.\label{Condmg_2}
\end{eqnarray}

We refer to the resulting  algorithm as ARC Second Order critical point (\metodop).
The termination criterion adopted here does not affect the 
mechanism for updating $\sigma_k$, then the upper bound  $\sigma_{\max}$ on $\sigma_k$ 
given in Lemma \ref{Lsigmamax} is still valid.

Let  $\widetilde{\cal{S}}_k$ denote the set of indices of successful iterations of  \metodo whenever $\| \nabla f(x_k)\| > \epsilon$ and/or
$\lambda_{\min}(B_k)<- \epsilon_H$, i.e.,  the indices of successful iterations before \eqref{tc.2} is  met. 
Following \cite{CGToint} we  also let  $\widetilde{\cal{S}}_k^{(1)}$ be the set of indices  of successful iterations where $\| \nabla f(x_k)\| > \epsilon$  
and $\widetilde{\cal{S}}_k^{(2)}$ be the set of indices of successful iterations where $\lambda_{\min}(B_k)<- \epsilon_H$. 
Let  $\widetilde{{\cal{U}}}_{k,1}$ and  $\widetilde{{\cal{U}}}_{k,2}$ denote the set of unsuccessful iterations of \metodo analogously
to \eqref{uk1} and \eqref{uk2}. 
Remarkably, the cardinality of both  $\widetilde{\cal{S}}_k^{(1)}$  and  $\widetilde{{\cal{U}}}_{k,2}$
is   the same as in Algorithm  \ref{ARCalgonew}, see Theorem \ref{TComplexityExact}, while proceeding as in Theorem 
\ref{TComplexityExact} the cardinality of $\widetilde{{\cal{U}}}_{k,1}$ is bounded in terms of the number of successful iterations 
$\widetilde{\cal{S}}_k$, see also \cite[Lemma 2.6]{CGToint}. 
Hence, it remains to derive the cardinality of $\widetilde{\cal{S}}_k^{(2)}$.

\vskip 5pt
\begin{lemma}
Suppose that $f$ in \eqref{Pb} is lower bounded by $f_{low}$ and the  assumptions of  Theorem \ref{TComplexityExact}  hold. 
Suppose that $s_k$ satisfies \eqref{curvature}. Then, the number of successful iterations of  Algorithm \metodo  with $\lambda_{\min}(B_k)<- \epsilon_H$ is bounded above by 
$$
\left\lfloor \kappa_e\frac{f(x_0)-f_{\mbox{\scriptsize low}}}{\epsilon_H^3} \right\rfloor,
$$
where
$\kappa_e=\frac{6\sigma_{\max}^2}{\eta_1 \kappa_{\footnotesize \rm{snc}}^3}.
$

\end{lemma}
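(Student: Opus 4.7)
The plan is to lower-bound the function decrease $f(x_k)-f(x_{k+1})$ by a fixed multiple of $\epsilon_H^3$ at every iteration $k\in\widetilde{\cal{S}}_k^{(2)}$, and then telescope using $f\ge f_{\mbox{\scriptsize low}}$. Throughout, for $k\in\widetilde{\cal{S}}_k^{(2)}$ the matrix $B_k$ is not positive semidefinite, so the step requirement \eqref{curvature} is in force.

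First I would combine successfulness (i.e.\ $\rho_k\ge\eta_1$), identity \eqref{m_T2} and condition \eqref{curvature} to obtain
\[
f(x_k)-f(x_{k+1}) \ge \eta_1\bigl(T_2(x_k,0)-T_2(x_k,s_k)\bigr) \ge \eta_1\bigl(m(x_k,0,\sigma_k)-m(x_k,s_k^E,\sigma_k)\bigr).
\]
Expanding the cubic model and substituting the linear term via the global optimality condition \eqref{Condmg_1} yields the identity
\[
m(x_k,0,\sigma_k)-m(x_k,s_k^E,\sigma_k) = \frac{1}{2}(s_k^E)^T B_k s_k^E + \frac{2\sigma_k}{3}\|s_k^E\|^3,
\]
and bounding the quadratic term from below via \eqref{Condmg_2} (which gives $(s_k^E)^T B_k s_k^E\ge -\sigma_k\|s_k^E\|^3$) produces the clean lower bound $\sigma_k\|s_k^E\|^3/6$.

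The crux is then the lower bound on $\|s_k^E\|$ in terms of $\epsilon_H$. Writing $s_k^E=\alpha_k^E u_k$ with $\alpha_k^E>0$ (the degenerate case $\alpha_k^E=0$ is ruled out by \eqref{mdesc} combined with \eqref{curvature}, which together would contradict $m(x_k,s_k,\sigma_k)<m(x_k,0,\sigma_k)$), I would divide \eqref{Condmg_1} by $\alpha_k^E$, use $\nabla f(x_k)^T u_k\le 0$ from \eqref{Cond_eig}, and then invoke $u_k^T B_k u_k \le \kappa_{\footnotesize\mbox{snc}}\lambda_{\min}(B_k)\|u_k\|^2 < -\kappa_{\footnotesize\mbox{snc}}\epsilon_H\|u_k\|^2$ to obtain
\[
\sigma_k\alpha_k^E\|u_k\|^3 \ge -u_k^T B_k u_k \ge \kappa_{\footnotesize\mbox{snc}}\epsilon_H\|u_k\|^2,
\]
hence $\|s_k^E\|=\alpha_k^E\|u_k\|\ge \kappa_{\footnotesize\mbox{snc}}\epsilon_H/\sigma_k$.

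Finally, combining the above with $\sigma_k\le \sigma_{\max}$ from Lemma \ref{Lsigmamax} gives $\sigma_k\|s_k^E\|^3/6 \ge \kappa_{\footnotesize\mbox{snc}}^3\epsilon_H^3/(6\sigma_{\max}^2)$, so that $f(x_k)-f(x_{k+1})\ge \kappa_e^{-1}\epsilon_H^3$ for every $k\in\widetilde{\cal{S}}_k^{(2)}$. Summing these decreases over $\widetilde{\cal{S}}_k^{(2)}$ and using $f(x_{k+1})\ge f_{\mbox{\scriptsize low}}$ telescopes to the claimed bound $|\widetilde{\cal{S}}_k^{(2)}|\le \lfloor \kappa_e(f(x_0)-f_{\mbox{\scriptsize low}})/\epsilon_H^3\rfloor$. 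I expect the only delicate step to be the passage through \eqref{Condmg_1}--\eqref{Cond_eig} that converts negative curvature of $B_k$ into the cubic step-length bound; this mirrors the argument in \cite[Lemma 2.2]{CGToint}, while the remaining pieces (successfulness, bound on $\sigma_k$, telescoping) are routine.
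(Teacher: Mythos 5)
Your proof is correct and follows essentially the same route as the paper: successfulness plus \eqref{m_T2} and \eqref{curvature} reduce the decrease to $m(x_k,0,\sigma_k)-m(x_k,s_k^E,\sigma_k)$, which via \eqref{Condmg_1}, \eqref{Condmg_2} and \eqref{Cond_eig} is bounded below by $\sigma_k\|s_k^E\|^3/6\ge \kappa_{\footnotesize \rm{snc}}^3\epsilon_H^3/(6\sigma_{\max}^2)$, and then one telescopes using $f\ge f_{\mbox{\scriptsize low}}$. The only difference is that you spell out explicitly the intermediate bounds on $m(x_k,0,\sigma_k)-m(x_k,s_k^E,\sigma_k)$ and on $\|s_k^E\|$, which the paper delegates to the parallel argument in \cite[Lemma 2.8]{CGToint}.
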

\begin{proof}
The proof parallels that of \cite[Lemma 2.8]{CGToint}.
We have 
\begin{eqnarray}f(x_k)-f(x_k+s_k)&\ge& \eta_1(T_2(x_k,0)-T_2(x_k,s_k)) \nonumber \\
& = & \eta_1( m(x_k,0,\sigma_k )-m(x_k,s_k,\sigma_k )+\frac{\sigma_k}{3}\|s_k\|^3) \nonumber \\
& \ge & \eta_1(m(x_k,0,\sigma_k )-m(x_k,s_k^E,\sigma_k )) \nonumber \\
& \ge & \eta_1  \frac{\sigma_k}{6}\|s_k^E\|^3\nonumber \\
& \ge & \eta_1  \frac{- \kappa_{\footnotesize \mbox{snc}}^3 \lambda_{\min}(B_k)^3}{6\sigma_{\max}^2}\\
& \ge &\eta_1 \frac{ \kappa_{\footnotesize \mbox{snc}}^3\epsilon_H^3}{6\sigma_{\max}^2}\nonumber
\end{eqnarray}
in which we have used   \eqref{m_T2}, \eqref{curvature},  \eqref{Condmg_1}, \eqref{Condmg_2} and \eqref{Cond_eig}. 
As a consequence, letting $\kappa_e$ as in the statement of the theorem, before termination it holds
\[
f(x_0)-f_{low} \ge f(x_0)-f(x_{k+1})\ge\sum_{j\in\widetilde{\cal{S}}_k^{(2)}}(f(x_j)-f(x_j+s_j))\ge |\widetilde{\cal{S}}_k^{(2)}| \kappa_e^{-1}\epsilon_H^3,
\]
and the claim follows.
\end{proof}
\vskip 5pt
We thus conclude that  Algorithm \metodo produces an iterate $x_{\widehat{k}}$ satisfying \eqref{tc.2} within at most
$$
O\left(\mbox{max}(\epsilon^{-3/2},\epsilon_H^{-3})\right) ,
$$ 
iterations,  in accordance with the  complexity result in  \cite{CGToint}.

\section{Finite sum minimization}\label{fs}
Large-scale instances of the finite-sum problem (\ref{finite_sum}) can be conveniently solved by subsampled procedures
where $\nabla f^2(x_k)$ is approximated  by randomly sampling component functions $\phi_i$ 
\cite{review}. The resulting approximation of $\nabla f^2(x_k)$ takes the form 
\beqn{subhessian}   
\nabla^2 f_{{\cal D}_k} (x_k)= \frac{1}{|{\cal D}_k|} \sum_{i \in {\cal D}_k} \nabla^2 \phi_i(x_k), 
\eeqn
with  $ {\cal D}_k \subset \{1,2,\ldots,N\}$  and $|{\cal D}_k|$ being the so-called sample size.

We discuss the application of Algorithm \ref{ARCalgonew} to problem (\ref{finite_sum})  with 
\beqn{Bk_fs}  
B_k=\nabla^2 f_{{\cal D}_k} (x_k),
\eeqn
giving both deterministic and probabilistic results. The application of Algorithm \ref{ARCalgonew} to problem (\ref{finite_sum})  with such Hessian approximation
is supported  by results in the literature which give the sample size required  to obtain  $B_k$ satisfying condition  (\ref{Dk})   in probability  and will be addressed below.

Let us make the following assumption on the objective function.

\vskip 5pt 
\begin{ipotesi}\label{ass_phi}
Suppose that, for any $x\in\R^n$, there exist non-negative upper bounds $\kappa_{\phi}(x)$ such that
\[
\max_{i\in\{1,...,N\}}\| \nabla^2\phi_i(x)\| \le \kappa_{\phi}(x).
\]

\end{ipotesi}

\vskip 5 pt
Uniform and non-uniform sampling  strategies  have been proposed \cite{review, Cin, kl, Roosta, Roosta_2p}; 
for instance, the  following Lemma provides the size of uniform sampling  which probabilistically  satisfies  (\ref{Dk}).
\vskip 5pt
\begin{lemma} \label{dkbounds}
Assume that Assumption  \ref{ass_phi} holds,  $C_k>0$ is given,  the subsample ${\cal D}_k$ is chosen randomly and uniformly from $\{1,2,\ldots,N\}$ and  
$B_k$ is as in (\ref{Bk_fs}).  Then,  given $\bar \delta \in (0,1) $, 
\begin{equation}
\label{err_hess}
Pr(\lvert\lvert \nabla^2 f(x_k)-B_k \rvert\rvert \le C_k)\ge 1-\bar{\delta},
\end{equation}
whenever  the cardinality  $|{\cal D}_k|$ of  ${\cal D}_k$ satisfies 
\beqn{minbound}
|{\cal D}_k|\ge \min\left \{ N,\left\lceil\frac{4\kappa_{\phi}(x_k)}{C_k}
  \left(\frac{2\kappa_{\phi}(x_k)}{C_k}+\frac{1}{3}\right)
  \,\log\left(\frac{2n}{\overline{\delta}}\right)\right\rceil\right \}\eeqn
  
\end{lemma}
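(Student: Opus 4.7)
The plan is to view the subsampled Hessian error as the normalized sum of i.i.d., bounded, zero-mean symmetric random matrices, and then apply a matrix concentration inequality, specifically the matrix Bernstein inequality. The formula in \eqref{minbound} has the distinctive shape $\frac{R}{C_k}(\frac{R'}{C_k}+\frac{1}{3})\log(2n/\bar\delta)$ which is the fingerprint of matrix Bernstein (variance term plus the $R/3$ from the Bernstein denominator), so matching coefficients confirms the approach.

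Concretely, since ${\cal D}_k$ is drawn uniformly from $\{1,\dots,N\}$, write $B_k-\nabla^2 f(x_k)=\frac{1}{|{\cal D}_k|}\sum_{j=1}^{|{\cal D}_k|} Z_j$, where each $Z_j=\nabla^2\phi_{\xi_j}(x_k)-\nabla^2 f(x_k)$ is symmetric, $\mathbb{E}[Z_j]=0$, and
\[
\|Z_j\|\le \|\nabla^2\phi_{\xi_j}(x_k)\|+\|\nabla^2 f(x_k)\|\le 2\kappa_\phi(x_k)\eqdef R,
\]
by Assumption \ref{ass_phi} and the fact that $\nabla^2 f(x_k)$ is a convex combination of the $\nabla^2 \phi_i(x_k)$. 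Similarly, $\|\mathbb{E}[Z_j^2]\|\le R^2=4\kappa_\phi(x_k)^2$.

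Next I would invoke the matrix Bernstein inequality (e.g., Tropp), which for such a sum gives, for every $t>0$,
\[
\Pr\!\left(\left\|\frac{1}{|{\cal D}_k|}\sum_{j=1}^{|{\cal D}_k|} Z_j\right\|\ge t\right)\le 2n\exp\!\left(-\frac{|{\cal D}_k|\,t^2/2}{\sigma^2+R t/3}\right),
\]
with $\sigma^2\le 4\kappa_\phi(x_k)^2$. Setting $t=C_k$ and requiring the right-hand side to be at most $\bar\delta$ yields
\[
|{\cal D}_k|\ge \frac{2(4\kappa_\phi(x_k)^2+2\kappa_\phi(x_k)C_k/3)}{C_k^2}\log\!\left(\frac{2n}{\bar\delta}\right)=\frac{4\kappa_\phi(x_k)}{C_k}\left(\frac{2\kappa_\phi(x_k)}{C_k}+\frac{1}{3}\right)\log\!\left(\frac{2n}{\bar\delta}\right),
\]
which is exactly \eqref{minbound}. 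The ceiling handles integrality, and taking the minimum with $N$ covers the degenerate case in which the lower bound exceeds the available sample size (in which case $B_k=\nabla^2 f(x_k)$ is taken and the event holds deterministically).

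The only real obstacle is citing the correct form of matrix Bernstein, since several variants exist with slightly different constants in front of the variance and bounded-range terms; the one yielding exactly the constant $1/3$ in the Bernstein denominator is the standard Tropp's User-Friendly Tail Bound for sums of independent bounded symmetric matrices. Once that is in place, the rest of the argument is a direct computation of the variance proxy $\sigma^2$, a substitution, and algebraic rearrangement to isolate $|{\cal D}_k|$; no step requires a delicate estimate beyond the uniform bound provided by Assumption \ref{ass_phi}.
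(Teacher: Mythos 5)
Your proposal is correct, and it is essentially the argument underlying the paper's proof: the paper disposes of this lemma with a one-line citation to Theorem~7.2 of the reference \cite{bmgt}, and that theorem is itself established by exactly the route you take --- write $B_k-\nabla^2 f(x_k)$ as an average of independent, zero-mean, symmetric matrices bounded by $R=2\kappa_\phi(x_k)$ (using Assumption~\ref{ass_phi} and the fact that $\nabla^2 f(x_k)$ is the average of the $\nabla^2\phi_i(x_k)$), apply the operator/matrix Bernstein inequality with variance proxy $\sigma^2\le 4\kappa_\phi(x_k)^2$, set $t=C_k$, and solve for $|{\cal D}_k|$; your algebra reproducing the factor $\frac{4\kappa_\phi(x_k)}{C_k}\bigl(\frac{2\kappa_\phi(x_k)}{C_k}+\frac{1}{3}\bigr)\log(2n/\bar\delta)$ is right. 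Two small points worth making explicit: the Bernstein step requires the indices to be drawn i.i.d.\ (i.e.\ uniformly \emph{with} replacement), which is the sampling model intended here, whereas your handling of the $\min\{N,\cdot\}$ branch ("full sample, so $B_k=\nabla^2 f(x_k)$ deterministically") implicitly switches to sampling without replacement --- the intended reading is simply that when the required size reaches $N$ one computes the exact Hessian, so \eqref{err_hess} holds trivially; and the variance proxy in the inequality is the norm of the \emph{sum} of the matrix variances, so one should note that it is bounded by $|{\cal D}_k|\max_j\|\mathbb{E}[Z_j^2]\|$ before the normalization cancels the extra factor, which is what your displayed tail bound already reflects.
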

\begin{proof} See \cite[Theorem 7.2]{bmgt}.
\end{proof}
\vskip 5pt
We hereafter assume the existence of $\overline{\kappa}_\phi\ge 0$ such that
\beqn{barkphi}
\sup_{x\in\R^n} \kappa_\phi(x)\le \overline{\kappa}_\phi,
\eeqn
yielding  $\sup_{x\in \R^n}\|\nabla^2 f(x)\|\le \overline{\kappa}_{\phi}$ and Assumption \ref{ass_Bk} with $ \kappa_B=\overline{\kappa}_{\phi}$.

We first give  deterministic results, namely   properties which are valid independently 
from Assumption  \ref{ass_Dk}  on $B_k$, now   guaranteed with probability $1-\bar \delta$ by Lemma \ref{dkbounds}. 
In the following theorem the only requirement on $B_k$ is the boundness of its norm,  i.e. Assumption  \ref{ass_Bk}; concerning the trial step $s_k$,
the Cauchy condition  \eqref{cauchy1} is assumed.\footnote {This result is valid independently  from the specific form of $f$  considered in this section, provided that the norm of the Hessian of $f$ is bounded in an open convex set containing all the sequence $\{x_k\}$ and 
Assumptions \ref{ass_Bk} holds.}
\vskip 5pt
\begin{theorem}
Let $f\in C^2(\mathbb{R}^n)$. Suppose  that $f$ in \eqref{Pb} is lower bounded by $f_{\mbox{\scriptsize low}}$, Assumption
\ref{ass_phi}, conditions \eqref{cauchy1} and  \eqref{barkphi}  hold.   Then,
\begin{itemize}
\item[i)] Given   $\epsilon>0$, Algorithm \ref{ARCalgonew} takes at most   $O(\epsilon^{-2})$ successful  iterations to satisfy $\|\nabla f(x_k)\|<\epsilon$.  
\item[ii)] $\| \nabla f(x_k) \|\rightarrow 0$, as $k\rightarrow \infty$ and therefore all the accumulation points of the sequence $\{x_{k}\}$, if any, are first-order stationary points.
\end{itemize}
 \begin{itemize}
\item[iii)]  If  $\{x_{k_i}\}$  is a subsequence of iterates converging to some $x^*$ such that $\nabla f^2(x^*)$ is definite positive, then $x_k\rightarrow x^*$ 
as $k\rightarrow \infty$.
\end{itemize}
\end{theorem}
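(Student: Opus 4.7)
The plan builds on the Cauchy decrease inequality \eqref{cT} (from \cite[Lemma 2.1]{ARC1}), available here thanks to \eqref{cauchy1}, together with the global bounds furnished by \eqref{barkphi}: since $\|\nabla^2\phi_i(x)\|\le\overline{\kappa}_\phi$ for all $i,x$, we have $\|\nabla^2 f(x)\|\le\overline{\kappa}_\phi$ and $\|B_k\|\le\overline{\kappa}_\phi$, so Assumption~\ref{ass_Bk} holds with $\kappa_B=\overline{\kappa}_\phi$. The main novelty relative to Section~\ref{complexity} is that, without Hessian Lipschitz continuity, Lemma~\ref{Lsigmamax} is unavailable, and the required bound on $\sigma_k$ must be recovered in an $\epsilon$-dependent form.

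For part~(i), I would combine the Cauchy-type model decrease \eqref{cT} with an upper bound on $\sigma_k$ derived as follows. The Taylor remainder under bounded $\nabla^2 f$ and $B_k$ satisfies $|f(x_k+s_k)-T_2(x_k,s_k)|\le \overline{\kappa}_\phi\|s_k\|^2$ and, together with \eqref{diffT}, gives $1-\rho_k<3\overline{\kappa}_\phi/(\sigma_k\|s_k\|)$. Meanwhile \eqref{tc} and \eqref{derivm} imply $\sigma_k\|s_k\|^2+\overline{\kappa}_\phi\|s_k\|\ge(1-\theta)\|\nabla f(x_k)\|$, so $\sigma_k\|s_k\|$ grows at least as $\sqrt{\sigma_k\|\nabla f(x_k)\|}$ once $\sigma_k\|s_k\|\ge\overline{\kappa}_\phi$. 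Whenever $\|\nabla f(x_k)\|\ge\epsilon$, these two ingredients show that the iteration is very successful as soon as $\sigma_k$ exceeds a threshold of order $1/\epsilon$; the update rule \eqref{sigma} then yields $\sigma_k\le\sigma_{\max}(\epsilon)=O(1/\epsilon)$ for every iteration before termination. Plugging this back into the Cauchy decrease produces $f(x_k)-f(x_{k+1})\ge\eta_1 C\epsilon^2$ for each successful $k$ before termination, and telescoping against $f(x_0)-f_{\mathrm{low}}$ delivers the $O(\epsilon^{-2})$ count.

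For part~(ii), I would first use \eqref{diffT} and the acceptance test to obtain $\sum_{k\in\mathcal{S}}\|s_k\|^3<\infty$, hence $\|s_k\|\to 0$ along $\mathcal{S}$. For any $\epsilon_0>0$ the argument of part~(i) shows that only finitely many successful iterations can satisfy $\|\nabla f(x_k)\|\ge\epsilon_0$, otherwise the resulting uniform $f$-drop would violate $f\ge f_{\mathrm{low}}$. Thus $\|\nabla f(x_k)\|\to 0$ along $\mathcal{S}$, which extends to all $k$ since $\nabla f$ is unchanged on runs of unsuccessful iterations. A short separate argument rules out the degenerate case of only finitely many successful iterations with a nonzero limiting gradient: the all-unsuccessful tail drives $\sigma_k\to\infty$, and the $1-\rho_k$ estimate above, together with $\|s_k\|^2=O(\|\nabla f(x_k)\|/\sigma_k)$ for large $\sigma_k$, forces $\rho_k\to 1$, contradicting the tail being entirely unsuccessful.

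For part~(iii), I would invoke \cite[Lemma 4.10]{more}. By part~(ii) and continuity of $\nabla f$, every accumulation point of $\{x_k\}$ is first-order critical; since $\nabla^2 f(x^*)$ is positive definite, $x^*$ is a strict local minimizer, hence isolated among critical points, and therefore an isolated accumulation point of the sequence. Combined with $\|x_{k+1}-x_k\|=\|s_k\|\to 0$ along $\mathcal{S}$ and $\|x_{k+1}-x_k\|=0$ otherwise, the lemma yields $x_k\to x^*$. The main obstacle across all three parts is the $\sigma_k$ bound underpinning part~(i), which has to be established from scratch using only the $\|s_k\|^2$-order Taylor estimate, the Cauchy decrease, and the approximate stationarity condition \eqref{tc}, since Hessian Lipschitz continuity—the standard lever in Lemma~\ref{Lsigmamax}—is not available in the present finite-sum setting.
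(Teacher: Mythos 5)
Your parts (i) and (iii) are essentially sound, and (i) in fact reconstructs in-house what the paper obtains by citation: the paper simply invokes \cite[Lemmas 3.1--3.3, Corollary 3.4]{ARC2} together with the Cauchy decrease \eqref{cT} and the remainder bound $f(x_k+s_k)-T_2(x_k,s_k)\le 2\overline{\kappa}_\phi\|s_k\|^2$, while you rebuild the $\epsilon$-dependent bound $\sigma_k\le O(1/\epsilon)$ from \eqref{tc}, \eqref{derivm} and \eqref{diffT}; that is a legitimate substitute since Step 3 of Algorithm \ref{ARCalgonew} enforces \eqref{tc}. Part (iii) coincides with the paper's argument ($x^*$ isolated limit point, $\|s_k\|\to 0$ on $\mathcal{S}$, and \cite[Lemma 4.10]{more}).

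The gap is in part (ii). Your key step, ``only finitely many successful iterations can satisfy $\|\nabla f(x_k)\|\ge\epsilon_0$, otherwise the resulting uniform $f$-drop would violate $f\ge f_{\mathrm{low}}$'', presupposes a uniform lower bound on the decrease at \emph{every} such iteration, which requires $\sigma_k\le\sigma_{\max}(\epsilon_0)$ at those iterations. But your bound $\sigma_k=O(1/\epsilon)$ was derived under the standing premise that every gradient encountered so far has norm at least $\epsilon$: the mechanism ``$\sigma_k\ge c/\epsilon$ implies very successful'' only caps $\sigma_k$ relative to the \emph{current} gradient norm. In the infinite run, once the iterate reaches a point with tiny gradient $\delta\ll\epsilon_0$, a block of unsuccessful Step-5 iterations there can legitimately push $\sigma$ up to order $1/\delta$, and $\sigma$ decreases only by a factor $\gamma_1$ per very successful iteration afterwards; at a later successful iteration with $\|\nabla f(x_k)\|\ge\epsilon_0$ the regularizer may therefore be arbitrarily large and the Cauchy decrease $\tfrac{\epsilon_0}{6\sqrt 2}\min\bigl(\tfrac{\epsilon_0}{1+\kappa_B},\tfrac12\sqrt{\epsilon_0/\sigma_k}\bigr)$ arbitrarily small. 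In short, part (i) is a hitting-time bound (it controls iterations before the gradient first drops below the tolerance), not a bound on the total number of large-gradient iterations over the whole sequence, and it does not transfer the way you use it. This is exactly where the paper appeals to \cite[Corollary 2.6]{ARC1}, whose proof exploits the uniform continuity of $\nabla f$ (available here through \eqref{barkphi}): between a successful iterate with $\|\nabla f\|\ge 2\epsilon_0$ and the first subsequent iterate with $\|\nabla f\|<\epsilon_0$, the successful steps must cover a distance at least $\epsilon_0/\overline{\kappa}_\phi$, and combining the Cauchy decrease with an upper bound on $\|s_k\|$ (obtainable here from \eqref{tc} and \eqref{derivm}) shows the objective drops by at least a fixed multiple of $\epsilon_0$ per unit distance, hence by a fixed amount per ``band crossing''; only finitely many crossings can then occur, which together with $\liminf_k\|\nabla f(x_k)\|=0$ from (i) gives the limit. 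Your proof needs this crossing argument (or the citation) inserted. A minor further point: in your degenerate all-unsuccessful-tail case you need the lower bound $\|s_k\|\ge c\sqrt{\|\nabla f(x_k)\|/\sigma_k}$ (which indeed follows from \eqref{derivm} once $\sigma_k$ is large), not the stated upper bound $\|s_k\|^2=O(\|\nabla f(x_k)\|/\sigma_k)$, in order to conclude $\sigma_k\|s_k\|\to\infty$ and $\rho_k\to 1$.
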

\begin{proof} 
$i)$. The claim follows from Lemma 3.1--3.3 and Corollary 3.4 in \cite{ARC2}. In fact,
despite the acceptance criterion in \cite{ARC2} is \eqref{pi} instead of   \eqref{rho},  
we can rely on the proof of \cite[Lemma 3.2]{ARC2} thanks to (\ref{cT}) and   considering that 
{
\[
f(x_k+s_k)-T_2(x_k,s_k)\le 2\overline{\kappa}_{\phi}\| s_k \|^2, \quad  k\ge 0.
\]}

$ii)$ The sub-optimal complexity result in Item $i)$ guarantees that $\liminf_{k\rightarrow \infty} \|\nabla f(x_k)\|=0$ and that
the number of successful iterations is not finite. 
Moreover, $\lim_{k\rightarrow \infty} \|\nabla f(x_k)\|=0$ follows by Assumption \ref{ass_phi} {, \eqref{barkphi}}
and \cite[Corollary $2.6$]{ARC1}.

$iii)$ Proceeding as in Theorem \ref{fop} we obtain  \eqref{limsk}. 
Since $\nabla^2 f(x^*)$ in positive definite, $x^*$ is an isolated limit point;  consequently,  \eqref{limsk} and 
Lemma  \cite[Lemma 4.10]{more} yield the claim.
\end{proof}
\vskip 5pt

Focusing on the optimal complexity result, 
 we observe that Algorithm  \ref{ARCalgonew} requires at most $O(\epsilon^{-3/2})$ iterations to satisfy  
$\|\nabla f(x_k)\|\le \epsilon$ with probability  $1-\delta$, $\delta \in (0,1)$, provided that  the sample size is chosen accordingly to  \eqref{minbound}  and  
$\bar\delta$ is suitable chosen.    
In fact, let $\mathcal{E}_i $ be the event: ``the relation $\|  \nabla^2 f(x_i)-B_i \| \le C_i$ holds at iteration $i$, $1\le i\le k$'',
and $\mathcal{E}(k)$ be the event: ``the relation $\|  \nabla^2 f(x_i)-B_i \| \le C_i$ holds  for the entire $k$ iterations''.
If  the events $\mathcal{E}_i $ are independent,  then 
due to \eqref{err_hess} 
\[
Pr(\mathcal{E}(k))\equiv Pr\left(\bigcap_{i=1}^k \mathcal{E}_i\right)=(1-\bar{\delta})^k.
\]
Thus, requiring  that the event $\mathcal{E}(k)$ occurs with   probability  $1-\delta$, we obtain
\[
Pr(\mathcal{E}(k))=(1-\bar{\delta})^k=1-\delta, \quad \mbox{i.e.,} \quad \bar{\delta}=1-\sqrt[k]{1-\delta}=O\left(\frac{\delta}{k}\right).
\]
Taking into account the iteration complexity, we set  $ k=O\bigl(\epsilon^{-3/2}\bigr)$ and  deduce the following choice of $\bar \delta$: 
\begin{equation}
\label{delta}
\bar{\delta}=O(\delta \epsilon^{3/2}).
\end{equation}
Summarizing, choosing, at each iteration,  $\bar{\delta}$ according to \eqref{delta} and the sample size according to \eqref{minbound}, 
the complexity result in Theorem \ref{TComplexityExact} holds with probability of success $1-\delta$.
We underline that the  resulting  per-iteration failure probability $\bar \delta$  is not too demanding  in what concerns the sample size, 
because it influences only the logarithmic factor   in \eqref{minbound}, see \cite{Roosta}.  

  Observe that   \eqref{minbound} and  $C_k=\alpha (1-\theta)\|\nabla f(x_k)\|$ yield $ |{{\cal D}_k}| =O(\|\nabla f(x_k)\|^{-2})$ 
as long as $N$ is large enough so that  full sample size is not reached. Hence, in the general case, 
$|{\cal D}_k|$ is expected to grow along the iteration and reach values of order $\epsilon^{-2}$ at termination.

In the specific case where  $k \in {\cal S}\cup {\cal U}_{1}$,  $C_k=\alpha (1-\theta)\|\nabla f(x_k)\|$
and $\lambda_{\min}(B_k)\ge \underline \lambda$  for some positive $\underline \lambda$, 
using \eqref{bounds} and Lemma \ref{sk} we obtain
$$
\|\nabla f(x_k)\| \ge \frac{\underline \lambda}{1+\theta} \|s_k\| \ge \frac{\sqrt{\zeta} \underline \lambda}{1+\theta} \sqrt {\|\nabla f(x_{k+1})\|}.
$$
Then $\|\nabla f(x_k)\| \ge \frac{\sqrt{\zeta\epsilon} \underline \lambda}{1+\theta} $, provided that the algorithm does not terminate at iteration $k+1$; consequently,
$|{\cal D}_k|$ is expected to grow along such iterations and reach values of order $\epsilon^{-1}$ eventually. 
On the other hand,  the sample size for Hessian approximation is expected to be small with respect to $O(\epsilon^{-1})$ when $C_k$ is set equal to the arbitrar constant accuracy $C$, hence the iterations at which $C_k=C$ can be neglected within this analysis. Taking into account that  ${\cal U}_{2}$ does not depend on $\epsilon$ we can claim  that,
with probability 
$1-\delta$, at most  $O(\epsilon^{-5/2})$  $\nabla^2 \phi_i$-evaluations are required to compute an $\epsilon$-approximate first order point, provided that $\lambda_{\min}(B_k)\ge \underline \lambda$  at all iterations where  $C_k=\alpha (1-\theta)\|\nabla f(x_k)\|$. 
This is ensured for the subclass of problems where  functions $\phi_i$ are strongly convex. Problems of this type arise, for instance, in classification procedures. For this subclass of problems,
Theorem \ref{sop1}, Item $ii)$ also ensures that, for $k$ sufficiently large, say $k\ge \bar k$, 
with probability $(1-\bar\delta)^{k_o}$  there exists $M>0$ such that 
$$
\|x_{k+1}-x^*\|\le M \|x_k-x^*\|^2,\quad k=\bar k, \ldots, \bar k+k_o-1,
$$
where $x^*$ the unique minimizer.
Specifically, proceeding as in \cite[Theorem 2]{Roosta_in} and   denoting with  $\mathcal{E}_i $ 
the event: ``the relation $\|\nabla^2 f(x_i)-B_i \| \le C_i$ holds at iteration $i$, $i\ge  \bar k$'', 
we have that the overall success   probability in consecutive $k_o$ iterations is 
$$
 Pr\left(\bigcap_{i=\bar k}^{\bar k+k_o-1} \mathcal{E}_i\right)=(1-\bar \delta)^{k_o},
$$ 
which concludes our argument.

\section{Related work}\label{cfr}
Variants of ARC based on suitable approximations of the gradient and/or the Hessian of $f$ have been 
discussed in a few recent lines of work reviewed in this section. 
Besides the algorithm in  \cite{ARC1, ARC2, CGToint},  which
employs approximations for  the Hessian  and is suited for  a generic nonconvex function $f$,
works \cite{Cin, CS, kl, Roosta,Roosta_inexact} propose  variants of the algorithm  given in \cite{ARC1} 
where the gradient and/or the Hessian approximations 
can be performed via subsampling techniques \cite{bbn, review} and are applicable  to the relevant 
class of large-scale finite-sum minimization (\ref{finite_sum}) arising in machine learning; probabilistic/stochastic complexity and 
convergence analysis is carried out.  

Cartis et al. \cite{ARC1, ARC2, CGToint} analyze ARC framework under varying assumptions on the Hessian approximation $B_k$ 
and establish optimal and sub-optimal worst-case iteration bounds for first- and second-order optimality.
First-order complexity was shown to be of $O(\epsilon^{-2})$ iterations under Assumption
\ref{ass_Bk} and, as mentioned in Section \ref{ARCbase}, of $O(\epsilon^{-3/2})$ iterations  when, in addition, $B_k$
resembles the true Hessian and condition  (\ref{AM4}) is satisfied.

Kohler et al. \cite{kl} propose and study a variant of ARC algorithm suited for finite-sum minimization not necessarily convex.
A subsampling scheme for the gradient and the Hessian of $f$ is applied while maintaining first-order complexity
of $O(\epsilon^{-3/2})$  iterations.  The sampling scheme provided guarantees that  the subsampled gradient $g(x_k)$ satisfies
\begin{equation}\label{grad_kholer}
\|\nabla f(x_k)-g(x_k)\|\le M\|s_k\|^2, \quad \forall k\ge 0, \, M>0,
\end{equation}
with prefixed probability, and thesubsampled Hessian $B_k$ satisfies condition  (\ref{AM4}) with prefixed probability. 
As  specified in Section \ref{ARCbase},  condition \req{AM4} is enforced via \req{AMLK} and since the steplength
can be determined only after $g(x_k)$ and $B_k$ are formed, the steplength at the previous iteration is taken

Cartis and Scheinberg \cite{CS} analyze a probabilistic cubic regularization variant where conditions \eqref{grad_kholer} and  (\ref{AM4}) are satisfied with sufficiently high probability.
Enforcing such conditions in a practical setting  calls for an (inner) iterative process which requires 
a step computation at each repetition;  in the worst-case  derivatives accuracy may reach order $O(\epsilon)$ at each iteration (see also \cite{bmgt}).

 As mentioned in Section \ref{ARCbase}, Xu et al. \cite{Roosta} develop and study  a version of ARC algorithm  where a major modification on the level of resemblance 
between $\nabla^2 f(x_k)$ and $B_k$ is made over (\ref{AM4}).
Matrix $B_k$ is supposed to satisfy Assumption \ref{ass_Bk} and 
\begin{equation}\label{accuracyRoosta}
\|(\nabla^2 f(x_k)-B_k)s_k\|\le \mu \|s_k\|, \quad \mu \in (0,1),
\end{equation}
and the latter condition can be enforced building $B_k$ such that $\| \nabla^2 f(x_k)-B_k \|\le \mu  $.
Non convex finite-sum minimization is the motivating application for the proposal, and uniform and non-uniform sampling strategies are provided
to construct matrices $B_k$ satisfying $\| \nabla^2 f(x_k)-B_k \|\le \mu  $ with prefixed probability.
In particular, unlike the rule in \cite{kl}, 
the rule for choosing the sample size at iteration $k$ does not depend on the step $s_k$  which is not available when $B_k$ has to be built.
Worst-case iteration count of order $\epsilon^{-3/2}$ is shown when  $\mu=O(\epsilon)$, while sub-optimal 
worst-case iteration count of order $\epsilon^{-2}$ is achieved if $\mu=O(\sqrt{\epsilon})$.
Note that the accuracy requirement
on $B_k$ is fixed along the iterations   and depends on the accuracy requirement on the gradient's norm, that is on the gradient's norm at the final iteration.  
Then, when  the  Hessian  of problem (\ref{finite_sum}) is approximated  via subsampling  with accuracy $\epsilon$, 
$O(\epsilon^{-2})$  evaluations of matrices  $\nabla^2 \phi_i$ are needed at each iteration,
assuming $N$ sufficiently large.
Additionally, the use of approximate gradient via subsampling 
is addressed in \cite{Roosta_inexact}.

Chen et al. \cite{Cin} propose an  ARC procedure for convex optimization via random sampling. 
Function $f$ is convex and defined as  finite-sum (\ref{finite_sum}) of possibly nonconvex functions. 
Semidefinite positive subsampled approximations $B_k$ satisfying 
$\| \nabla^2 f(x_k)-B_k \|\le \mu_k$ , $  \mu_k \in (0,1)$, are built with a prefixed probability.
Iteration complexity of order $O( \epsilon^{-1/3})$ is proved with respect to the fulfillment of  
condition $f(x_k)-f(x^*)\le \epsilon$, 
$x^*$ being the global minimum of (\ref{finite_sum}); the 
scalar $\mu_{k}$ is updated as $\mu_{k+1}=O(\min(\mu_k, \|\nabla f(x_k)\|))$, 
and the model $m(x_k, s, \sigma_k)$ is minimized on a subspace of $\mathbb{R}^n$ imposing the strict condition 
$\| \nabla_s m(x_k,s_k,\sigma_k)\| \le \theta\min( \|\nabla f(x_k)\|, \|\nabla f(x_k)\|^3, \|s_k\|^2)$, $\theta\in (0,1)$.

Summarizing, our proposal differs from the above works in the following respects.  
In  \cite{kl} the upper bound in (\ref{AMLK})  is replaced by a bound computed using 
information from the previous iteration and no check on the fulfillment of \req{AMLK} is made, while in  \cite{CS}  the error in Hessian approximation is dynamically reduced  to fulfill \req{AMLK};
on the contrary our  accuracy requirement $C_k$ is computable and  condition \req{AM4} is satisfied at every successful iteration
and at any unsuccessful iteration detected in Step 5 without deteriorating computational complexity.
Our proposal  improves  upon \cite{Roosta, Roosta_inexact} in the construction of $B_k$
as the  level of resemblance between $\nabla^2 f(x_k)$ and $B_k$ is not maintained fixed along iterations but adaptively chosen, remaining less stringent than the first-order $\epsilon$ tolerance when the constant accuracy $C$ is selected by the adaptive procedure or, otherwise, whether the current gradient's norm is sufficiently high (see, e.g., \eqref{Dk}--\eqref{Ckbound}); 
it improves  upon \cite{Cin}   as  the prescribed accuracy on $B_k$ (and the sample size) may reduce at some iteration,
the ultimate accuracy on $\| \nabla_s m(x_k,s_k,\sigma_k)\|$  is milder,  
and  our complexity results  are optimal for nonconvex problems while the analysis in \cite{Cin} is limited to convex problems.

\section{Numerical results}\label{numerical}
 In this section we present  the performance of our ARC Algorithm \ref{ARCalgonew} and show that it can be computationally more convenient than ARC variants in the literature. 
Our numerical validation is based on  inexact Hessians built via  uniform subsampling and 
rule (\ref{minbound})  for choosing  the sample size. The results obtained indicate that
suitable levels of accuracy in Hessian approximation and careful adaptations of rule  (\ref{minbound})   improve efficiency of  existing procedures exploiting subsampled Hessians.}
Experiments  are performed on  
nonconvex finite-sum problems arising within the framework of binary classification. 

Given the training data $\{a_i,y_i\}_{i=1}^N$ where $a_i\in\mathbb{R}^d$ 
and $y_i\in\{0,1\}$ represent the $i$-th feature vector and label respectively, we minimize
the empirical risk using a least-squares loss $f$ with sigmoid function. The minimization problems then takes the form:
\begin{equation}
\label{minloss}
\min_{x\in\mathbb{R}^d} f(x)= \min_{x\in\mathbb{R}^d}  \frac{1}{N}\sum_{i=1}^N{\phi_i(x)}= \min_{x\in\mathbb{R}^d}\frac 1 N \sum_{i=1}^N \left( y_i-\sigma\left(a_i^T x\right) \right)^2,
\end{equation}
with the sigmoid function
\[
\sigma(z)=\frac{1}{1+e^{-z}},\qquad z\in\R,
\]
used as a model for predicting the values of the labels. 
The gradient and the Hessian of the component functions $\phi_i(x)$, $i\in\{1,...,N\}$, in \eqref{minloss} take the form: 
\begin{eqnarray}
&&\nabla \phi_i(x)=-2 e^{-a_i^Tx}\left(1+e^{-a_i^Tx}\right)^{-2}\left(y_i-\left(1+e^{-a_i^Tx}\right)^{-1}\right)a_i,\label{der1phi}\\
& & \nabla^2 \phi_i(x)=-2 e^{-a_i^Tx}\left(1+e^{-a_i^Tx}\right)^{-4}\left(y_i\left(\left(e^{-a_i^Tx}\right)^2-1\right)+1-2e^{-a_i^Tx}\right)a_ia_i^T.\label{der2phi}
\end{eqnarray}
Problem \eqref{minloss} can be seen as a neural network without hidden layers and  zero bias and we  refer to $f$ as the training loss. Trivially it has form (\ref{finite_sum}) with $n=d$

For each dataset, a number $N_T$ of testing data $\{\bar a_i,\bar y_i\}_{i=1}^{N_T}$ is used to validate the computed model
and the testing loss measured as 
$ 
\displaystyle\frac{1}{N_T} \sum_{i=1}^{N_T} \left(\bar y_i-\sigma\left(\bar a_i^T x\right) \right)^2.
$

Implementation issues concerning the considered  procedures are introduced in Section \ref{Simpl}. In Sections  \ref{sint} -\ref{real} we give statistics of our runs. 
We test different ARC variants and rules for choosing the sample size of Inexact Hessians and we  perform two sets of experiments. 
First, in Section  \ref{sint}  we compare ARC variants with optimal complexity  on  a set of synthetic datasets  from \cite{bbn}.
Algorithm \ref{ARCalgonew} is compared with versions of ARC  employing: ({\it i}) exact Hessians;  ({\it ii})
inexact Hessians $B_k$ with  accuracy requirement (\ref{diffck}) and $C_k=\epsilon$, $\forall k\ge 0$ \cite{Roosta, Roosta_2p}; 
({\it iii}) inexact Hessians $B_k$ with accuracy requirement  \eqref{AMLK}
implemented as suggested in \cite{kl}, i.e., the unavailable information  $\|s_k\|$ on the right-hand side is replaced with $\|s_{k-1}\|$, for $k>0$.
 \noindent
Second,  in Section \ref{real} we compare a suboptimal variant of our adaptive strategy
with ARC procedure where inexact Hessians are built using a fixed small sample size.
This experiments are motivated by pervasiveness of  prefixed small  sample sizes  
in practical implementations. In fact, inequality \eqref{minbound} yields to full sample 
when high accuracy is imposed, i.e.  when  $C_k$ is sufficiently small, and 
sample sizes $|{\cal D}_k|$ equal to a  prefixed fraction of $N$ are often employed in literature even though 
first-order  complexity becomes  $O(\epsilon^{-2})$ \cite{bkkj, bkm, bbn, Roosta_2p}.


\subsection{ Implementation issues}\label{Simpl}

The implementation of the main phases of ARC variants is given in this section.

The cubic regularization parameter is initialized as $\sigma_0=10^{-1}$ and its minimum value is $\sigma_{\min}=10^{-5}$. 
The parameters $\eta_1$, $\eta_2$, $\gamma_1$, $\gamma_2$, $\gamma_3$ and $\alpha$  are fixed as
\[
\eta_1=0.1,\quad \eta_2=0.8,\quad\gamma_1=0.5,\quad \gamma_2=1.5,\quad \gamma_3=2,\quad \alpha=0.1,
\]
while the failure probability $\overline{\delta}$ in \eqref{err_hess} is set equal to $0.2$. The initial guess is the zero vector $x_0=(0,...,0)^T\in\mathbb{R}^d$ in all runs. 

The minimization of the cubic model in Step $3$ of Algorithm \ref{ARCalgonew} is performed by 
the Barzilai-Borwein gradient method \cite{bb} combined with a nonmonotone linesearch following the proposal in \cite{blms}.
The major per iteration cost of such Barzilai-Borwein process is one Hessian-vector product, needed to compute the gradient of the cubic model. 
The threshold used in the termination criterion (\ref{tc}) is $\theta_k=0.5,\, k\ge 0$.

As for the termination criteria for ARC methods, we imposed a maximum of $500$
iterations and we declared a successful termination  when one of the two following conditions is met:
\[
\|\nabla f(x_k)\|\le \epsilon,\quad |f(x_k)-f(x_{k-1})|\le 10^{-6}|f(x_k)|,\quad \epsilon=10^{-3}.
\]

In order to measure  the computational cost, as in \cite{bbn} we use the number of Effective Gradient Evaluations (EGE), that is the sum of function and Hessian-vector product evaluations. This is a pertinent measure since the major cost in the evaluation of each component function $\phi_i$, $1\le i\le N$, at $x\in\mathbb{R}^d$ consists in the computation of the scalar product $a_i^T x$. Once evaluated, this scalar product can be reused for obtaining $\nabla \phi_i(x)$, while the computation of $\nabla^2 \phi_i(x)$ times a vector $v\in\mathbb{R}^d$ requires the scalar product $a_i^Tv$ and it is as expensive as  one $\phi_i(x)$ evaluation (see \eqref{der1phi}--\eqref{der2phi}). Consequently, each full Hessian-vector product costs as one function or gradient evaluation. 
When $|\mathcal{D}_k|$ samples are used for the Hessian approximation $B_k$, the cost of one matrix-vector product of the form $B_k v$ is counted as $ |\mathcal{D}_k|/N$ EGE. 

 The algorithms were implemented in Fortran language and run on an Intel Core i5, $1.8$ GHz $\times~1$ CPU, $8$ GB RAM.

\subsection{Synthetic datasets}\label{SecSynthetic}\label{sint}
The first class of databases we consider is a set of synthetic datasets  from \cite{bbn}, firstly proposed in \cite{mcfs2013}.
These datasets have been constructed so that    Hessians have condition numbers of  order up to $10^7$ and a wide spectrum of the 
eigeinvalues  and allow testing on   moderately  ill-conditioned problems.  
We  scaled them,  in order to have entries in the interval $[0,1]$, as follows. 
Let $D\in \Re^{(N+N_T)\times d}$ be the matrix containing the training and testing features of the original dataset, that is 
\[
e_i^T D=a_i^T \ \mbox{ for } i\in\{1,...,N\},\quad e_{N+i}^T D=\bar a_i^T \ \mbox{ for } i\in\{1,...,N_T\},
\]
and let $m_j=\min_{i\in\{1,...,N+N_T\}}D_{ij}$ and $M_j=\max_{i\in\{1,...,N+N_T\}}D_{ij}$, for $j=1,\ldots,d$.
Then, matrix $D$ is scaled as
\[
{D}_{ij}\eqdef\frac{D_{ij}-m_j}{M_j-m_j},\ \mbox{ for } i\in\{1,...,N+N_T\}, \, \,  j\in\{1,\ldots,d\}.
\] 

The computation of the matrix $B_k$ accordingly to \eqref{minbound} involves the constant
\[
\kappa_{\phi}(x_k)=\max_{i\in\{1,...,N\}}\left\{2e^{-a_i^Tx_{k}}\left(1+e^{-a_i^Tx_{k}}\right)^{-4}\left|y_i\left(\left(e^{-a_i^Tx_{k}}\right)^2-1\right)+1-2e^{-a_i^Tx_{k}}\right| \|a_i\|^2\right\}.
\]
Since the values $a_i^Tx_k$, $1\le i\le N$, are available from the exact computation of $f(x_k)$, we evaluated $\kappa_{\phi}(x_k)$
at the (offline) extra cost of computing $\|a_i\|^2$, $1\le i\le N$. 

In our implementation of Algorithm  \ref{ARCalgonew} the value of $C$ used in \eqref{bound1}  whenever $\|s_k\|\ge 1$  is such that  $|\mathcal{D}_0|$ computed via \eqref{minbound} with $C_0=C$ satisfies $|\mathcal{D}_0|/N=0.1$. 
We shall hereafter refer to the implementation of  Algorithm  \ref{ARCalgonew}  as \textit{ARC-Dynamic}.
The numerical tests in this section compare  \textit{ARC-Dynamic}, with
the following  variants.
\begin{itemize}
 \item  {\textit{ARC-Full}:}  Algorithm  \ref{ARCalgonew} employing exact Hessians;
 \item {\textit{ARC-Sub}:} Algorithm  \ref{ARCalgonew} employing inexact Hessian $B_k$ and accuracy $C_k=\epsilon$, for all $k\ge 0$, 
 i.e.
\begin{equation}\label{roosta}
\|\nabla^2 f(x_k)-B_k\|\le \epsilon,\quad \forall k\ge 0,
\end{equation}
as suggested in \cite{Roosta, Roosta_2p};
\item   {\textit {ARC-KL}: }Algorithm  \ref{ARCalgonew}  employing inexact Hessian $B_k$ and accuracy $C_k=\chi \|s_{k-1}\|$, for all $k\ge 1$.
In other words, we use the accuracy requirement \eqref{AMLK} 
replacing, as suggested in \cite{kl}, the unavailable information  $\|s_k\|$ in the righthand side of  \eqref{AMLK}  with the norm of the step $s_{k-1}$, i.e.,
\begin{equation}\label{AMLK_impl}
\|\nabla^2 f(x_k)-B_k\|\le \chi \|s_{k-1}\|,\quad \forall k\ge 1.
\end{equation}

To make a fair comparison with \textit{ARC-Dynamic}, the sample size $|\mathcal{D}_0|$ is set equal to $10\%$   of the number of
samples, since the first step has not been computed yet. 
Moreover, $\chi$ is chosen so that the sample size $|\mathcal{D}_1|$ resulting from  \eqref{minbound} with $C_1=\chi\|s_0\|$  is  $10\%$ of the total number of
samples.
\end{itemize}
\vskip 5pt

The synthetic datasets are listed in Table \ref{TableSynth}. For each dataset,
the number $N$ of training samples, the feature dimension $d$ and the testing size $N_T$ are reported.  
We also display the  $2$-norm condition number $cond$  
of the Hessian matrix at the approximate first-order optimal point (computed with ARC method,  exact Hessian  and stopping tolerance $\epsilon=10^{-3}$) and the value  of the scalar $C$ selected. 

\begin{small}
\begin{table}[h] 
\begin{center}
\begin{tabular}{cccccc}
\toprule
Dataset & Training~$N$ & $d$  & Testing $N_T$ & $cond$  & $C$ \\  \hline
\midrule
Synthetic1 &   9000 & 100 & 1000 &  $2.5\cdot10^4$ &$1.0101$ \\
Synthetic2 &   9000 & 100 & 1000 &  $1.4\cdot10^5$ &$1.0343$ \\
Synthetic3 &   9000 & 100 & 1000 &  $4.2\cdot10^7$ &$1.0406$ \\
Synthetic4 &   90000 & 100 & 10000 &  $4.1\cdot10^4$ &$0.2982$\\
Synthetic6 &   90000 & 100 & 10000 &  $5.0\cdot10^6$ & $0.3184$\\
\bottomrule
\end{tabular}
\caption{Synthetic datasets. Number  of training samples ($N$),  feature dimension  ($d$),  number of testing samples ($N_T$),
$2$-norm condition number  of the Hessian matrix at computed solution ($cond$),
scalar $C$ used in forming Hessian estimates ($C$).}\label{TableSynth}
\end{center}
\end{table}
\end{small}


In Table  \ref{TableSynthALL} we  report the results on all the synthetic datasets obtained with 
\textit{ARC-Dynamic} and  values $C$ as in Table \ref{TableSynth}.  
Since the selection of the subsets $\mathcal{D}_k$ is made randomly (and uniformly) at each iteration,  statistics 
in the forthcoming tables are averaged over $20$ runs. We display:
the total number of iterations ({\rm n-iter}), the value of EGE at termination ({\rm EGE}), the worst ({\rm Save-W}), best ({\rm Save-B}) and mean ({\rm Save-M}) percentages of savings obtained by  \textit{ARC-Dynamic}  with respect to \textit{ARC-Sub} and  \textit{ARC-KL} in terms of EGE.
 To give more insights, in what follows we focus on Synthetic1 and Synthetic6 as they are  representative of what we have observed in our experimentation. 

In Tables \ref{TableSynth1} and \ref{TableSynth6} we report statistics for these problems  solved  with 
our  algorithm and  constant $C$ different from the value in Table \ref{TableSynth}; we refer to such runs as 
\textit{ARC-Dynamic($C$)}. We  duplicate the results given in  Table  \ref{TableSynthALL}
for sake of readibility.

In Figure \ref{Perf1KL} we additionally show  the decrease 
of the training loss and the testing loss versus  the number of EGE   and  in Figure \ref{GDsyntehticdata} we plot  the gradient norm versus EGE.
In all the Figures we consider    \textit{ARC-Dynamic}, \textit{ARC-Dynamic(C)},   \textit{ARC-Sub} and \textit{ARC-KL}. 
A representative run is considered for each method;
 in Figure  \ref{Perf1KL} we do not plot \textit{ARC-Dynamic$(1.00)$} as it overlaps with \textit{ARC-Dynamic}.

\begin{small}
\begin{table}[h]   
\begin{center}
\begin{tabular}{l|cc|ccc|cccc}
\toprule
Dataset & \multicolumn{2}{c}{\textit{ARC-Dynamic}} & \multicolumn{3}{c}{\textit{ARC-Sub}}&  \multicolumn{3}{c}{\textit{ARC-KL}} \\  
&  n-iter  & EGE & Save-W & Save-B & Save-M & Save-W & Save-B & Save-M\\ \hline
\midrule
Synthetic1 &   17.2  & 103.7 & 38\% & 53\% & 44\% & -5\% & 43\% & 20\% \\ 
Synthetic2  &   16.7  & 89.5 & 47\% & 63\% & 55\% & -33\% & 50\% & 18\% \\ 
Synthetic3 &   17.1  & 94.6 & 46\% & 61\% & 51\% & -11\% & 47\% & 20\% \\ 
Synthetic4 &   15.6  & 85.3 & 58\% & 62\% & 60\% & -21\% & 22\% & 5\% \\
Synthetic6  &   15.2  & 67.4 & 60\% & 66\% & 63\% & -5\% & 36\% & 16\% \\
\bottomrule
\end{tabular}
\caption{The columns are divided in three different groups. \textit{ARC-Dynamic}: average number of iterations ({\rm n-iter}) and EGE at termination. \textit{ARC-Sub}: worst ({\rm Save-W}), best ({\rm Save-B}) and mean ({\rm Save-M}) percentages of saving obtained by \textit{ARC-Dynamic} over \textit{ARC-Sub} on the synthetic  datasets. \textit{ARC-KL}: worst ({\rm Save-W}), best ({\rm Save-B}) and mean ({\rm Save-M}) percentages of saving obtained by \textit{ARC-Dynamic} over \textit{ARC-KL} on the synthetic  datasets.}   \label{TableSynthALL}
\end{center}
\end{table}
\end{small}

\begin{small}
\begin{table}[h]   
\begin{center}
\begin{tabular}{l|cc|cccc}
\toprule
Method & n-iter   & EGE & Save-W & Save-B & Save-M\\ \hline
\midrule
\textit{ARC-Dynamic} &   17.2  & 103.7 & 38\% & 53\% & 44\% \\ 
\textit{ARC-Dynamic$(0.50)$} &15.4   & 145.4 & 9\% & 29\% & 21\%\\
\textit{ARC-Dynamic$(0.75)$} &16.5   & 112.6 & 27\% & 46\% & 39\%\\
\textit{ARC-Dynamic$(1.00)$}  &16.8   & 104.0 & 36\% & 53\% & 43\%\\
\textit{ARC-Dynamic$(1.25)$} &18.7   & 115.1 & 26\% & 54\% & 37\%\\
\bottomrule
\end{tabular}
\caption{Synthetic1 dataset. Average number of iterations ({\rm n-iter}),   EGE,
and worst ({\rm Save-W}), best ({\rm Save-B}) and mean ({\rm Save-M}) percentages of saving obtained by \textit{ARC-Dynamic} over \textit{ARC-Sub}.}   \label{TableSynth1}
\end{center}
\end{table}
\end{small}

\begin{small}
\begin{table}[h]   
\begin{center}
\begin{tabular}{l|cc|cccc}
\toprule
Method & n-iter &  EGE & Save-W & Save-B & Save-M\\ \hline
\midrule
\textit{ARC-Dynamic} &   15.2  & 67.4 & 60\% & 66\% & 63\% \\
\textit{ARC-Dynamic$(0.25)$} &15.1   & 78.9 & 53\% & 59\% & 57\%\\ 
\textit{ARC-Dynamic$(0.50)$} &15.9   & 58.5 & 57\% & 70\% & 68\%\\
\textit{ARC-Dynamic$(0.75)$} &16.6    & 61.5 & 54\% & 73\% & 66\%\\
\textit{ARC-Dynamic$(1.00)$} &16.8   & 64.1 & 46\% & 74\% & 65\%\\
\bottomrule
\end{tabular}
\caption{Synthetic6 dataset. Average number of iterations ({\rm n-iter}),  EGE, and worst ({\rm Save-W}), best ({\rm Save-B}) and mean ({\rm Save-M}) percentages of saving obtained by \textit{ARC-Dynamic} over 
 \textit{ARC-Sub}.}   \label{TableSynth6}
\end{center}
\end{table}
\end{small}
\noindent

\begin{figure}[h]
\centering
\includegraphics[width=%
0.49\textwidth]{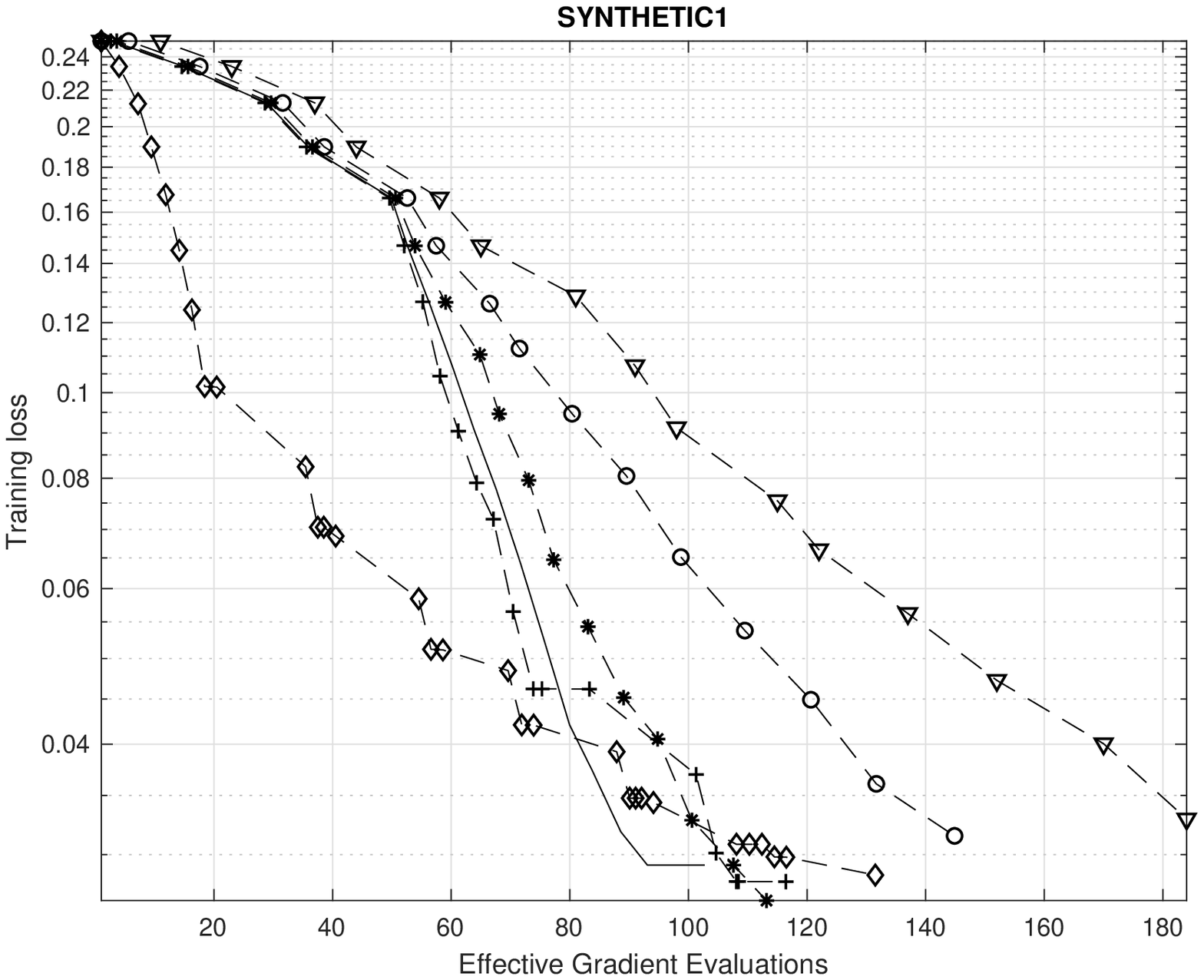}
\includegraphics[width=%
0.49\textwidth]{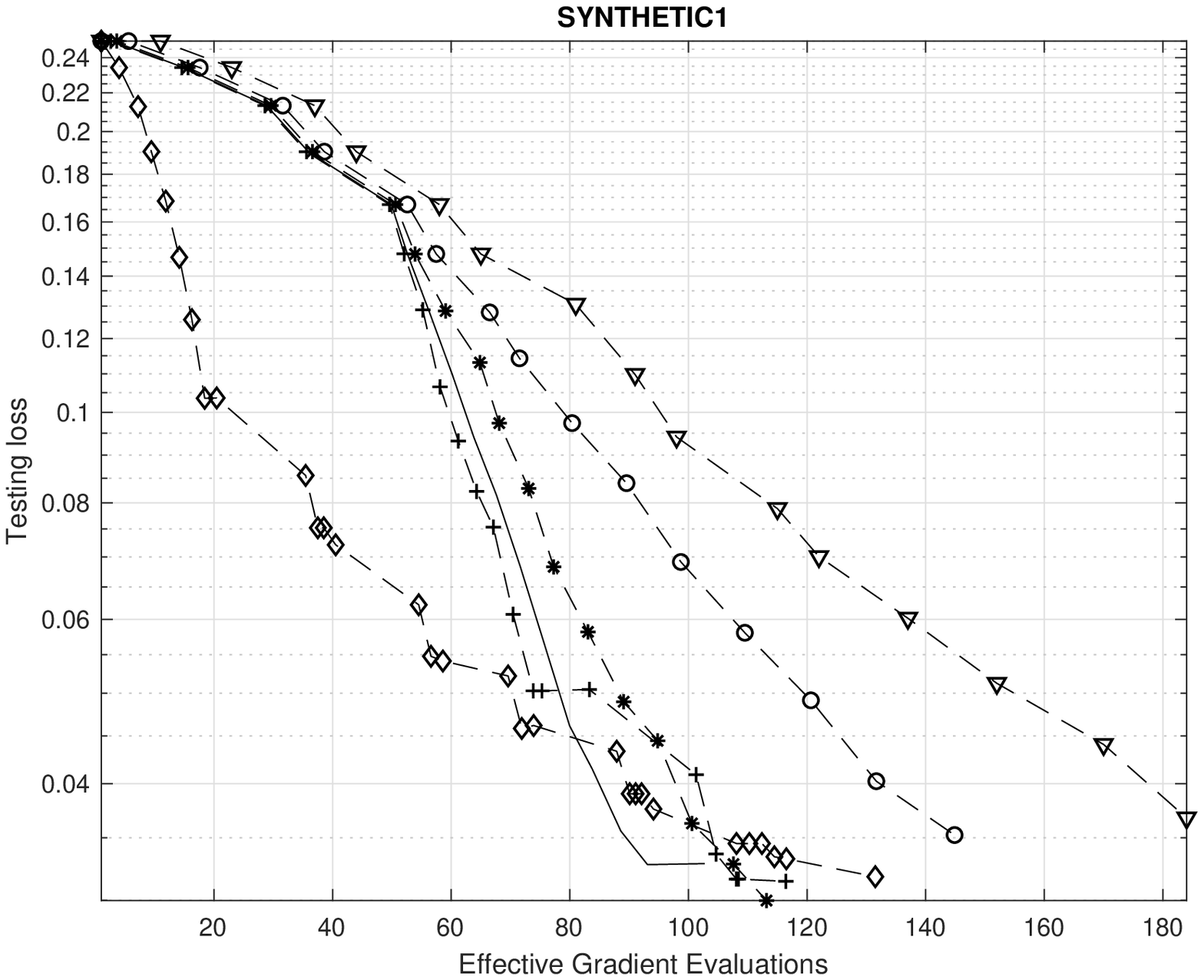}\
\includegraphics[width=%
0.49\textwidth]{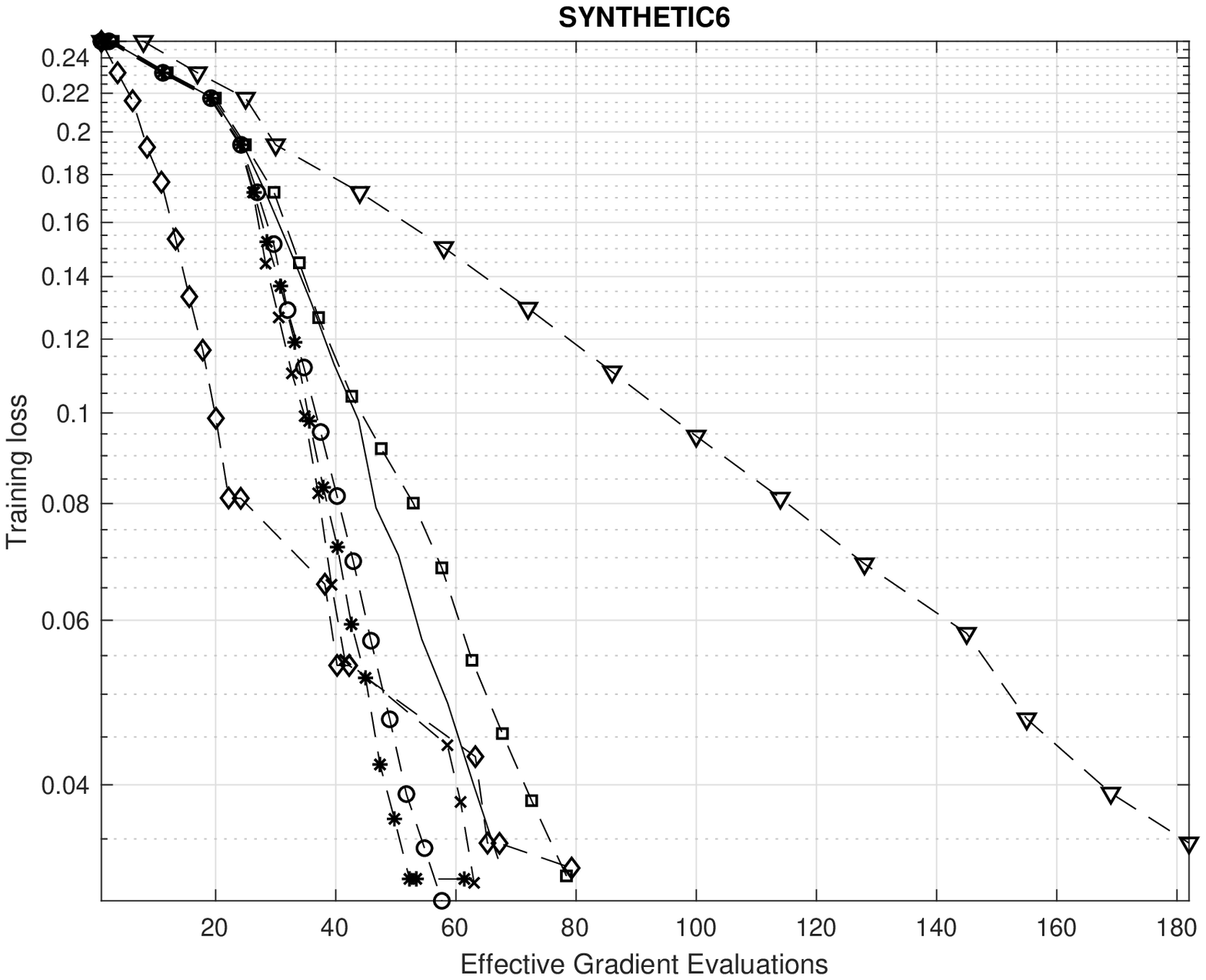}
\includegraphics[width=%
0.49\textwidth]{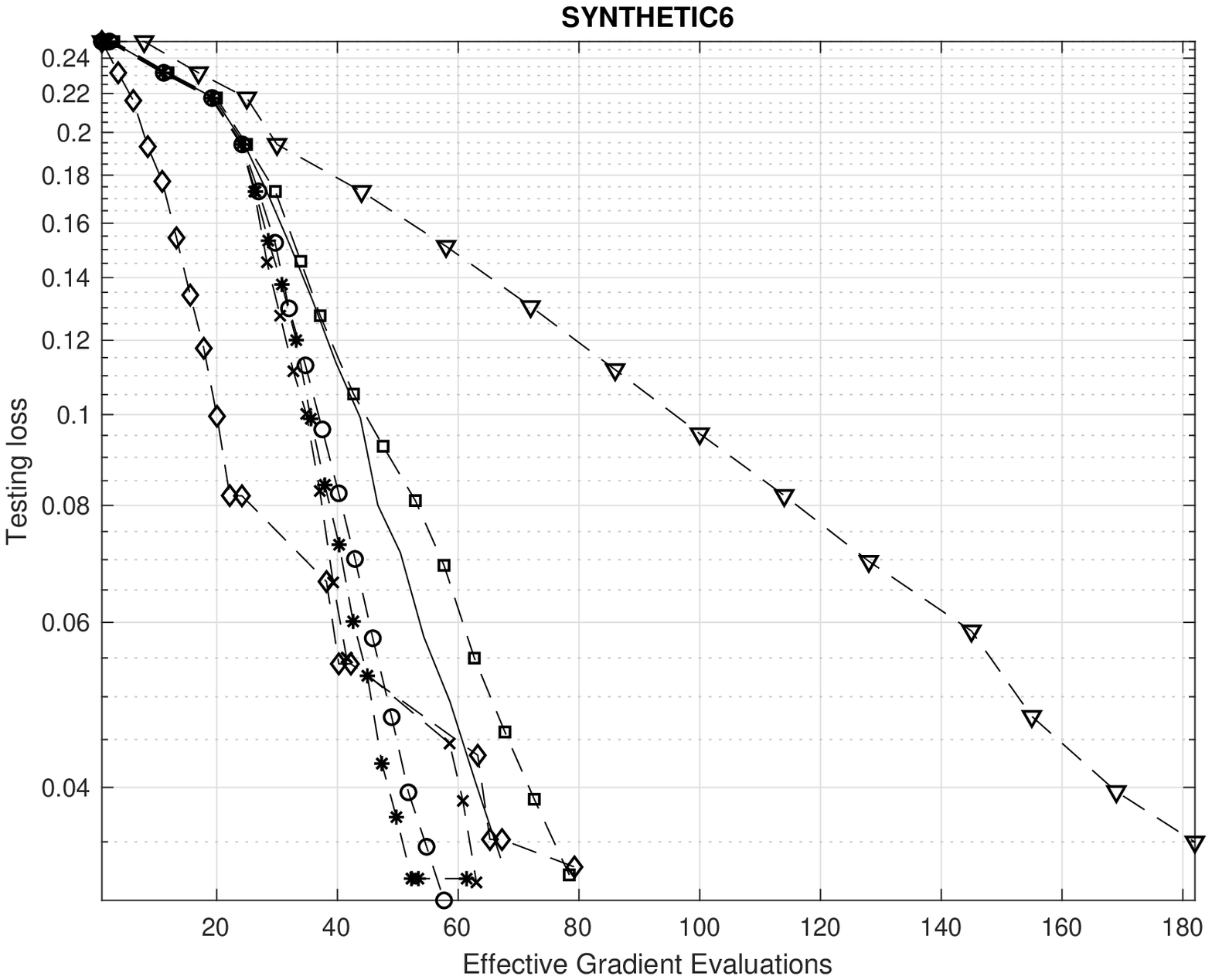}
\caption{Comparison of \textit{ARC-Dynamic} (continuous line), \textit{ARC-Dynamic C} with $C=0.25$ (dashed line with squares), $C=0.5$ (dashed line with circles), $C=0.75$ (dashed line with asterisks), $C=1$ (dashed line with crosses), $C=1.25$ (dashed line with plus symbols), ARC-KL (dashed line with diamonds) and \textit{ARC-Sub} (dashed line with triangles) against EGE. Each row corresponds to a different synthetic dataset.  training loss (left) and testing loss (right) against EGE,  logarithmic scale is on the $y$ axis.}
\label{Perf1KL}
\end{figure}

 Some comments are in order:
\begin{itemize}
\item  Condition \eqref{roosta} in \textit{ARC-Sub} yields a too high sample size at each iteration.  The adaptive strategies \textit{ARC-Dynamic} and  \textit{ARC-KL} outperform \textit{ARC-Sub} as in the latter algorithm  the
cost for computing  the Hessians is  not compensated by the gain in convergence rate.

\item Focusing on the two adaptive strategies \textit{ARC-Dynamic} and \textit{ARC-KL},
Table \ref{TableSynthALL} shows that on average  the former is less expensive than the latter. 
Figure \ref{Perf1KL} shows that \textit{ARC-KL} is  fast in the first stage of the convergence history,  becoming progressively slower as the norm of the step starts changing significantly from an iteration to the other (see Figure \ref{NormStepKL}).
In fact,  the  implementation of \textit{ARC-KL} relies on the assumption that $\|s_{k}\|$ is well approximated by $\|s_{k-1}\|$ and 
this is not always true. In particular,  Figure \ref{NormStepKL} shows that the norm of the step changes slowly initially while in the remaining 
iterations it oscillates and successive values differ by  some orders of magnitude.
This behaviour affects the euclidean norm of the gradient  as shown in Figure \ref{GDsyntehticdata}. We observe that such norm,
depicted against EGE, oscillates in \textit{ARC-KL}, while this is not the case 
in \textit{ARC-Dynamic} and \textit{ARC-Dynamic(C)}.

\item Focusing on our proposed adaptive strategy, Figure \ref{SampleSize} shows that  \textit{ARC-Dynamic} uses sets  $\mathcal{D}_k$ whose cardinality varies  
adaptively through iterations and it is considerably smaller than $N$ in most  iterations.    Moreover, the performance of \textit{ARC-Dynamic} appears to be quite insensitive  to  the choice of scalar $C$.
In fact, computational savings of \textit{ARC-Dynamic}  over  \textit{ARC-Sub} are achieved with various values of  
$C$, including those reported in Table \ref{TableSynth}.

\end{itemize}

\begin{figure}[h]
\centering
\includegraphics[width=%
0.49\textwidth]{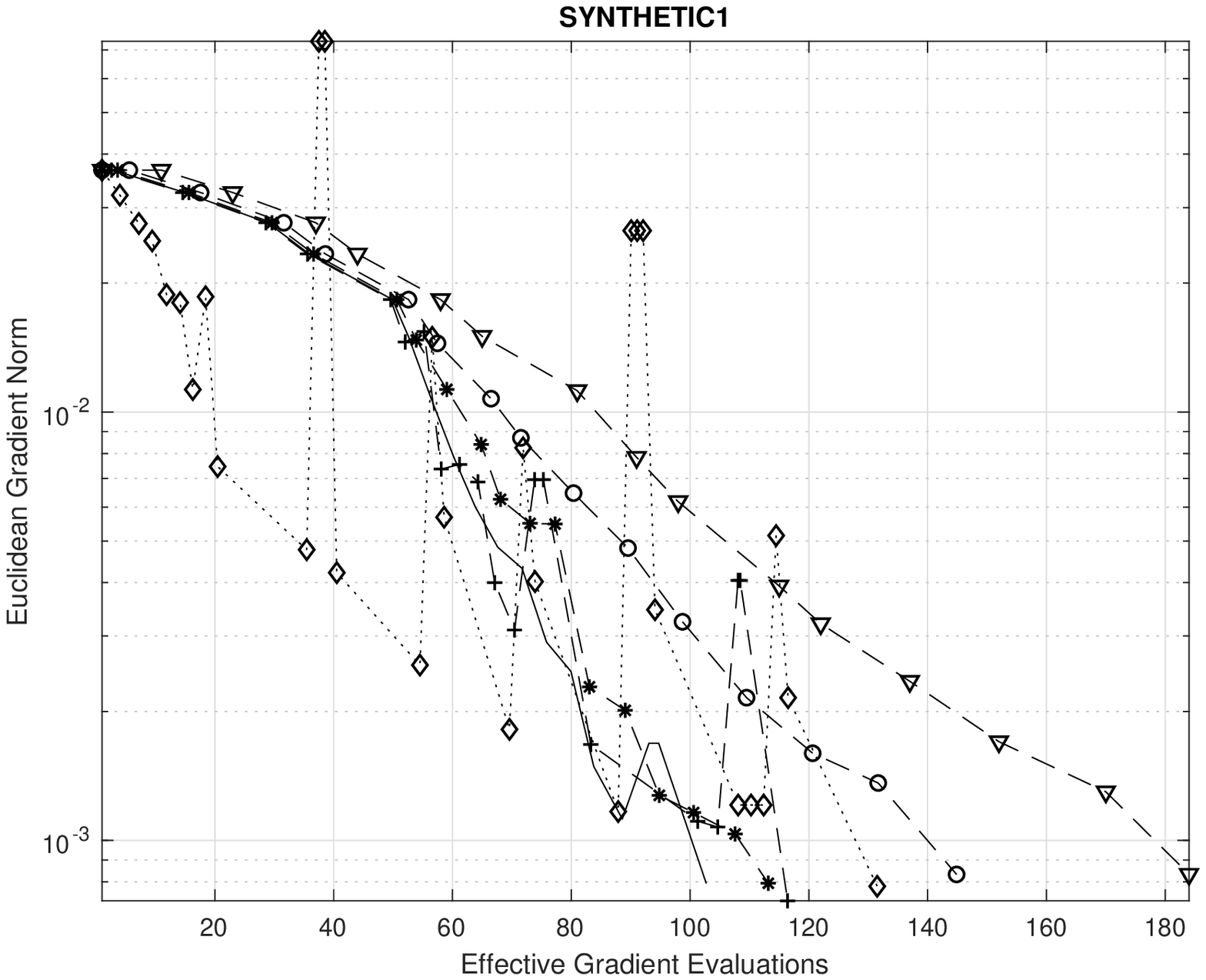}
\includegraphics[width=%
0.49\textwidth]{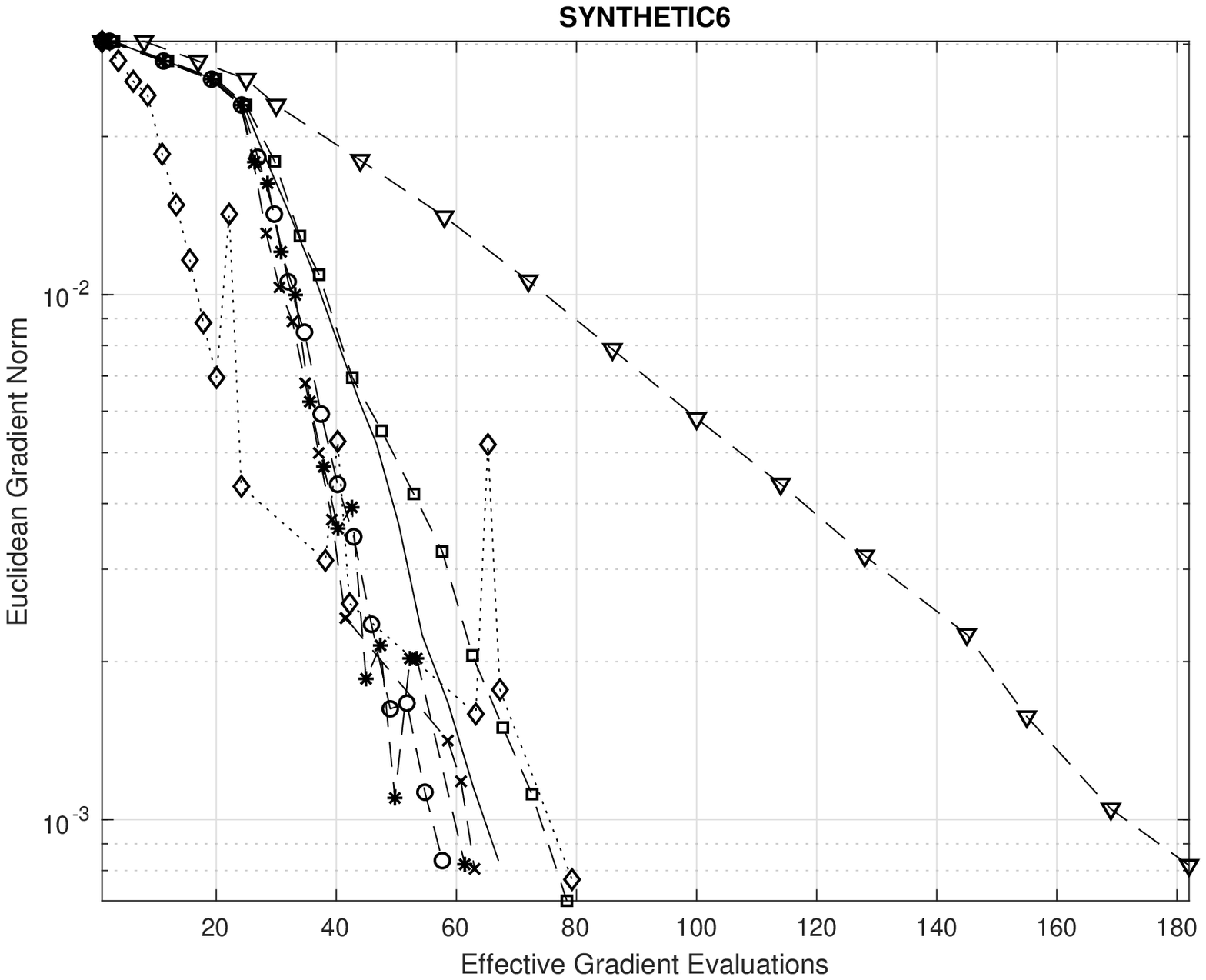}
\caption{Synthetic datasets, euclidean norm of the gradient against EGE (training set), logarithmic scale is on the $y$ axis. \textit{ARC-Dynamic} (continuous line), \textit{ARC-Dynamic C} with $C=0.25$ (dashed line with squares), $C=0.5$ (dashed line with circles), $C=0.75$ (dashed line with asterisks), $C=1$ (dashed line with crosses), $C=1.25$ (dashed line with plus symbols), \textit{ARC-KL} (dot line with diamonds) and  \textit{ARC-Sub} (dashed line with triangles).}
\label{GDsyntehticdata}
\end{figure}

\begin{figure}[h]
\centering
\includegraphics[width=%
0.49\textwidth]{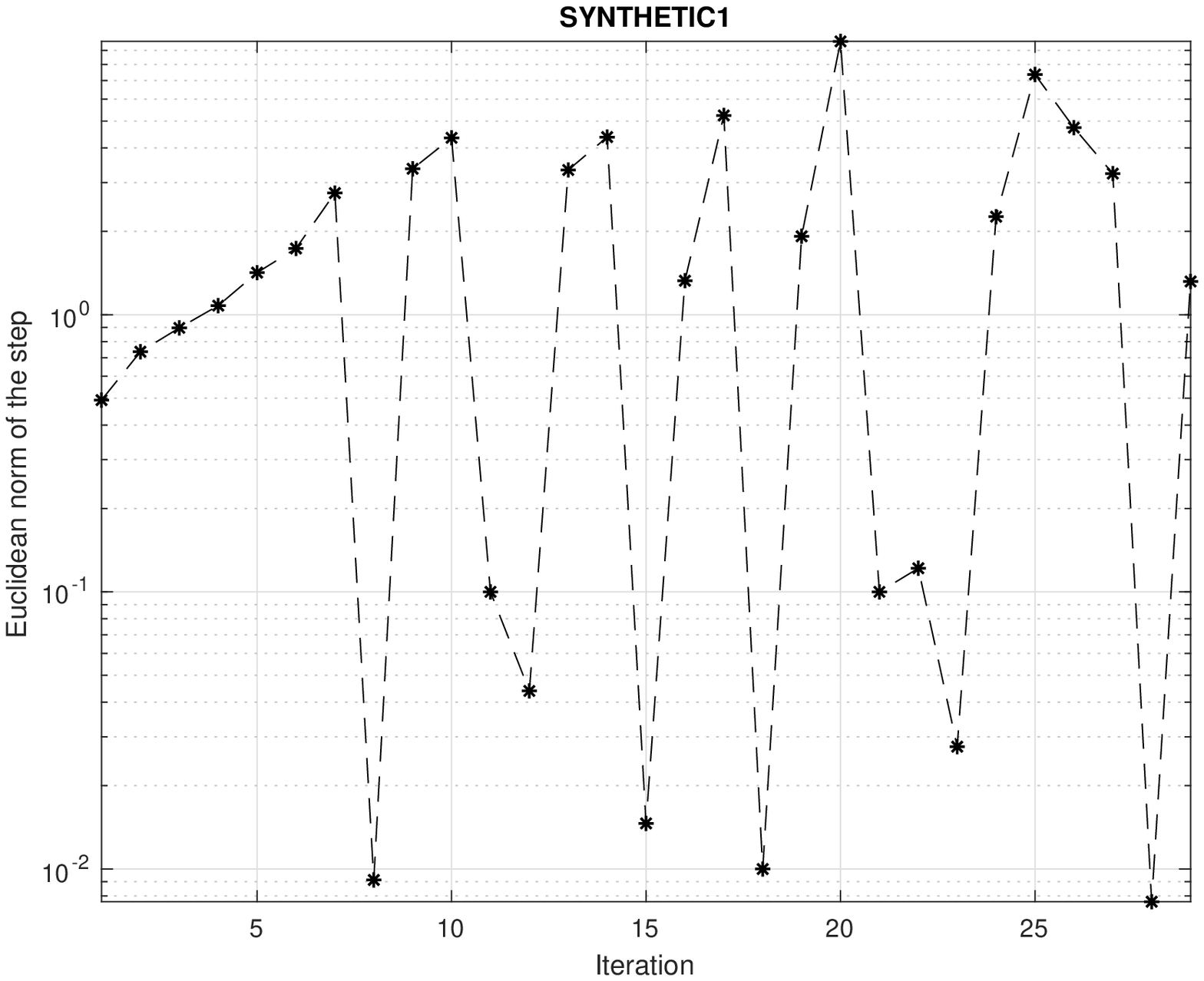}
\includegraphics[width=%
0.49\textwidth]{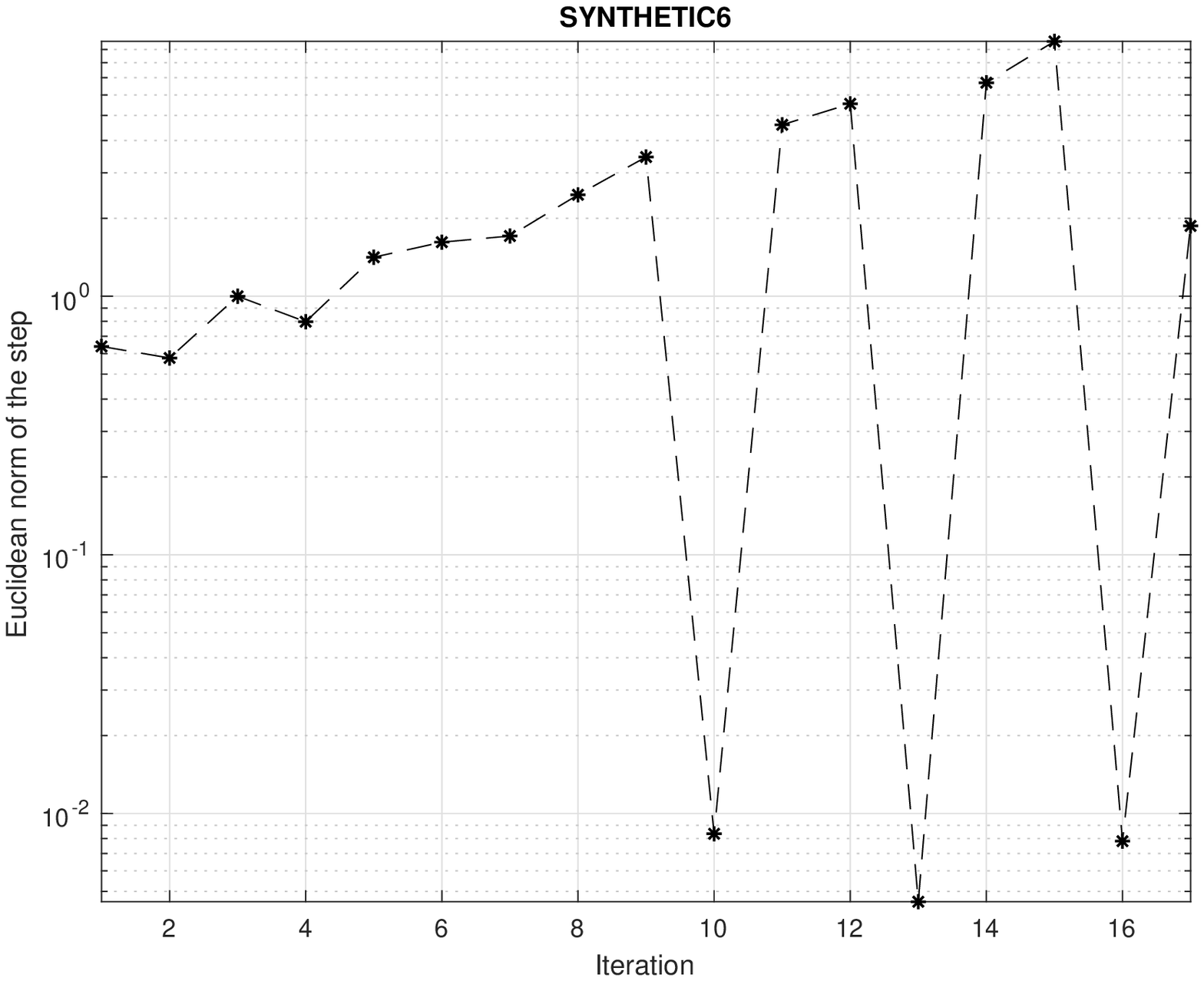}
\caption{Synthetic datasets, 2-norm of the step against iterations via \textit{ARC-KL}. Logarithmic scale on the $y$-axis.}
\label{NormStepKL}
\end{figure}

\begin{small}
\begin{table}[h]   
\begin{center}
\begin{tabular}{l|ccccc}
\toprule
Method & Synthetic1  & Synthetic2 & Synthetic3 & Synthetic4 & Synthetic6 \\\hline
\midrule
\textit{ARC-Dynamic}            & 98.00\%  & 96.80\% & 97.10\%  & 97.85\% & 97.98\%  \\
\textit{ARC-Dynamic(0.25)}   & ---            & ---          & ---            & 98.09\% & 98.08\%  \\ 
\textit{ARC-Dynamic(0.50)}   & 97.60\%  & 96.40\% & 96.90\%  & 98.19\% & 98.23\%  \\
\textit{ARC-Dynamic(0.75)}   & 98.10\%  & 96.60\% & 97.20\%  & 98.02\% & 98.11\%   \\
\textit{ARC-Dynamic(1.00)}   & 97.20\%  & 96.60\% & 96.10\%  & 98.15\% & 97.96\%  \\
\textit{ARC-Dynamic(1.25)}   & 98.00\%  & 96.60\% & 96.90\%  & ---           & ---   \\
\textit{ARC-Sub}                    & 97.50\%  & 96.60\% & 97.00\%  & 98.13\% & 97.87\%  \\
\textit{ARC-KL}                     & 97.80\%  & 96.60\% & 96.70\%  & 98.13\% & 97.98\%  \\
\bottomrule
\end{tabular}
\caption{Synthetic datasets. Binary classification rate on the testing set employed  by \textit{ARC-Dynamic}, \textit{ARC-Dynamic(C)}, $C\in\{0.25,0.5,0.75,1,1.25\}$, \textit{ARC-KL} and \textit{ARC-Sub}, mean values over $20$ runs.}   \label{BinAsyntheticdataset}
\end{center}
\end{table}
\end{small}

\noindent
\begin{figure}[h]
\centering
\includegraphics[width=%
0.49\textwidth]{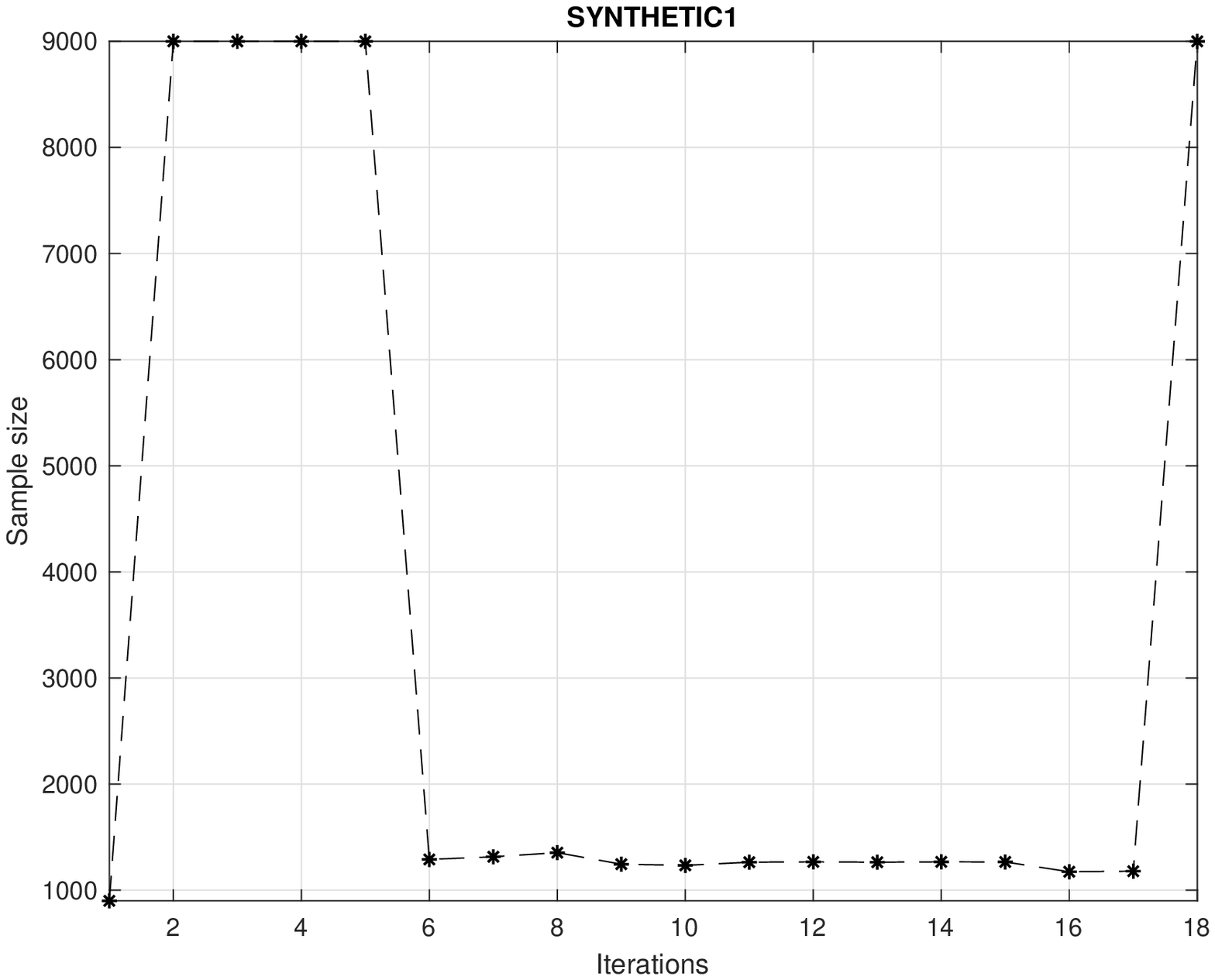}
\includegraphics[width=%
0.49\textwidth]{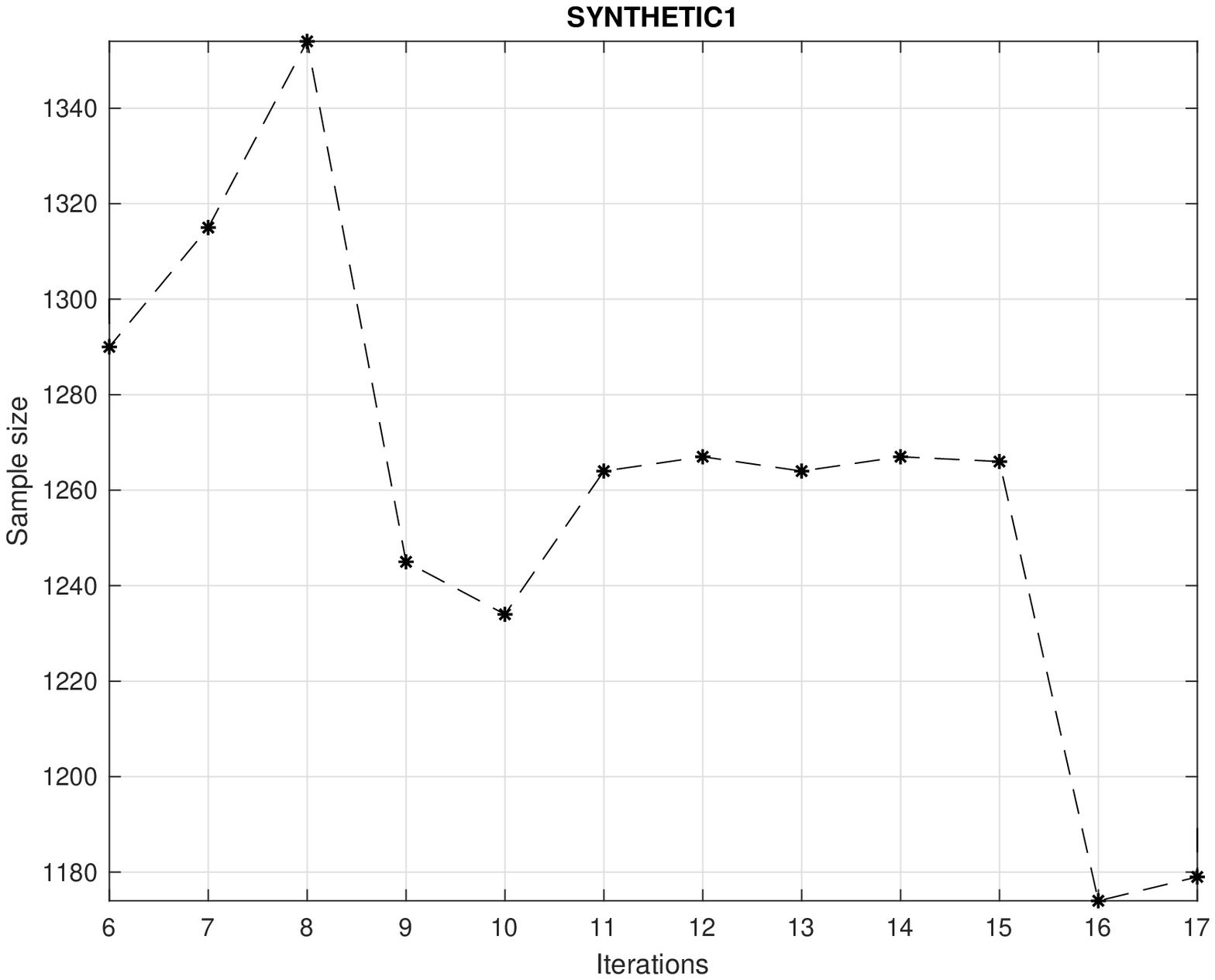}
\includegraphics[width=%
0.49\textwidth]{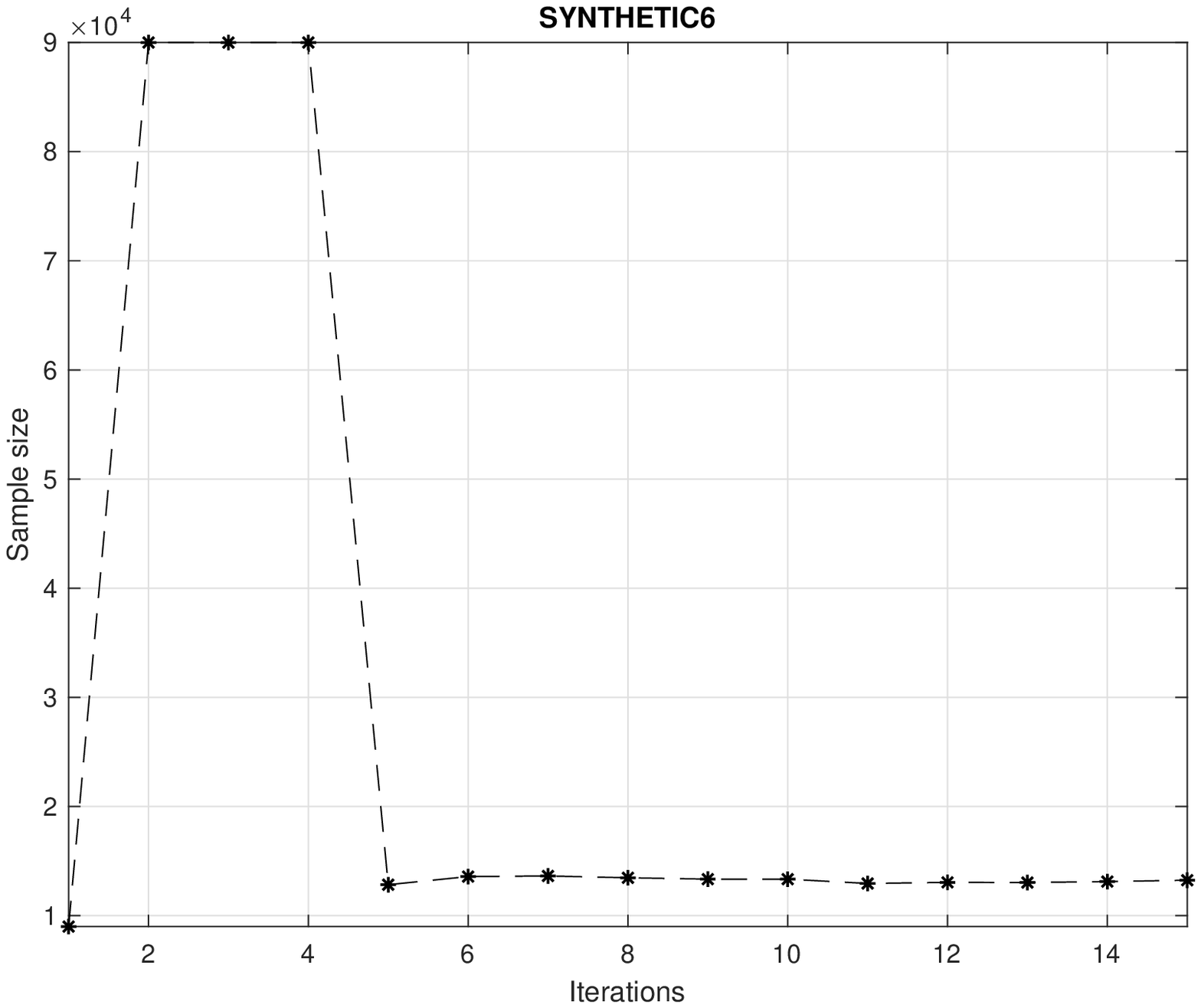}
\includegraphics[width=%
0.49\textwidth]{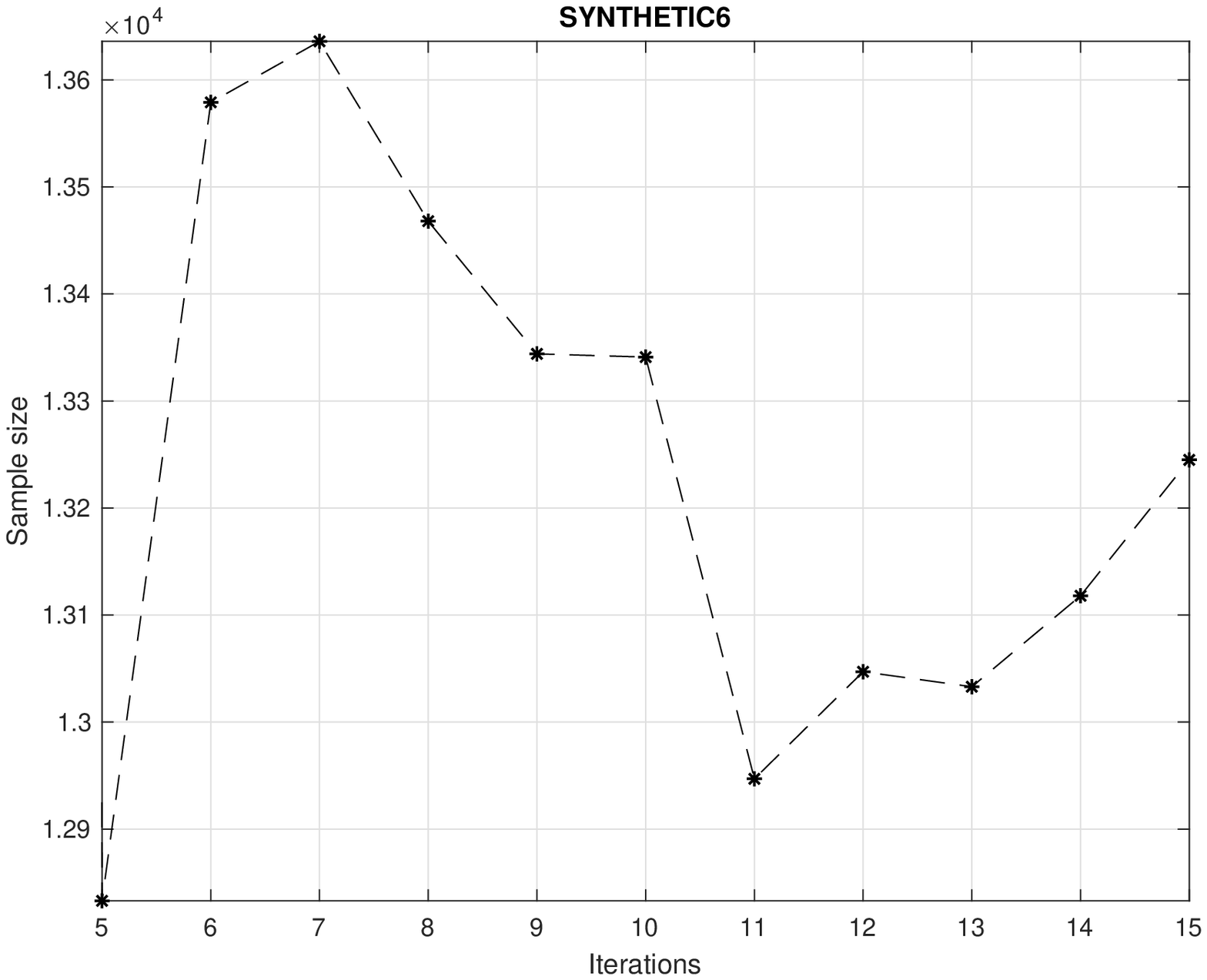}
\caption{Synthetic datasets. Sample size for Hessian approximations against iterations (left).
Portions of figures showing  low sample sizes (right).}
\label{SampleSize}
\end{figure}

The synthetic datasets used provide moderately ill-conditioned problems 
and motivate the use of second order methods.  Indeed, second order methods show their strength since 
all the tested procedures  manage to reduce the norm of the gradient and 
provide a small classification error. This is  shown in Table \ref{BinAsyntheticdataset} where
the average accuracy achieved by methods under comparison is reported. We outline that, the  difference between the percentages reported  in each column and  their  mean value  ranges from $0.20\%$ (best case) to $0,74\%$ (worst case), with an average of $0,38\%$. Thus, all the ARC variants reach a high accuracy in the testing phase and the preferable one is the variant requiring the lowest number of EGE at termination. 

As a final comment, our experiments show that, despite ill-conditioning, 
an accurate approximation of the Hessian is not required and  accuracy  dynamically chosen along iterations  works well in practice.
Adaptive thresholds for the Hessian approximations yield to procedures computationally more convenient than those using constant  and tiny thresholds and do not lack ability in solving the problems.

\subsection{{Real datasets}} \label{real}
In this section we present our second set of numerical results, performed on the machine learning datasets using 
subsampled ARC variants  with  deterministic suboptimal complexity of $O(\epsilon^{-2})$. 
More in depth, we compare our adaptive strategy with the version of ARC considered in \cite{Roosta_2p} where, at each iteration, 
the Hessian is approximated via subsampling on a set with prefixed and small cardinality. 

Our adaptive choice of   $| {\mathcal{D}_k}|$ is implemented   by introducing safeguards in    \eqref{minbound}.
Whenever $\|s_k\|<1$ we choose the cardinality of $\mathcal{D}_k$ 
according the following rule:
\begin{equation}\label{minbound2}
|\mathcal{D}_k|=\max\left \{ 0.05N,\min\left\{0.1N,\left\lceil\frac{4\rho}{C_k}
  \left(\frac{2\rho}{C_k}+\frac{1}{3}\right)
  \,\log\left(\frac{2n}{\overline{\delta}}\right)\right\rceil\right \}\right\},\quad k\ge 0,
\end{equation}
with $\rho>0$. Clearly,  $|\mathcal{D}_k|/N$ varies in the range $[0.05,0.1]$ for all $k\ge 0$, 
allowing us to compare our adaptive strategy with strategies employing fixed small  sample sizes.   The scalar 
$\rho$   is chosen so that $|\mathcal{D}_k|=0.1N$ when $C_k= \alpha (1-\theta) \epsilon^{2/3}$, i.e., 
the value of $C_k$ corresponding to $\|\nabla f(x_k)\|=\epsilon^{2/3}$ and $\|s_k\|<1$.
Whenever $\|s_k\|\ge 1$, the scalar $C$ used in \eqref{bound1}    is fixed so that $|\mathcal{D}_k|=0.05N$.

Guidelines for our rule  are:   sample size $0.05N$ is used when $\|s_k\|\ge 1$, 
larger sample size, up to $0.1N$, is  used  eventually.
Clearly, under this rule the Hessian sample size depends on the ratio $\rho/C_k$.

We compare  \textit{ARC-Dynamic} with the above choice of $|\mathcal{D}_k|$ against   its variant using  Hessian approximations obtained  
by subsampling on a small constant fraction of examples.
We will refer to the latter algorithm as    \textit{ARC-Fix(p)} where  $p\in(0,1)$ is the 
prefixed constant fraction of the $N$ examples used for building the Hessian approximations.

In  Table \ref{TableMLData} we  list the datasets used and 
for  sake of completeness, the  value of  the ratio  $\rho/C$ determining the Hessian sample size whenever $\|s_k\|\ge 1$ is
used. The MNIST dataset is here used for binary classification, 
labelling even digits with $1$ and odd digits with $0$. In the same table, in the column with header $\epsilon$ we report the used stopping tolerance. 
All test problems have been solved with $\epsilon=10^{-3}$
 except for Cina0 and HTRU2, where the tolerance has been increased to $10^{-2}$, since for lower values of $\epsilon$ we had no longer improvements on the decrease of the training and the testing loss, regardless of the method used. By contrast,  Mushroom was solved also using the tighter tolerance $\epsilon=10^{-5}$ as   below  threshold $\epsilon=10^{-3}$ further reduction in training and testing loss  was observed  and the percentage of failures in  classification on the testing set dropped from  1\%  to zero.
This can be observed in Table  \ref{BinAccrealdataset} where we report the average percentage of testing set data correctly classified. We also underline that,  
 the  gap between the percentages reported  in each column and  their  mean value 
 varies from $0\%$ (best case) to $0,89\%$ (worst case), for an average of $0,20\%$. Therefore, the different ARC methods considered achieve a high level of accuracy in the testing phase.
  \begin{small}
\begin{table}[h] 
\begin{center}
\begin{tabular}{l|ccc|c|c}
\toprule
Dataset & Training~$N$ & $d$  & Testing $N_T$ & $\epsilon$  & $\rho/C$\\  \hline
\midrule

Mushroom  \cite{UCI} &   6503 & 112 & 1621 & $10^{-3}$  & 2.3241   \\

 &   & & & $10^{-5}$  & 2.3241 \\
 
HTRU2  \cite{UCI} &   10000 & 8 & 7898 & $10^{-2}$  & 3.6942 \\

Cina0  \cite{CINA}&   10000 & 132 & 6033 & $10^{-2}$ & 2.8671 \\

Gisette  \cite{UCI}&   5000 & 5000 & 1000 & $10^{-3}$ &  1.6182\\

MNIST  \cite{mnist}&   60000 & 784 & 10000 & $10^{-3}$  & 6.3841\\

A9A  \cite{UCI}&   22793 & 123 & 9768 & $10^{-3}$  & 4.3922\\

Ijcnn1  \cite{libsvm}&   49990 & 22 & 91701 & $10^{-3}$ & 7.5283 \\

Reged0 \cite{libsvm} &   400 & 999 & 100 & $10^{-3}$ & 0.4443\\
\bottomrule
\end{tabular}
\caption{Real datasets. Size of the training set (Training $N$), problem dimension ($d$), size of the testing set (Testing $N_T$), tolerance $\epsilon$ for approximate optimality ($\epsilon$) and the ratio $\rho/C$ used for computing sample sizes.}\label{TableMLData}
\end{center}
\end{table}
\end{small}

 \begin{scriptsize}
\begin{table}[h]   
\begin{center}
\begin{tabular}{l|ccccccccc}
\toprule
Method &  \multicolumn{2}{c}{Mushroom} &\hspace*{-12pt} HTRU2 & Cina0 & Gisette & MNIST & A9A & Ijcnn1 & Reged0 \\ 
             & $\epsilon=10^{-3}$ & \hspace*{-5pt} $\epsilon=10^{-5}$ &  & & & & & & \\\hline
\midrule
\textit{ARC-Dynamic}  & 99.38\% &\hspace*{-20pt} 100\% & \hspace*{-20pt} 98.20\%  & 91.88\% & 97.40\%  & 89.92\%  & 84.81\%  & 91.76\%  & 96.00\%\\
\textit{ARC-Fix(0.01) } & 99.07\% & \hspace*{-20pt} 100\% & \hspace*{-20pt} 98.21\%  & 91.80\% & 97.60\%  & 89.84\%  & 84.83\%  & 91.95\%  & 96.00\%\\ 
\textit{ARC-Fix(0.05) } & 98.83\% & \hspace*{-20pt} 100\% & \hspace*{-20pt} 98.19\%  & 91.84\% & 97.50\%  & 89.83\%  & 84.76\%  & 91.75\%  & 96.00\%\\
\textit{ARC-Fix(0.1)}    & 99.32\% & \hspace*{-20pt} 100\% & \hspace*{-20pt} 98.20\%  & 91.88\% & 97.50\%  & 89.77\%  & 84.78\%  & 91.69\%  & 96.00\% \\
\textit{ARC-Fix(0.2)}    & 99.20\% &\hspace*{-20pt} 100\% & \hspace*{-20pt} 98.24\%  & 92.76\% & 97.30\%  & 89.82\%  & 84.83\%  & 91.70\%  & 96.00\% \\
\textit{ARC-Full}          & 98.77\% & \hspace*{-20pt} 100\% & \hspace*{-20pt}  98.27\%  & 93.10\% & 97.50\%  & 89.82\%  & 84.87\%  & 91.67\%  & 96.00\% \\
\bottomrule
\end{tabular}
\caption{Real datasets. Binary classification rate on the testing set employed  by \textit{ARC-Dynamic}, \textit{ARC-Fix(p)}, $p\in\{0.01,0.05,0.1,2\}$ and  \textit{ARC-Full}, mean values over $20$ runs.}  
 \label{BinAccrealdataset}
\end{center}
\end{table}
\end{scriptsize}

In Table  \ref{realdataset} we report, for each considered test problem and for each method under comparison, the  average 
number of EGE performed on 20 runs. 
We compare the performance of \textit{ARC-Dynamic} with that of \textit{ARC-Full} and \textit{ARC-Fix(p)}, $p\in\{0.01,0.05,0.1,0.2\}$.

\begin{small}
\begin{table}[h]   
\begin{center}
\begin{tabular}{l|ccccccccc}
\toprule
Method & \multicolumn{2}{c}{Mushroom}  & \hspace*{-12pt} HTRU2 & Cina0 & Gisette & MNIST & A9A & Ijcnn1 & Reged0 \\ 
             & $\epsilon=10^{-3}$ &  \hspace*{-5pt} $\epsilon=10^{-5}$ &  & & & & & & \\\hline
\midrule
\textit{ARC-Dynamic} & 29.8  & \hspace*{-20pt} 75.3 & \hspace*{-20pt}52.2  & 260.5 & 195.9  & 53.4  & 24.1 & 26.6  & 395.6\\
\textit{ARC-Fix(0.01) } & 41.5  & \hspace*{-20pt} 140.1 & \hspace*{-20pt}87.0 & 405.2 & 397.3 & 136.1 & 37.0 & 28.4 & 600.3\\ 
\textit{ARC-Fix(0.05) } & 35.5 & \hspace*{-20pt} 88.7& \hspace*{-20pt}86.2 & 335.6 & 221.0 & 101.5& 26.2 & 28.7  & 503.2\\
\textit{ARC-Fix(0.1)}  &39.6 & \hspace*{-20pt} 92.1  & \hspace*{-20pt}76.1  & 340.7 &231.0 &72.8 & 28.2 &31.3 &796.3 \\
\textit{ARC-Fix(0.2)}  & 38.1 & \hspace*{-20pt} 110.7 & \hspace*{-20pt}69.1& 453.4 & 268.8 & 73.5 & 34.5  & 36.1 & 1353.5 \\\
\textit{ARC-Full}  & 92.0 & \hspace*{-20pt} 264.0  & \hspace*{-20pt}158.0 & 2300.0 & 836.0 & 173.0 & 87.0 & 78.0 & 6932.0 \\
\bottomrule
\end{tabular}
\caption{Real datasets. Number of EGE employed  by \textit{ARC-Dynamic}, \textit{ARC-Fix(p)}, $p\in\{0.01,0.05,0.1,2\}$ and  \textit{ARC-Full}, mean values over $20$ runs.}   \label{realdataset}
\end{center}
\end{table}
\end{small}

Focusing on  the strategies employing a prefixed sample size, Table \ref{realdataset}  shows that
there is not a clear winner, as their performance depend on the specific dataset. However, all of them are clearly preferable to ARC with full Hessian, confirming that  uniformly sampling the Hessian on a low number of example is enough and there is no point to compute the full Hessian in these applications. On the other hand,  \textit{ARC-Dynamic} always terminates with the lowest number of EGE  and
gains over the most effective runs with   \textit{ARC-Fix(p)}    range  from 
11\% to 27\% in 7 out of 9 test problems and are larger  than 20\% in the solution of HTRU2, Cina0, MNIST, Reged0.   
This is confirmed by the 
 performance profile displayed in  Figure \ref{PerfProfile}. Denoting by $T$ the set of test problems in Table \ref{TableMLData}, by $S=\{$\textit{ARC-Dynamic}, $\{$\textit{ARC-Fix(p)}$\}$$_{p\in\{0.01,0.05,0.1,0.2\}}\}$ the set of the considered methods and by $E_{t,s}$ the number of EGE (at termination) to solve the problem $t\in T$ by the solver $s\in S$, the performance profile  \cite{dolmor2002}  for each $s\in S$ is defined as   the fraction  
$$
\rho_s(\tau)=\frac{1}{ |T|}\left |\Set{t\in T: r_{t,s}=\frac{E_{t,s}}{\min\{ E_{t,s}:s\in S \}}\le \tau }\right|,\qquad \tau\ge 1,
$$
of problems in $T$ solved by the method $s$ with a performance ratio $r_{t,s}$ within a fraction $\tau$ of the best solver. Comparing the values of $\rho_s(1)$, $s\in S$, it can be seen that \textit{ARC-Dynamic} outperforms the other solvers in the solution of all test problems. As already commented, the performances of the \textit{ARC-Fix(p)} methods are instead more controversial. 
More specifically, \textit{ARC-Fix(0.01)} and \textit{ARC-Fix(0.2)} seem to be overall less efficient, even if  \textit{ARC-Fix(0.01)}  is 
within a fraction $\tau = 1.08$ from the best solver on about $11\%$ of the problems. 
\textit{ARC-Fix(0.05)} solved all the problems within  $\tau=1.9$ while,  within such a value of $\tau$,
\textit{ARC-Fix(0.1)} and \textit{ARC-Fix(0.2)} solved $89\%$ of the problems  and  \textit{ARC-Fix(0.01)} solved $78\%$ of the problems. 
Moreover, \textit{ARC-Fix(0.2)} method requires a number of EGE which is within $\tau=3.4$ from the best one to solve all the problems. 
Finally, in  all runs  we observed  that the decreases of the training and   testing loss  with \textit{ARC-Dynamic}
is either comparable or faster than with  \textit{ARC-Fix(p)}. This features is displayed in Figures \ref{Perfmnist-gisette} where the
training and testing loss is plotted versus the number of EGE; representative runs reported concern datasets MNIST and Gisette.
\begin{figure}[h]
\centering
\includegraphics[width=%
0.7\textwidth]{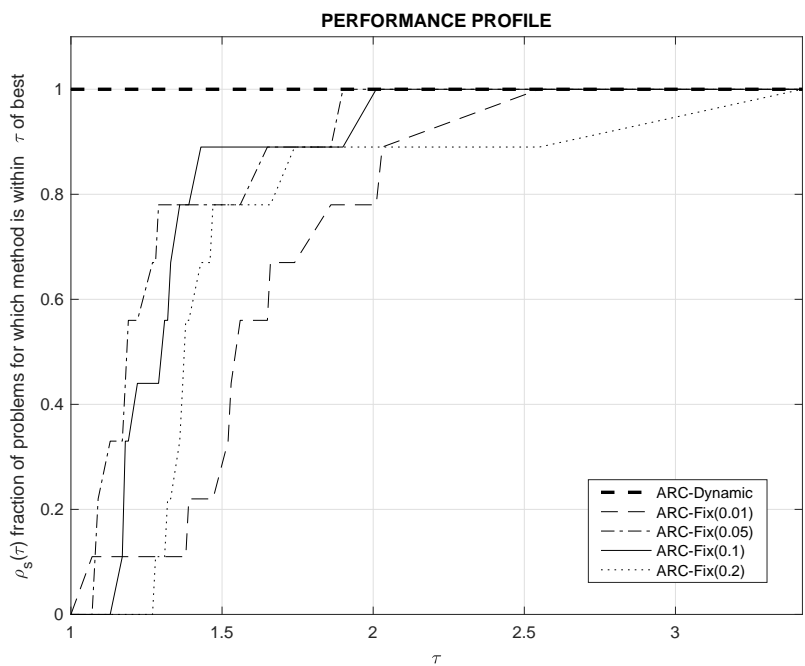}
\caption{Performance profile (EGE count) on $[1,~3.4]$ for real datasets.}
\label{PerfProfile}
\end{figure}
\begin{figure}[h]
\centering
\includegraphics[width=%
0.49\textwidth]{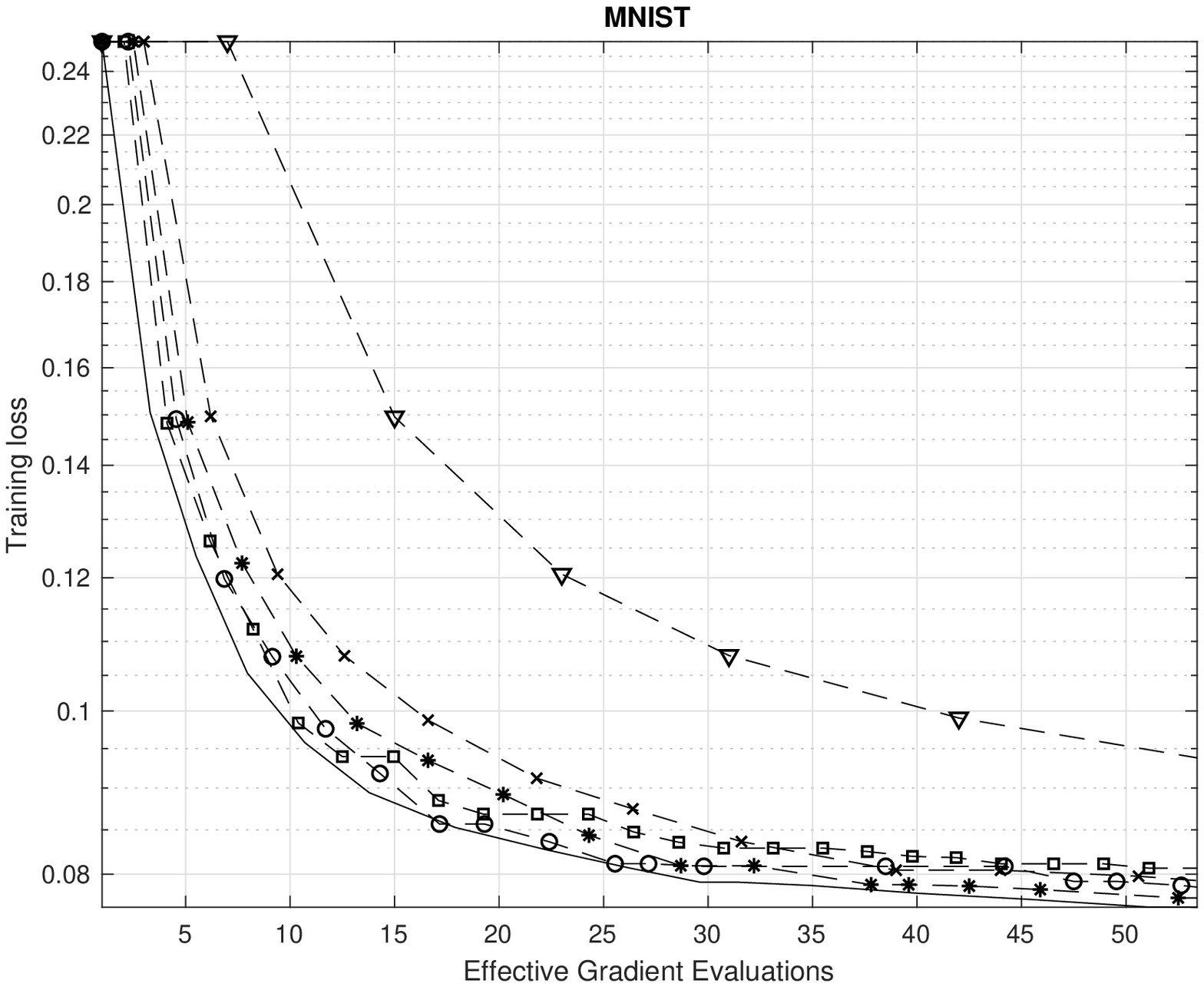}
\includegraphics[width=%
0.49\textwidth]{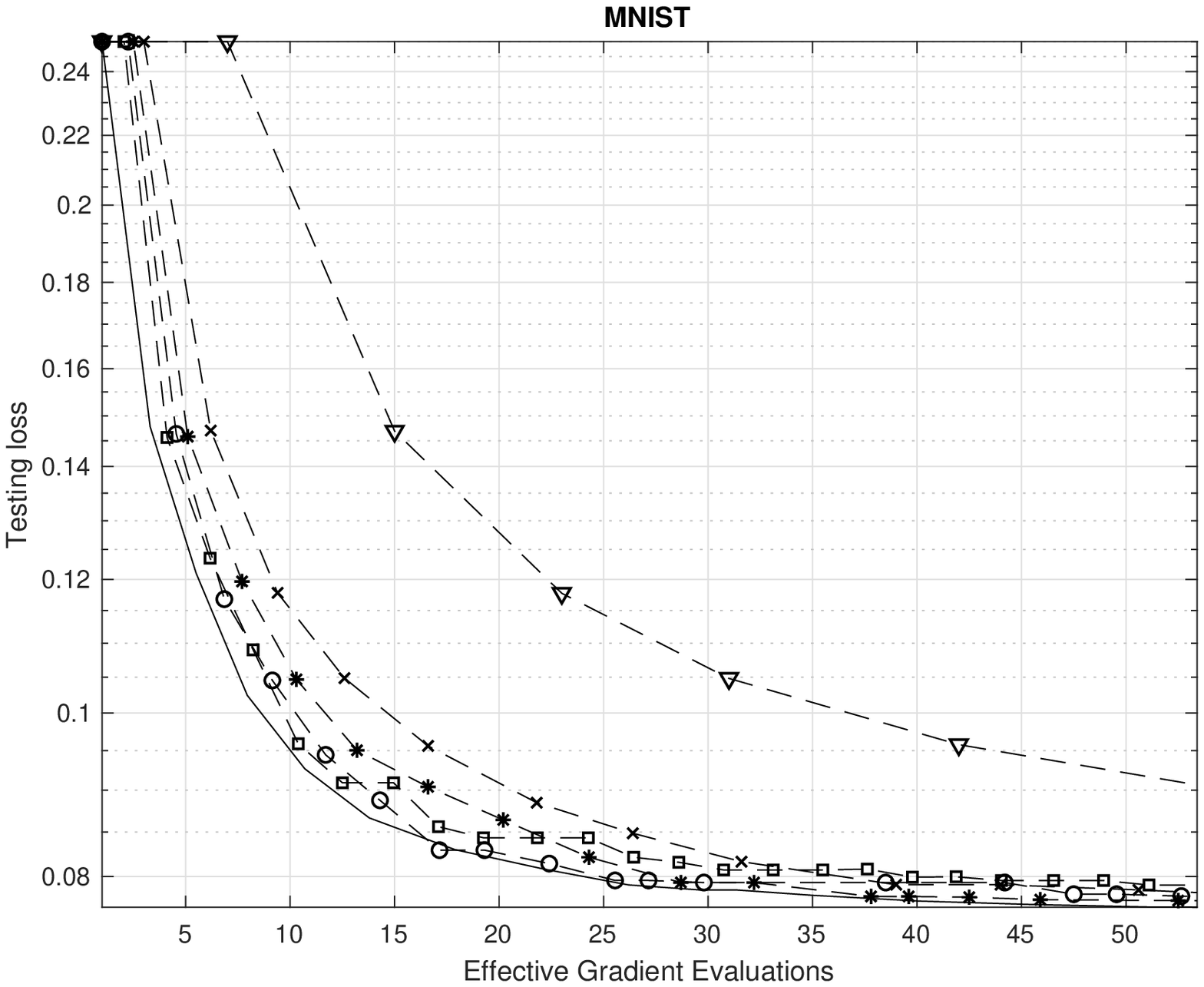}
\includegraphics[width=%
0.49\textwidth]{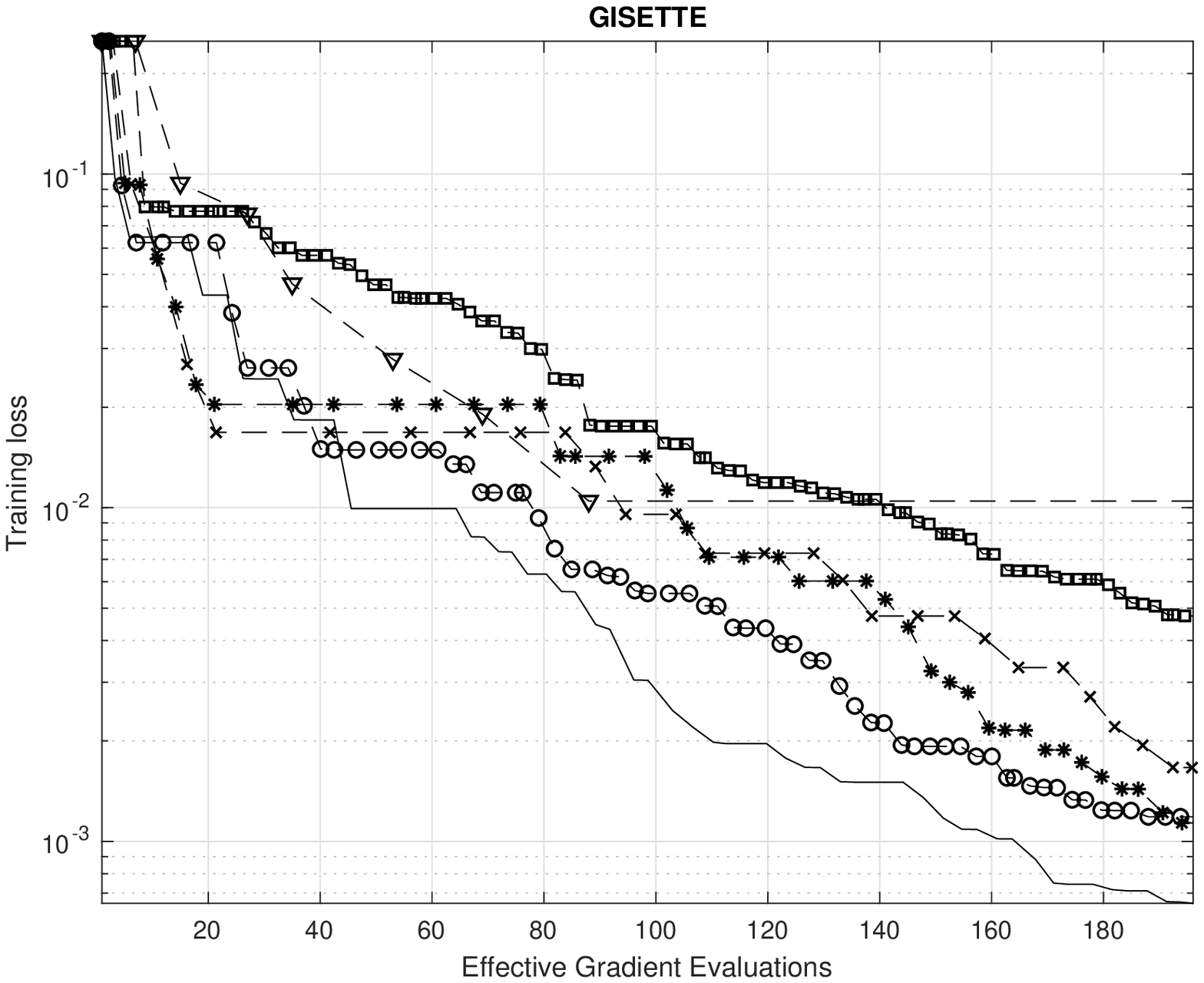}
\includegraphics[width=%
0.49\textwidth]{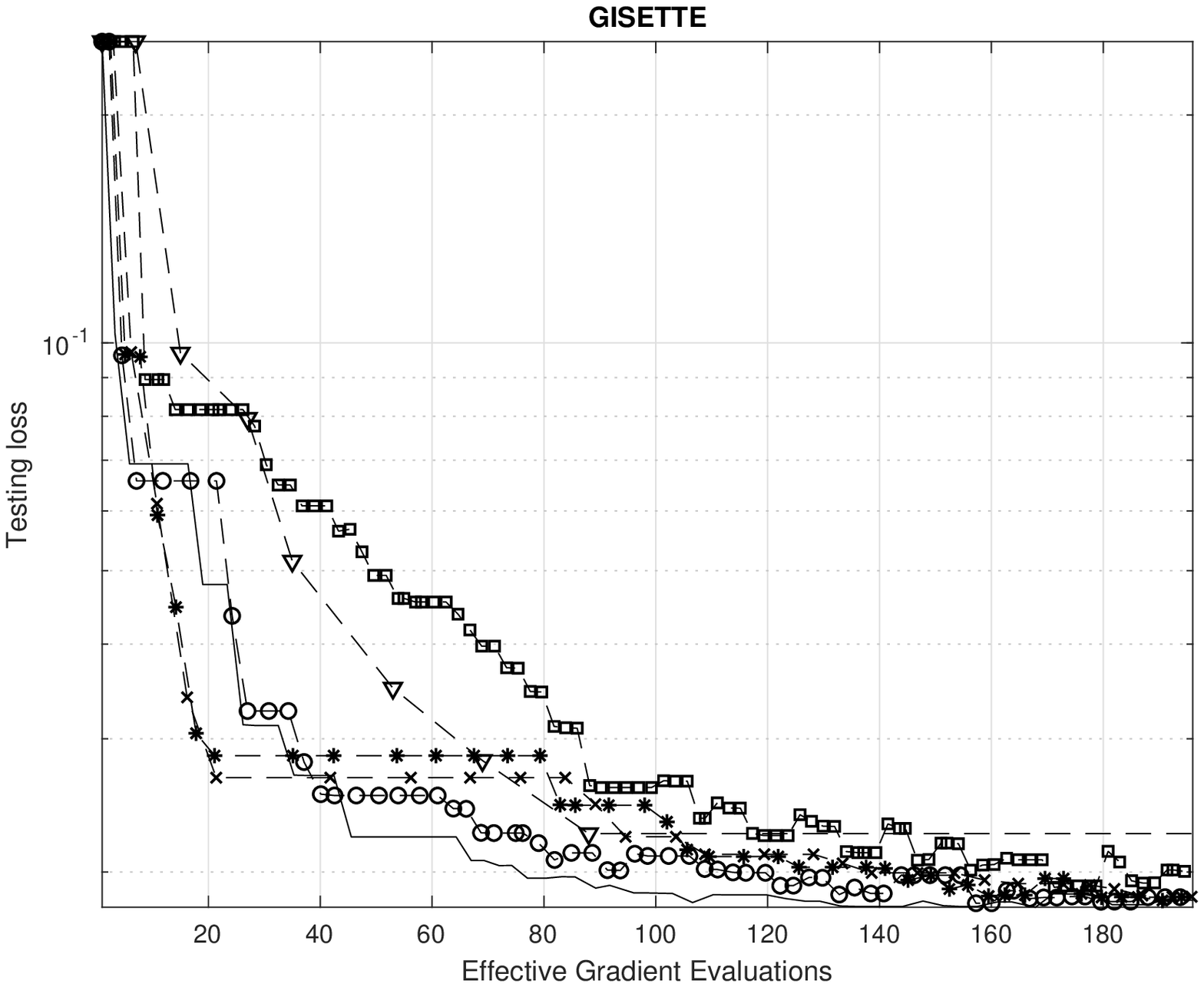}
\caption{MNIST dataset (top), Gisette dataset (bottom), 
training loss (left) and testing loss (right) against EGE,  logarithmic scale is on the $y$ axis.
\textit{ARC-Dynamic} (continuous line), \textit{ARC-Fix(p)} with $p=0.2$ (dashed line with crosses), $p=0.1$ (dashed line with asterisks), $p=0.05$ (dashed line with circles),  $p=0.01$ (dashed line with squares) and \textit{ARC-Full} (dashed line with triangles), }
\label{Perfmnist-gisette}
\end{figure}

%
%

%
%
\section{Conclusions and perspectives}
We proposed an ARC algorithm for solving nonconvex optimization problems based on a dynamic rule
for building inexact Hessian information. The new algorithm maintains the distinguishing features of ARC framework, i.e.,
the  optimal worst-case iteration bound for first- and second-order critical points.
Application to large-scale finite-sum minimization is sketched and analyzed.  

In case of  sums of strictly convex functions the adaptivity allows to improve complexity results in terms of 
component Hessian evaluations  over approaches that do not employ adaptive rules. 

We tested the new algorithm on a large number of problems  and compared its performance with the performance  of 
ARC variants with  optimal complexity  and  the performance  of ARC variants  employing a prefixed small Hessian sample size
and showing suboptimal complexity. The former 
comparison was carried out on synthetic moderately ill-conditioned datasets while 
the latter comparison was carried out on   machine learning datasets from the literature. 
Numerical results highlight that adaptiveness allows to 
reduce the overall computational effort and that the performance of the  proposed  method is quite  problem independent while  strategies taking a prefixed fraction of samples 
require a trial and error procedure to set the most efficient sample size.

 Convergence properties are analyzed both under deterministic and  probabilistic conditions, in the latter case properties of the deterministic algorithm 
are preserved in high probability.  However, this analysis does not give indication on the properties of the method when
the adaptive accuracy requirement is not satisfied. A stochastic analysis, in the spirit of \cite{CS}, would be of interest and it is the topic of future research. Moreover, 
we here assume that the objective function and the gradient are exact.  Extensions of this approach to the case where both function and gradient are evaluated with adaptive accuracy  is desirable as well as the employment of variance reduction techniques.

{
\section{Acknowledgements}
The authors wish to thank Raghu Bollapragada for gently providing the synthetic datasets and the referees for their insightful comments.
}

\end{document}